\newtheorem{theorem}{Theorem}[section]
\newtheorem{lemma}[theorem]{Lemma}
\newtheorem{corollary}[theorem]{Corollary}
\newtheorem{proposition}[theorem]{Proposition}
\newtheorem{fact}[theorem]{Fact}
\theoremstyle{definition}
\newtheorem{definition}[theorem]{Definition}
\newtheorem{example}[theorem]{Example}
\newtheorem{remark}[theorem]{Remark}
\def\seq{\subseteq}
\def\nv{\text{-}}
\def\inv{^{\text{-}1}}
\def\cU{\mathcal{U}}
\def\cF{\mathcal{F}}
\def\cL{\mathcal{L}}
\def\N{\mathbb{N}}
\def\R{\mathbb{R}}
\def\T{\mathbb{T}}
\def\Z{\mathbb{Z}}
\def\C{\mathbb{C}}
\def\Stab{\operatorname{Stab}}
\def\tp{\operatorname{tp}}
\def\GL{\operatorname{GL}}
\def\Th{\operatorname{Th}}
\def\bk{\boldsymbol{k}}
\def\bone{\boldsymbol{1}}
\newcommand{\clqed}{\hfill$\dashv_{\text{\scriptsize{claim}}}$}
\newcommand{\dotminus}{ 
\!\!\buildrel\textstyle~.\over{\hbox{ 
\vrule height3pt depth0pt width0pt}{\smash-} 
}}
\newcommand{\miff}{\makebox[.4in]{$\Leftrightarrow$}}
\newcommand{\mand}{\makebox[.4in]{and}}
\newcommand{\Gen}[2]{\mathcal{M}_{#1}}
\def\U{\textnormal{U}}
\def\T{\textnormal{T}}
   \def\MR#1{}
\title{An analytic version of stable arithmetic regularity}
\date{June 17, 2024}
\author[G. Conant]{Gabriel Conant}
\author[A. Pillay]{Anand Pillay}
\thanks{Partially supported by   NSF grants DMS-1855503, DMS-2204787 (Conant) and DMS-1665035, DMS-1760212, DMS-2054271 (Pillay)}
\address{Department of Mathematics\\
The Ohio State University\\
Columbus, OH 43201\\
 USA}
\email{conant.38@osu.edu}
\address{Department Mathematics\\
University of Notre Dame\\
Notre Dame, IN 46556\\
 USA}
\email{apillay@nd.edu}
\begin{document}

\begin{abstract}
We prove a structure theorem for stable functions on amenable groups, which extends the  arithmetic regularity lemma for stable subsets of finite groups. Given a group $G$, a function $f\colon G\to [\text{-}1,1]$ is called stable if the binary function $f(x\cdot y)$ is stable in the sense of continuous logic. Roughly speaking, our main result says that if $G$ is amenable, then any stable function on $G$ is almost constant on all translates of a unitary Bohr neighborhood in $G$ of bounded complexity. The proof uses ingredients from topological dynamics and continuous model theory. We also prove several  applications  which generalize results  in arithmetic combinatorics to nonabelian groups.
\end{abstract}

\maketitle

\section{Introduction}

In \cite{TeWo}, Terry and Wolf prove an arithmetic regularity lemma for ``stable" subsets of $(\Z/p\Z)^n$. Their result compares to Green's \cite{GreenSLAG} arithmetic regularity lemma (in $(\Z/p\Z)^n$) in direct analogy to how Malliaris and Shelah's \cite{MaSh} regularity lemma for stable graphs   compares to  Szemer\'{e}di's \cite{SzemRL} famous lemma for all finite graphs. 
Shortly after \cite{TeWo}, a non-quantitative generalization to arbitrary  finite groups was  proved by the authors and Terry \cite{CPT}, followed by a quantitative version for finite abelian groups by Terry and Wolf \cite{TeWo2}. A quantitative generalization to all finite groups was later obtained  by the first author \cite{CoQSAR}. 

The main goal of this paper is to extend the study of arithmetic regularity to stable functions  (see Definitions \ref{def:stable} and \ref{def:stableG}). While this task has natural model-theoretic interest due to the connection to continuous logic, we will also combine our main result with the uniform stability of inner products in Hilbert spaces to obtain several applications in arithmetic combinatorics related to Bogolyubov's Lemma and the Croot-Sisask Lemma. Thus our results establish stable arithmetic regularity  as an anchor for several major facets of the interaction between model theory and combinatorics that has developed over the past fifteen years or so. Further details are given at the end of this introduction and in Section \ref{sec:app}.

To motivate the statement of our main result, we  recall Malliaris and Shelah's regularity lemma \cite{MaSh} which, roughly stated, says that if $E\seq V\times V$ is a stable binary relation (with $V$ finite), then there is a partition $\{V_i\}$ of $V$ of efficiently bounded size such that the relation induced by $E$ on each pair $V_i\times V_j$ is almost complete or almost empty. In \cite{CCP}, the authors and Chavarria proved a natural qualitative generalization of this statement for a stable binary function $f\colon V\times V\to [0,1]$,  in which the conclusion is that $f$ is almost constant on almost all of each pair $V_i\times V_j$ (but without an explicit bound on the size of the partition).

For a finite group $G$, the results  from \cite{TeWo,TeWo2,CPT,CoQSAR} can be interpreted as saying that if $A\seq G$ is a stable set, then  the relation on $G\times G$ defined by $xy\in A$ admits a partition as in \cite{MaSh}, but where the pieces are  the  cosets of a normal subgroup  of $G$. Following the analogy, one then might guess that a stable function on a finite group is almost constant on almost all of each coset of some such subgroup. However, this turns out to be false (see Example \ref{ex:ZpZ}) and, as is often the case in arithmetic combinatorics, we will need to abandon the graph-theoretic focus on partitions and replace subgroups with Bohr neighborhoods. The model-theoretic explanation for this is that tame arithmetic regularity results correspond to  ``domination" by a certain compact group $K$ (along the lines of $G/G^{00}$). Arithmetic regularity in terms of coset partitions then arises from situations where $K$ is profinite.
For the case of stable sets, $K$ turns out to be a closed subgroup of a topological semigroup  on a type space, which is of course profinite (in  classical first-order logic). For stable functions, we will  obtain $K$ from a type space in a similar way, but in the context of continuous logic where type spaces need not be profinite.

We now describe  our main result, which will be formulated in the more general setting of (discrete) amenable groups. Recall that a group $G$ is \textbf{amenable} if  there is a left-invariant finitely additive probability measure on the Boolean algebra of subsets of $G$. For brevity, we will simply say \emph{left-invariant measure} when working in this context. We will also need the following  terminology. Let $G$ be a group and fix a function $f\colon G\to \R$. Given $B\seq G$, we say $f$ is \textbf{$\epsilon$-constant on $B$} if $|f(x)-f(y)|<\epsilon$ for all $x,y\in B$. If $G$ is amenable with a fixed left-invariant measure $\mu$,  then we say $f$ is \textbf{$\zeta$-almost $\epsilon$-constant on $B$} if it is $\epsilon$-constant on a set $B'\seq B$ with $\mu(B')\geq \mu(B)-\zeta$.\footnote{Remark \ref{rem:almost} explains why we have not chosen to write $(1-\zeta)\mu(B)$ here instead.} Finally, given a function $\bk\colon \R^+\to \Z^+$, we say that  $f$ is \textbf{$\bk$-stable} if for all $\epsilon>0$, there do not exist $a_1,\ldots,a_{\bk(\epsilon)},b_1,\ldots,b_{\bk(\epsilon)}\in G$ such that $|f(a_ib_j)-f(a_jb_i)|\geq\epsilon$ for all $1\leq i<j\leq\bk(\epsilon)$. 

\begin{theorem}[main result]\label{thm:SARF}
Fix  $\epsilon>0$, $\zeta\colon\R^+\times \N\to\R^+$, and $\bk\colon \R^+\to\Z^+$. Suppose $G$ is an amenable group with left-invariant measure $\mu$, and $f\colon G\to[\nv 1,1]$ is $\bk$-stable. Then there is a $(\delta,\U(n))$-Bohr neighborhood $B$ in $G$, with $\delta\inv,n\leq O_{\bk,\zeta,\epsilon}(1)$, such that $f$ is $\zeta(\delta,n)$-almost $\epsilon$-constant on all translates of $B$. 
\end{theorem}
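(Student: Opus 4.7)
The plan is to adapt the topological-dynamics / stable group theory strategy from the stable set case (e.g.\ in \cite{CPT, CoQSAR}) to the continuous-logic setting of stable functions, with the crucial change that the compact quotient produced from stability is no longer profinite and so must be handled via Peter--Weyl theory rather than by coset partitions.

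First, view $(G,\cdot,f)$ as a continuous-logic structure and pass to a sufficiently saturated elementary extension $(G^{*},f^{*})$. The $\bk$-stability of $f$ translates to the continuous formula $\phi(x,y):=f^{*}(x\cdot y)$ being stable in continuous model theory. The left-invariant measure $\mu$ on $G$ pushes forward (via integration of $\phi$-types) to a $G$-invariant Keisler measure on the $\phi$-type space over $G^{*}$, which by averaging over $G^{*}$ can be taken to be a $G^{*}$-invariant measure $\nu$. By stability of $\phi$, the measure $\nu$ is definable and its left-stabilizer $H\leq G^{*}$ is a type-definable normal subgroup of bounded index. Let $K:=G^{*}/H$ with the logic topology; then $K$ is a compact Hausdorff topological group, but, in contrast to classical stability, $K$ need not be profinite. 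Denote by $\pi\colon G\to G^{*}\to K$ the induced map.

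The analytic heart of the argument is to show that $f^{*}$ factors, up to arbitrarily small $L^1(\nu)$ error, through a continuous function on $K$: for any $\eta>0$, there is a continuous $F\colon K\to[\nv 1,1]$ with $\int|f^{*}-F\circ\pi|\,d\nu<\eta$. Granted this, since $K$ is compact Hausdorff, Peter--Weyl lets me approximate $F$ uniformly by a Lipschitz expression $h\circ\rho$, where $\rho\colon K\to\U(n)$ is a finite-dimensional unitary representation. Composing with $\pi$ gives a homomorphism $\tau\colon G\to\U(n)$, and I take $B$ to be the $(\delta,\U(n))$-Bohr neighborhood defined by $\tau$, with $\delta$ smaller than $\epsilon$ divided by the Lipschitz constant of $h$. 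Then $h\circ\rho\circ\pi$ varies by less than $\epsilon/3$ on each translate of $B$, and combining with the $L^1$ bound via Markov's inequality, for all but a $\zeta(\delta,n)$-measure (in $\mu$) subset of any translate $g_0 B$, $f(g)$ lies within $\epsilon$ of $F(\pi(g_0))$, giving the $\zeta(\delta,n)$-almost $\epsilon$-constancy.

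The uniform bounds $\delta^{-1},n\leq O_{\bk,\zeta,\epsilon}(1)$ follow by a standard compactness / ultraproduct argument: a hypothetical sequence of counterexamples would produce an ultraproduct $(G^{*},f^{*},\mu)$ (with Loeb measure $\mu$) satisfying the hypotheses of the qualitative statement proved above, giving a contradiction. The step I expect to be the main obstacle is the descent of $f^{*}$ to a continuous function on $K$ in $L^1(\nu)$. In classical stability, one uses that stable $\{0,1\}$-valued formulas are essentially boolean combinations of instances of the generic type, yielding an exact coset decomposition; the continuous-logic substitute requires combining definability of $\nu$, the logic topology on $K$, and enough regularity for $f^{*}$ as a function of the $\phi$-type so that the resulting continuous function on $K$ can be fed into Peter--Weyl with controlled Lipschitz modulus. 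Getting this analytic approximation clean enough that the eventual Bohr dimension $n$ and radius $\delta$ depend only on $\bk,\zeta,\epsilon$ is, I expect, where most of the work lies.
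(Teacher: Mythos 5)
Your broad architecture is in the spirit of the paper's proof: pass to a continuous-logic ultraproduct, use stability of $f(x\cdot y)$ to produce a compact Hausdorff group $K$ that is a bounded quotient of the group, compress $K$ into $\U(n)$ via Peter--Weyl, and translate back. However, there are two substantial gaps, and they are precisely the places the paper spends its effort.

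The first gap is the step you flag yourself, namely the ``analytic heart'' of showing that $f^{*}$ factors through a continuous function on $K$ up to small $L^1(\nu)$ error, followed by a Markov-inequality argument. The paper makes this step disappear entirely by changing viewpoint. Rather than taking $K$ to be a quotient $G^{*}/H$ and trying to push a function on $G^{*}$ down to $K$, the paper realizes $K$ as the unique minimal subflow of the Ellis semigroup $S_{\varphi^\sharp}(M)$ of the $G$-flow $S_\varphi(M)$ (Lemma \ref{lem:ESid} and Corollary \ref{cor:EN}, via Ellis--Nerurkar). In that picture, any $\varphi^\sharp$-formula $\theta(x)$, in particular $f(x)$, is by definition a continuous function on the whole type space $S_{\varphi^\sharp}(M)$ and hence restricts to a continuous function on the compact group $K \subseteq S_{\varphi^\sharp}(M)$ with zero further work. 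The crucial measure-theoretic input is then Proposition \ref{prop:domination}: the unique left-invariant Keisler $\varphi^\sharp$-measure agrees with the Haar functional of $K$, so in particular $\mu(K)=1$. This kills the need for any $L^1$ approximation: the ``bad'' set is literally the complement of $K$, which is $\mu$-null, and the oscillation bound on translates of $U\subseteq K$ comes for free from Proposition \ref{prop:continuous} applied to the continuous function $\theta|_K$. Your $L^1$ scheme also has a circularity you would need to resolve: the Peter--Weyl data $(\delta,n)$ depend on $F$, which depends on the accuracy $\eta$, while the Markov error threshold $\zeta(\delta,n)$ is only known after $(\delta,n)$ are chosen. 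With the exact factoring ($\mu(K)=1$) there is no $\eta$ and hence no circularity. Your sketch also quietly identifies $K$ with the stabilizer of the invariant measure and asserts the latter is type-definable of bounded index; the paper obtains $K$ and its identification with $G/G^{00}_{\varphi^\sharp}$ by a separate argument (Theorem \ref{thm:generics}), and the unique ergodicity that makes everything work is the Ellis--Nerurkar theorem, which is not an immediate consequence of stability-and-definability-of-the-measure.

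The second gap is in the transfer from the ultraproduct back to the $G_s$. You describe this as ``a standard compactness / ultraproduct argument,'' but in continuous logic it is not: the homomorphism $\tau\colon G\to\U(n)$ produced model-theoretically is only $\varphi^\sharp$-definable, not given by an ultraproduct of actual homomorphisms $\tau_s\colon G_s\to\U(n)$, so the preimages of identity neighborhoods are not obviously genuine Bohr neighborhoods downstairs. The paper handles this using the two-sorted framework of \cite{CHP} (specifically \cite[Corollary 4.12]{CHP}), which relies on Kazhdan's result on approximate homomorphisms and is where the amenability hypothesis gets used in an essential way. The paper also has to juggle the discrepancy between ultralimits of Keisler functionals versus regular Borel measures (the $Q_\eta$ predicates and Claim 3 in the proof). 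Without this machinery, your final ``standard compactness'' step does not close.
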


In the previous statement, $\U(n)$ denotes the unitary group of degree $n$, and a $(\delta,\U(n)$)-Bohr neighborhood is a homomorphic preimage of the identity neighborhood of radius $\delta$ in $\U(n)$. See Definition \ref{def:Bohr} for details. Note also that the statement involves two error parameters, one of which is itself a \emph{function} of the parameters associated to the unitary Bohr neighborhood $B$. This feature will be needed for the applications of Theorem \ref{thm:SARF} in Section \ref{sec:app}. But for the sake of exposition, we state the following special case, which follows immediately from Theorem \ref{thm:SARF} and Proposition \ref{prop:Bohr}$(a)$ (see also Remark \ref{rem:almost}). 

\begin{corollary}\label{cor:SARF}
Fix  $\epsilon>0$ and $\bk\colon \R^+\to\Z^+$. Suppose $G$ is an amenable group with left-invariant measure $\mu$, and $f\colon G\to[\nv 1,1]$ is $\bk$-stable. Then there is a $(\delta,\U(n))$-Bohr neighborhood $B$ in $G$, with $\delta\inv,n\leq O_{\bk,\epsilon}(1)$, such that $f$ is $\epsilon\mu(B)$-almost $\epsilon$-constant on all translates of $B$. 
\end{corollary}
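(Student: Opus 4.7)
The plan is to derive Corollary \ref{cor:SARF} as a direct specialization of Theorem \ref{thm:SARF}, using Proposition \ref{prop:Bohr}$(a)$ to convert the abstract error function $\zeta(\delta,n)$ into the concrete quantity $\epsilon\mu(B)$. The strategy hinges on the fact that the main theorem allows $\zeta$ to depend on the Bohr parameters $(\delta,n)$, while the cost of invoking $\zeta$ is absorbed into the $O_{\bk,\zeta,\epsilon}(1)$ bound.

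First I would recall (or cite) from Proposition \ref{prop:Bohr}$(a)$ a lower bound of the form $\mu(B)\geq h(\delta,n)$, valid for every $(\delta,\U(n))$-Bohr neighborhood $B$ in any amenable group with left-invariant measure $\mu$, where $h\colon \R^+\times\N\to\R^+$ is an explicit function depending only on $\delta$ and $n$ (presumably something like a volume estimate coming from the Haar measure on $\U(n)$ combined with the homomorphism defining $B$). This lower bound is the crucial input: it is what allows us to \emph{a priori} replace the combinatorially-flexible $\zeta$ by a quantity tied to $\mu(B)$.

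Next, given $\epsilon>0$ and $\bk$ as in the corollary, I would define the auxiliary function $\zeta\colon\R^+\times\N\to\R^+$ by $\zeta(\delta,n) = \epsilon\cdot h(\delta,n)$. Note that $\zeta$ depends only on $\epsilon$ (and on the fixed function $h$), so any quantity of the form $O_{\bk,\zeta,\epsilon}(1)$ is in fact of the form $O_{\bk,\epsilon}(1)$. Now I would apply Theorem \ref{thm:SARF} with this choice of $\zeta$ to obtain a $(\delta,\U(n))$-Bohr neighborhood $B$ with $\delta^{-1},n\leq O_{\bk,\zeta,\epsilon}(1) = O_{\bk,\epsilon}(1)$ such that $f$ is $\zeta(\delta,n)$-almost $\epsilon$-constant on all translates of $B$.

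Finally, I would close the argument by chaining the inequalities: since $\mu(B)\geq h(\delta,n)$ by Proposition \ref{prop:Bohr}$(a)$, we have $\zeta(\delta,n) = \epsilon\cdot h(\delta,n)\leq \epsilon\mu(B)$, and therefore $f$ is $\epsilon\mu(B)$-almost $\epsilon$-constant on every translate of $B$ (trivially, being $\zeta$-almost $\epsilon$-constant implies being $\zeta'$-almost $\epsilon$-constant for any $\zeta'\geq \zeta$, since the exceptional set only shrinks). This is exactly the statement of Corollary \ref{cor:SARF}. There is essentially no obstacle here beyond correctly invoking Proposition \ref{prop:Bohr}$(a)$; the only subtlety is being careful that the choice of $\zeta$ does not secretly depend on $B$ (it does not, because $h$ is a fixed function of $(\delta,n)$) and that the parameter dependence in the big-$O$ reduces as claimed.
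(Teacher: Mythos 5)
Your proof is correct and is essentially the same as the paper's intended argument: the paper states that Corollary \ref{cor:SARF} ``follows immediately from Theorem \ref{thm:SARF} and Proposition \ref{prop:Bohr}$(a)$ (see also Remark \ref{rem:almost})'', and your choice $\zeta(\delta,n)=\epsilon\,h(\delta,n)$ with $h(\delta,n)=(\delta/c)^{n^2}$ (the measure lower bound coming from the covering number in Proposition \ref{prop:Bohr}$(a)$) is exactly the intended specialization, with the observation that $\zeta$ depends only on $\epsilon$ handling the parameter dependence in the big-$O$.
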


 Roughly speaking, Theorem \ref{thm:SARF} says that a stable function on an amenable group behaves similarly to a continuous function on a compact group. Since continuous  functions are  canonical examples of stable functions, one can thus interpret Theorem \ref{thm:SARF} as a ``rigidity theorem" in the sense of \cite{TaoICM}. See Section \ref{sec:preG} for further discussion along these lines.

To prove Theorem \ref{thm:SARF}, we will first obtain a corresponding model-theoretic result in continuous logic, and then apply an ultraproduct argument. The model-theoretic companion result is given in Section \ref{sec:mainMT} (see Theorem \ref{thm:mainUP}), and concerns a metric structure $M$ with a sort $G$ for a group, along with a left-invariant stable formula $\varphi(x,z)$ (with $x$ of sort $G$). The key consequence of stability  is weak almost periodicity of the type space $S_\varphi(M)$ as a $G(M)$-flow. By work of Ellis and Nerurkar \cite{EllNer} in topological dynamics, it follows that the Ellis semigroup of $S_\varphi(M)$ has a unique minimal subflow $K$, which is a compact Hausdorff group. In Lemma \ref{lem:ESid}, we provide  a precise description of the Ellis semigroup of $S_\varphi(M)$ as a certain type space, which then allows us to view $K$ as a definable compactification of $G$. This also opens the route to  the main model-theoretic result (Theorem \ref{thm:mainUP}), which says roughly that any $\varphi$-formula $\theta(x)$ is almost constant  on all translates of a preimage of an identity neighborhood in $K$. Finally, we use the Peter-Weyl Theorem to replace $K$ with a unitary group (see Corollary \ref{cor:mainUP}).  All of this sets the stage for proving Theorem \ref{thm:SARF} with ultraproducts, although in this case the transfer argument is not nearly as straightforward as similar situations in classical logic (e.g., \cite{CPT}). For example, we will use recent work with Hrushovski \cite{CHP} to facilitate the use of Bohr neighborhoods in ultraproducts. This step of the proof represents a key use of the amenability assumption through a result of Kazhdan \cite{Kazh} on approximate homomorphisms.  See the start of Section \ref{sec:main} for further discussion. 

After the proof of Theorem \ref{thm:SARF}, we  state some refinements in the special cases of groups of bounded exponent, abelian groups, and torsion groups, where unitary Bohr neighborhoods can be replaced by subgroups and/or Bohr neighborhoods defined from a finite-dimensional  torus (see Corollaries  \ref{cor:abelian}, \ref{cor:exponent}, and \ref{cor:torsion}). Then in Section \ref{sec:app} we will turn to   applications of Theorem \ref{thm:SARF} which exploit  the fact that  the continuous theory of Hilbert spaces is stable. In particular, this fact implies that any function on an amenable group obtained as a convolution of two functions is $\bk$-stable for some absolute  $\bk$ (see Definition \ref{def:conv} and  Corollary \ref{cor:conv-stable}). Combined with Theorem \ref{thm:SARF}, we have the following conclusion.

\begin{corollary}\label{cor:conv-regularity}
Fix  $\epsilon>0$ and $\zeta\colon\R^+\times \N\to\R^+$. Suppose $G$ is an amenable group with left-invariant measure $\mu$, and $f,g\colon G\to[\nv 1,1]$ are arbitrary functions. Then there is a $(\delta,\U(n))$-Bohr neighborhood $B$ in $G$, with $\delta\inv,n\leq O_{\zeta,\epsilon}(1)$, such that $f\ast g$ is $\zeta(\delta,n)$-almost $\epsilon$-constant on all translates of $B$. 
\end{corollary}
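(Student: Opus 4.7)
The plan is to deduce Corollary \ref{cor:conv-regularity} directly from Theorem \ref{thm:SARF} applied to the function $f\ast g$, invoking Corollary \ref{cor:conv-stable} to supply the stability hypothesis. The crucial point is that Corollary \ref{cor:conv-stable} provides an \emph{absolute} stability bound $\bk\colon\R^+\to\Z^+$, depending only on the uniform bound $\|f\|_\infty,\|g\|_\infty\leq 1$ and in particular independent of $G$, $\mu$, $f$, and $g$ themselves.

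The reason convolutions are uniformly stable is that, after a suitable change of variables using left-invariance of $\mu$, one can write
\[
(f\ast g)(xy) = \int f(xw)\,g(w\inv y)\,d\mu(w) = \langle \phi(x),\psi(y)\rangle_{L^2(G,\mu)},
\]
where the vectors $\phi(x),\psi(y)\in L^2(G,\mu)$ defined by $\phi(x)(w)=f(xw)$ and $\psi(y)(w)=g(w\inv y)$ both have $L^2$-norm at most $1$. Since the inner product on a Hilbert space is uniformly stable in the sense of continuous logic (a form of Grothendieck's double-limit theorem), this factorization forces $(x,y)\mapsto(f\ast g)(xy)$ to be $\bk$-stable with $\bk(\epsilon)$ depending only on $\epsilon$. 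This is exactly the content of Corollary \ref{cor:conv-stable}, which I would simply quote.

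With this in hand, the final step is routine: apply Theorem \ref{thm:SARF} to the function $f\ast g\colon G\to[\nv 1,1]$ with the absolute $\bk$ from the previous step and the prescribed $\zeta,\epsilon$. This produces a $(\delta,\U(n))$-Bohr neighborhood $B$ with $\delta\inv,n\leq O_{\bk,\zeta,\epsilon}(1)$ on each translate of which $f\ast g$ is $\zeta(\delta,n)$-almost $\epsilon$-constant. Because $\bk$ is an absolute function of $\epsilon$, its contribution to the bound is subsumed into the $\epsilon$-dependence, leaving $\delta\inv,n\leq O_{\zeta,\epsilon}(1)$ as required.

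There is no real obstacle at this stage: once Theorem \ref{thm:SARF} and Corollary \ref{cor:conv-stable} are in place, Corollary \ref{cor:conv-regularity} is a one-line consequence. All of the substantive work lies upstream, in the topological-dynamics and ultraproduct argument supporting Theorem \ref{thm:SARF} and in the Hilbert-space stability estimate that underlies Corollary \ref{cor:conv-stable}.
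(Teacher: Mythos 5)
Your proposal is correct and matches the paper's own (implicit) proof exactly: the paper presents Corollary \ref{cor:conv-regularity} as an immediate consequence of combining Corollary \ref{cor:conv-stable} (absolute stability of convolutions, via the Hilbert-space inner product) with Theorem \ref{thm:SARF}. Your parenthetical re-derivation of the factorization $(f\ast g)(xy)=\langle\phi(x),\psi(y)\rangle$, including the change of variables by left-invariance, is also the same computation the paper uses to prove Corollary \ref{cor:conv-stable}.
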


In Section \ref{sec:app}, we use this  to give a short proof of Bogolyubov's Lemma for amenable groups (proved recently in \cite{CHP}), a ``two-set" generalization of Bogolyubov for finite groups (which connects to work of Gowers \cite{GowQRG} on quasirandom groups), and a non-quantitative version of the Croot-Sisask Lemma for amenable groups. 

Finally, in Section \ref{sec:more}, we will return to the general model-theoretic setting of Section \ref{sec:mainMT}, and prove some further results about generic types, stabilizers, and connected components. For the most part, these will be natural generalizations of known results in classical logic (e.g., from \cite{CPT} and \cite{CoLSGT}), although we will make note of some crucial differences. 

\subsection*{Acknowledgements} We thank Caroline Terry for helpful discussions and Julia Wolf for several invaluable comments on an earlier draft.

\section{Preliminaries}\label{sec:prelim}

\subsection{Notation}
Given $r,s\in\R$ and $\epsilon>0$, we write $r\approx_\epsilon s$ to denote $|r-s|\leq \epsilon$.

\subsection{Bohr neighborhoods}

\begin{definition}\label{def:Bohr}
Let $K$ be a group equipped with a metric. Fix a (discrete) group $G$ and a real number $\delta>0$ with $\delta\leq\textnormal{diam}(K)$. Then a \textbf{$(\delta,K)$-Bohr neighborhood in $G$} is a set of the form $\tau\inv(U)$, where $\tau\colon G\to K$ is a group homomorphism and $U$ is the open identity neighborhood in $K$ of radius $\delta$.
\end{definition}

In the above context, $K$ is typically a compact group metrizable by the chosen metric. We will focus on the following cases:
\begin{enumerate}[\hspace{5pt}$\ast$]
\item $K$ is  the  unitary group $\U(n)$ of degree $n$ with the metric induced by the operator norm on $\textnormal{GL}_n(\C)$, or
\item $K$ is the real $n$-dimensional torus $\T(n)$ with the metric induced from $\U(n)$.
\end{enumerate}
In the second case, we view $\T(n)$ as the maximal torus consisting of diagonal matrices. Thus the metric on $\T(n)$ is the Cartesian product of the complex distance metric on the unit circle. 

Note that the above metric  on $\U(n)$ is bi-invariant. Consequently, if $B$ is a $(\delta,\U(n))$-Bohr neighborhood in a group $G$, then $B=B\inv$ and $gB=Bg$ for any $g\in G$. 
The following additional  facts  are proved in \cite[Section 5]{CHP}.

\begin{proposition}\label{prop:Bohr}
Let $B$ be a $(\delta,\U(n))$-Bohr neighborhood in a group $G$.
\begin{enumerate}[$(a)$]
\item $G$ can be covered by $(c/\delta)^{n^2}$ translates of $B$, where $c>0$ is an absolute constant.
\item If $G$ is abelian then $B$ is a $(\delta,\T(n))$-Bohr neighborhood.
\item If $G$ has exponent $r$ and $\delta\leq O_r(1)$, then $B$ is a normal subgroup of $G$.
\end{enumerate}
\end{proposition}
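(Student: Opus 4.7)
The plan is to handle the three parts separately, using throughout that the operator-norm metric on $\U(n)$ is bi-invariant, so that left translations, right translations, and conjugations by unitaries are isometries. Write $U$ for the open $\delta$-ball around the identity $I$ in $\U(n)$, so $B=\tau\inv(U)$ for some homomorphism $\tau\colon G\to\U(n)$.

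For part $(a)$, I would use a Haar-measure packing argument. Choose a maximal $\delta/2$-separated subset $S\subseteq \tau(G)$, so the open balls of radius $\delta/4$ around the elements of $S$ are pairwise disjoint and, by left-invariance of the metric, are isometric translates of the $\delta/4$-ball at $I$. Since $\U(n)$ is a compact real Lie group of dimension $n^2$, normalized Haar measure of this ball is bounded below by $c_0^{n^2}\delta^{n^2}$ for an absolute $c_0>0$, giving $|S|\leq (c/\delta)^{n^2}$. By maximality every element of $\tau(G)$ lies within $\delta/2$ of some $s\in S$, hence $\tau(G)\subseteq\bigcup_{s\in S}sU$; lifting each $s$ to a preimage $g_s\in G$ then yields the cover $G=\bigcup_{s\in S}g_sB$.

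For part $(b)$, I would invoke the classical structure theorem that every compact abelian subgroup of $\U(n)$ is contained in a maximal torus, and that all maximal tori are conjugate to the diagonal torus $\T(n)$. Since $\tau(G)$ is abelian so is its closure, and we may pick $c\in\U(n)$ with $c\inv\overline{\tau(G)}c\subseteq\T(n)$. The conjugated homomorphism $\tau'\colon G\to\T(n)$ defined by $\tau'(g):=c\inv\tau(g)c$ then satisfies $\|\tau'(g)-I\|=\|\tau(g)-I\|$ by conjugation-invariance of the operator norm, so $B=(\tau')\inv(U\cap\T(n))$ exhibits $B$ as a $(\delta,\T(n))$-Bohr neighborhood.

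For part $(c)$, I would exploit torsion rigidity in $\U(n)$. If $G$ has exponent $r$ then every $A\in\tau(G)$ satisfies $A^r=I$, so $A$ is unitarily diagonalizable with eigenvalues among the $r$-th roots of unity; since $A-I$ is normal, $\|A-I\|=\max_\lambda|\lambda-1|$. The nontrivial $r$-th roots of unity lie at distance at least $2\sin(\pi/r)$ from $1$, so as soon as $\delta<2\sin(\pi/r)$, every $A\in\tau(G)\cap U$ has all eigenvalues equal to $1$ and is therefore $I$. Hence $B=\ker(\tau)$, which is normal in $G$. The only point at which any real calculation is required is the lower bound on Haar measure of small balls in $\U(n)$ used in $(a)$; this is standard (exponential map plus a Jacobian estimate) but somewhat tedious, and in practice I would cite a reference, such as Szarek's bounds on the metric entropy of classical Lie groups, rather than rederive it.
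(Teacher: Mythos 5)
The paper itself does not prove Proposition~\ref{prop:Bohr}; it simply cites \cite[Section~5]{CHP}. Your blind proof is therefore a self-contained reconstruction, and all three parts are correct. The three ingredients you use --- a Haar-measure packing bound on $\U(n)$ exploiting that it is a compact Lie group of real dimension $n^2$, conjugation of the commuting image $\tau(G)$ into the diagonal torus together with conjugation-invariance of the operator norm so that the $\delta$-ball pulls back correctly, and the eigenvalue gap $2\sin(\pi/r)$ for nontrivial $r$-th roots of unity forcing $\tau\inv(U)=\ker\tau$ once $\delta$ is small --- are the standard ones and are in the same spirit as the arguments in \cite{CHP}. Two small comments. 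In part $(b)$ the appeal to maximal tori can be replaced by the more elementary observation that a commuting family of unitaries is simultaneously unitarily diagonalizable, which yields the conjugating matrix $c$ directly without Lie-theoretic input. In part $(a)$ the only nontrivial input is that the normalized Haar measure of an operator-norm $\rho$-ball in $\U(n)$ is bounded below by $(c_0\rho)^{n^2}$ with $c_0$ an \emph{absolute} constant, uniform in $n$; you correctly flag this and the reference to Szarek's metric-entropy estimates for classical groups is the right one, since those give matching two-sided bounds $(c/\rho)^{n^2}\le N(\U(n),\|\cdot\|_{\mathrm{op}},\rho)\le (C/\rho)^{n^2}$ with absolute $c,C$.
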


\begin{remark}\label{rem:almost}
Suppose $G$ is amenable with left-invariant measure $\mu$. Then Proposition \ref{prop:Bohr}$(a)$ implies that any $(\delta,\U(n))$-Bohr neighborhood in $G$ has measure at least $(\delta/c)^{n^2}$. Now recall that a function $f\colon G\to \R$ is  \emph{$\zeta$-almost $\epsilon$-constant} on a set $B\seq G$ if it is $\epsilon$-constant on some $B'\seq  B$ with $\mu(B')\geq\mu(B)-\zeta$. This statement loses potency if $\zeta$ is large relative to $\mu(B)$. However our main result (Theorem \ref{thm:SARF}) is formulated with $\zeta$ a \emph{function}  of the parameters $\delta$ and $n$ associated to a unitary Bohr set. So, for example, suppose $f\colon G\to \R$ is $\gamma(c/\delta)^{n^2}$-almost $\epsilon$-constant on a $(\delta,\U(n))$-Bohr neighborhood $B\seq G$ for some $\gamma,\epsilon>0$. Then $f$ is $\epsilon$-constant on some $B'\seq B$ with $\mu(B')\geq (1-\gamma)\mu(B)$. In other words, requiring $\mu(B')\geq (1-\zeta)\mu(B)$ in the definition of ``$\zeta$-almost" would not affect the statement of Theorem \ref{thm:SARF}. That being said, we have chosen to not use this latter formulation in the actual definition in order to remove irrelevant (and distracting) steps in certain arguments.
\end{remark}

\subsection{Stable binary functions on sets}

\begin{definition}\label{def:stable}
Let $f\colon V\times W\to \R$ be a function, where $V$ and $W$ are arbitrary sets. Given an integer $k\geq 1$ and a real number $\epsilon>0$, we say that $f$ is \textbf{$(k,\epsilon)$-stable} if there do not exist $a_1,\ldots,a_k\in V$ and $b_1,\ldots,b_k\in W$  such that 
\[
|f(a_i,b_j)-f(a_j,b_i)|\geq\epsilon \text{ for all $1\leq i<j\leq k$}.
\]
Given $\bk\colon \R^+\to \Z^+$, we say that $f$ is \textbf{$\bk$-stable} if it is $(\bk(\epsilon),\epsilon)$-stable for all $\epsilon>0$.
\end{definition}

For a binary relation $E\seq V\times W$, stability of the indicator function $\bone_E$ recovers the standard definition of stability for $E$, up  to a uniform change in the parameters. See \cite[Remark 1.2]{CCP} for precise details. 

We will now discuss two families of stable functions, which can both be viewed as coming from stability of certain continuous theories. 

\begin{example}\label{ex:cont-stable}
Suppose $X$ and $Y$ are compact spaces and $f\colon X\times Y\to \R$ is a continuous function. Then $f$ is $\bk$-stable for some $\bk\colon \R^+\to\Z^+$.
\end{example}

When $X$ and $Y$ are metrizable, this  is the continuous logic analogue of the trivial fact that any binary relation on finite sets is stable. That being said, a direct proof is still an informative exercise (which we leave to the reader). Moreover, as discussed in the next subsection, this example will provide some useful intuition for our main arithmetic regularity results.  

The next example is much less trivial, but also well known (and previously discussed in \cite[Section 1]{CCP}).

\begin{fact}\label{fact:Hilbert}
There is a function $\bk\colon\R^+\to\Z^+$ such that if $B$ is the unit ball of a Hilbert space, then the inner product is $\bk$-stable as a function from $B\times B$ to $[\nv 1,1]$.
\end{fact}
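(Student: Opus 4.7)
The plan is to prove the statement by contradiction using compactness together with weak limits. Suppose no such $\bk$ exists. Then there is a fixed $\epsilon>0$ such that for every $k\in\N$ we can find $a_1^{(k)},\ldots,a_k^{(k)}$ and $b_1^{(k)},\ldots,b_k^{(k)}$ in the closed unit ball of some Hilbert space $H_k$ with $|\langle a_i^{(k)},b_j^{(k)}\rangle-\langle a_j^{(k)},b_i^{(k)}\rangle|\geq\epsilon$ for all $1\leq i<j\leq k$. The first step is to pass to a single ambient Hilbert space containing infinite witnessing sequences. The cleanest route is the Banach space ultraproduct of $(H_k)$, which is again a Hilbert space because the parallelogram identity passes to ultralimits. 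This produces sequences $(a_i)_{i\in\N}$ and $(b_j)_{j\in\N}$ in the unit ball $B$ of a single Hilbert space $H$ with $|\langle a_i,b_j\rangle-\langle a_j,b_i\rangle|\geq\epsilon$ for all $i<j$.

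The second step is to invoke weak sequential compactness of $B$ together with a diagonal argument to extract a single subsequence (still indexed by $\N$ for readability) along which $a_i\to a_\infty$ and $b_j\to b_\infty$ weakly in $H$. The instability inequality is preserved under any subsequence that respects the order of indices, so we may assume it for the extracted sequences as well, with limits $a_\infty,b_\infty\in B$.

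The final step is a double iterated-limit calculation. For fixed $i$, weak convergence of $(b_j)$ gives $\langle a_i,b_j\rangle\to\langle a_i,b_\infty\rangle$ as $j\to\infty$, and weak convergence of $(a_j)$ gives $\langle a_j,b_i\rangle\to\langle a_\infty,b_i\rangle$. Combining with $|\langle a_i,b_j\rangle-\langle a_j,b_i\rangle|\geq\epsilon$ for all $j>i$ yields $|\langle a_i,b_\infty\rangle-\langle a_\infty,b_i\rangle|\geq\epsilon$ for every $i$. Now let $i\to\infty$: both $\langle a_i,b_\infty\rangle$ and $\langle a_\infty,b_i\rangle$ tend to $\langle a_\infty,b_\infty\rangle$, so the difference tends to $0$, which contradicts the uniform lower bound $\epsilon$.

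The only mildly delicate point I anticipate is organizing the reduction to a single Hilbert space so that the instability property survives with infinite sequences indexed by $\N$. The ultraproduct formulation is designed to make this automatic and fits naturally with the continuous-logic viewpoint of the paper; as an alternative one could embed each finite configuration into $\ell^2(\N)$ using orthonormal blocks and extract a convergent diagonal subsequence, but the ultraproduct route is shorter and more conceptual.
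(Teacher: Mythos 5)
Your argument is correct and carries out precisely the program the paper indicates via citation rather than proof: the Banach-space ultraproduct of Hilbert spaces (which is again a Hilbert space, a fact the paper attributes to Krivine's thesis) reduces the uniform claim to non-existence of an infinite $\epsilon$-order sequence inside a single Hilbert space, and your final iterated double-limit calculation via weak sequential compactness of the unit ball is exactly Grothendieck's double-limit criterion, the result the paper cites from \cite{GroWAP}. One cosmetic remark: no diagonal argument is needed to obtain a common subsequence along which both $(a_i)$ and $(b_i)$ converge weakly; two successive subsequence extractions (first for the $a$'s, then for the resulting $b$'s) suffice, using reflexivity plus Eberlein--\v{S}mulian (or restriction to the separable closed span of $\{a_i,b_i\}$).
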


While this fact follows from stability of Hilbert spaces in continuous logic, which is proved in \cite{BBHU}, it is also evident from much earlier literature.\footnote{Thanks to Ward Henson for communicating the following summary.} In particular, a slightly weaker version of stability (known as ``stability in a model") can be deduced for the inner product from a result of Grothendieck \cite{GroWAP}. Alternatively, stability (in a model) of the function $\|x+y\|$ is proved by Krivine and Maurey \cite{KrMau}, which then passes to the inner product using the parallelogram law. In either case, to obtain a uniform stability function $\bk$, one can  use the fact that Hilbert spaces are closed under ultrapowers, which was first established in Krivine's thesis. 

One takeaway from Fact \ref{fact:Hilbert} is that stable functions arise quite naturally across mathematics and perhaps are more common than stable graphs. Indeed, this fact has previously led to remarkable applications of stability in many areas. Further discussion can be found in \cite[Section 1]{CCP}.

\subsection{Stable unary functions on groups}\label{sec:preG}

Stability can be viewed as a property of a subset $A$  of a group $G$ by considering the binary relation $x\cdot y\in A$ on $G$. This is the context of the arithmetic regularity results in \cite{TeWo, TeWo2, CPT, CoQSAR}. For functions on groups, we  proceed analogously.

\begin{definition}\label{def:stableG}
Fix $\bk\colon\R^+\to\Z^+$. Given a group $G$, we say that a function $f\colon G\to \R$ is $\bk$-stable if $f(x\cdot y)\colon G\times G\to \R$ is $\bk$-stable.
\end{definition}

As previously observed, if $A$ is a stable subset of a group $G$, then $\bone_A$ is a stable function on $G$. For a different kind of example, it follows from Example \ref{ex:cont-stable} that if 
$G$ is a compact Hausdorff group, then any continuous function $f\colon G\to \R$ is $\bk$-stable for some $\bk$. As alluded to in the introduction,  this connects to our main results via the following standard exercise in topological groups.

\begin{proposition}\label{prop:continuous}
Let $G$ be a compact Hausdorff group. Then a function $f\colon G\to \R$ is continuous if and only if for every $\epsilon>0$, there is an open identity neighborhood $U\seq G$ such that $f$ is $\epsilon$-constant on all translates of $U$. 
\end{proposition}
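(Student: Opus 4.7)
The plan is to recognize this as essentially the statement that a continuous function on a compact Hausdorff group is uniformly continuous, packaged in terms of translates of identity neighborhoods.

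The backward direction is immediate and I would dispatch it first: assume the condition holds and fix $\epsilon>0$ and $g\in G$. Choose $U$ as given by the hypothesis. Then $gU$ is an open neighborhood of $g$, and the $\epsilon$-constancy of $f$ on $gU$ yields $|f(h)-f(g)|<\epsilon$ for every $h\in gU$. This gives continuity of $f$ at an arbitrary $g$.

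For the forward direction, I would invoke two standard facts: (i) any continuous real-valued function on a compact Hausdorff space is uniformly continuous for the unique compatible uniformity, and (ii) on a topological group $G$, this uniformity has a base of entourages of the form $\{(x,y):x\inv y\in U\}$ where $U$ ranges over open identity neighborhoods. Combining these, given $\epsilon>0$, I obtain an open identity neighborhood $U$ such that $x\inv y\in U$ implies $|f(x)-f(y)|<\epsilon$. To convert this into the desired statement about translates, I would pick an open, symmetric identity neighborhood $W$ with $W^2\seq U$ (possible by continuity of multiplication and symmetry of inversion). Then for any $g\in G$ and any $x,y\in gW$, write $x=gw_1$, $y=gw_2$ with $w_1,w_2\in W$; then $x\inv y=w_1\inv w_2\in W^2\seq U$, so $|f(x)-f(y)|<\epsilon$, which is precisely $\epsilon$-constancy on the translate $gW$.

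There is no real obstacle here; the only point requiring any thought is the passage from the entourage-style formulation of uniform continuity to $\epsilon$-constancy on translates, which is handled by the shrinking trick $W^2\seq U$. One could alternatively avoid quoting the abstract uniform-continuity theorem by a direct compactness argument: cover $G$ by finitely many neighborhoods $g_iU_i$ on which $f$ varies by less than $\epsilon/2$ (using continuity pointwise and the triangle inequality), and intersect suitable symmetric translates to extract a single $U$ that works uniformly. Either route gives the result in a few lines.
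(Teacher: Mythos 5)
Your proof is correct. The paper leaves this proposition as a standard exercise without supplying a proof, and the argument you give (pointwise continuity for the easy direction; uniform continuity via the left uniformity together with the shrinking trick of choosing a symmetric $W$ with $W^2\seq U$ for the harder direction) is exactly the expected standard argument, as is your sketched self-contained alternative via a finite subcover.
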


In light of the previous fact, Theorem  \ref{thm:SARF} says that if a function $f$ on an amenable group  behaves like a continuous function on a compact  group in the sense of being stable, then $f$ is structurally similar to such a  function.

\begin{remark}
The previous discussion can be made more precise through the use of the Bohr topology on a  group $G$ (i.e., the possibly non-Hausdorff topology induced from the Bohr compactification of $G$). Given $f\colon G\to \R$, if for all $\epsilon>0$ there are $\delta,n$  such that $f$ is $\epsilon$-constant on all translates of a $(\delta,\U(n))$-Bohr neighborhood in $G$, then $f$ is uniformly continuous with respect to the Bohr topology on $G$. So Theorem \ref{thm:SARF} says that stable functions exhibit an approximate form of uniform Bohr-continuity. Thus the role of uniformly Bohr-continuous functions on $G$  in arithmetic regularity for stable functions is roughly analogous to the role of unions of cosets of subgroups of $G$ in arithmetic regularity for stable sets. 
\end{remark}

Next we use Fact \ref{fact:Hilbert} to identify another example of stable functions, which will play a key role in our applications in Section \ref{sec:app}.
Recall that if $G$ is amenable, then any left-invariant measure $\mu$ on $G$ uniquely determines a positive linear functional $\int f\,d\mu$ on bounded real-valued functions $f$ on $G$, which satisfies  $\mu(A)=\int \bone_A\,d\mu$ for all $A\seq G$. See  \cite[pp. 2-3]{Patbook} for details. 

\begin{definition}\label{def:conv}
Let $G$ be an amenable group with left-invariant measure $\mu$. Given bounded functions $f,g\colon G\to \R$, the \textbf{convolution of $f$ and $g$} is the function $f\ast g\colon G\to \R$ such that for $x\in G$,
\[
(f\ast g)(x)=\int f(t)g(t\inv x)\, d\mu(t).
\]
\end{definition}

The next corollary is essentially the same as \cite[Corollary 1.4$(ii)$]{CCP}, but in a slightly different setting. 

\begin{corollary}\label{cor:conv-stable}
Let $\bk\colon \R^+\to\Z^+$ be as in Fact \ref{fact:Hilbert}. Suppose $G$ is an amenable group and  $f,g\colon G\to [\nv 1,1]$ are arbitrary functions. Then $f\ast g$ is $\bk$-stable.
\end{corollary}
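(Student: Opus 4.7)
The plan is to express the convolution $(f \ast g)(xy)$ as a Hilbert space inner product in $x$ and $y$, and then reduce $\bk$-stability of $(f \ast g)(x \cdot y)$ directly to $\bk$-stability of the inner product (Fact \ref{fact:Hilbert}).

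First I would perform a change of variable using left-invariance of $\mu$. For $x, y \in G$, writing $t = xs$ in the definition of convolution gives
\[
(f \ast g)(xy) = \int f(t) g(t\inv xy)\, d\mu(t) = \int f(xs)\, g(s\inv y)\, d\mu(s),
\]
since $(xs)\inv xy = s\inv y$ and $\mu$ is left-invariant. Now define $F_x, G_y \colon G \to [\nv 1,1]$ by $F_x(s) = f(xs)$ and $G_y(s) = g(s\inv y)$. Since $\mu$ is a finitely additive probability measure on $G$, the bilinear form $\langle h_1, h_2 \rangle = \int h_1 h_2 \, d\mu$ on bounded real-valued functions on $G$ is positive semi-definite and symmetric; quotienting by its null space and completing yields a real Hilbert space $H$, with a natural map from bounded functions into $H$. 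Under this map, $F_x$ and $G_y$ have norm at most $1$ since $|F_x|, |G_y| \leq 1$ and $\mu(G) = 1$. The identity above then reads $(f\ast g)(xy) = \langle F_x, G_y \rangle_H$.

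Now I would verify $\bk$-stability of $(f \ast g)(x \cdot y)$ by contradiction. Fix $\epsilon>0$ and suppose there exist $a_1, \ldots, a_{\bk(\epsilon)}, b_1, \ldots, b_{\bk(\epsilon)} \in G$ such that $|(f \ast g)(a_i b_j) - (f \ast g)(a_j b_i)| \geq \epsilon$ for all $1 \leq i < j \leq \bk(\epsilon)$. Setting $u_i = F_{a_i}$ and $v_j = G_{b_j}$, which all lie in the unit ball $B$ of $H$, this becomes $|\langle u_i, v_j \rangle - \langle u_j, v_i \rangle| \geq \epsilon$ for all $i < j$, contradicting the $\bk$-stability of the inner product on $B \times B$ given by Fact \ref{fact:Hilbert}.

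The only mildly delicate point is that $\mu$ is merely finitely additive, so the "$L^2$-space'' is not a genuine $L^2(G,\mu)$ in the measure-theoretic sense; but all that is needed is the GNS-type construction of a real Hilbert space from the positive semi-definite form $\langle \cdot, \cdot \rangle$, which goes through for any positive linear functional on bounded functions. Once that is in place, the stability bound is transferred verbatim from the inner product on $B$ to $(f \ast g)(x\cdot y)$, with the same function $\bk$.
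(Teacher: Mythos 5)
Your proposal is correct and follows essentially the same approach as the paper: express $(f\ast g)(xy)$ as the inner product $\langle F_x, G_y\rangle$ in a Hilbert space built from the invariant mean, with $F_x(s)=f(xs)$ and $G_y(s)=g(s\inv y)$ both lying in the unit ball, and then invoke Fact \ref{fact:Hilbert}. The only (cosmetic) difference is how the Hilbert space is realized: the paper pushes $\mu$ to a regular Borel probability measure on $\beta G$ and uses $L^2(\beta G,\mu)$, whereas you run the GNS construction directly on the positive functional; these give the same space.
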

\begin{proof}
Let $\mu$ be the implied measure on $G$ from the statement of the corollary, which we view as a regular Borel probability measure on $\beta G$. Consider the Hilbert space $H=L^2(\beta G,\mu)$ with inner product $\langle f,g\rangle=\int fg\,d\mu$, and let $B$ denote the unit ball. Note that any  function $f\colon G\to [\nv 1,1]$ extends uniquely to a continuous function on $\beta G$ in $B$. Given  $f,g\colon G\to  [\nv 1,1]$, define $f_1,g_2\colon G\to B$ such that $f_1(x)=f(x t)$ and $g_2(y)=g(t\inv y)$. Then $(f\ast g)(xy)=\langle f_1(x), g_2(y)\rangle$. So $f\ast g$ is $\bk$-stable (as a function on the group $G$) by Fact \ref{fact:Hilbert}.
\end{proof}

The next example uses Corollary \ref{cor:conv-stable} to show that one cannot replace Bohr neighborhoods with subgroups in Theorem \ref{thm:SARF} (unlike the case of stable sets).

\begin{example}\label{ex:ZpZ}
Let $G=\Z/p\Z$ for some prime $p>2$, and let $A=\{0,1,\ldots,\frac{p-1}{2}\}$. Define $f\colon G\to [0,1]$ such that $f(x)=\frac{1}{p}|A\cap (x+A)|$. Then  $f=\bone_A\ast \bone_{\nv A}$, and so $f$ is $\bk$-stable where $\bk$ is as in Fact \ref{fact:Hilbert}.  However, as $p$ increases, $f$ takes values arbitrarily close to $0$ and  to $\frac{1}{2}$ on cyclic intervals of positively bounded measure. So for  sufficiently small $\epsilon$, $f$ is not $\epsilon$-almost $\epsilon$-constant on $G$ (which is the only subgroup of index independent of $p$).
\end{example}

On the other hand, if one restricts to groups of a fixed finite exponent, then such a strengthening of Theorem \ref{thm:SARF} does follow; see Corollary \ref{cor:exponent}.

\subsection{Topological dynamics}

Let $G$ be a group. A \textbf{$G$-flow} is a compact Hausdorff space $S$ together with a left action of $G$ by homeomorphisms. For $g\in G$, let $\breve{g}$ denote the corresponding homeomorphism of $S$ (following the notation from Glasner's survey \cite{Glas-surv}). As shown by Ellis \cite{EllLTD}, the closure of  $\{\breve{g}:g\in G\}$ in the space of functions from $S$ to $S$ is a semigroup under  composition, called the \emph{Ellis semigroup} $E(S)$ of $S$.\footnote{We caution the reader that \cite{EllLTD} works in the context of right actions (as does \cite{EllNer} cited in Theorem \ref{thm:EN}), whereas we use left actions (as does \cite{Glas-surv}).} Note that $E(S)$ is itself a $G$-flow under the natural action of $G$. One can also check that for a fixed $\sigma\in E(S)$, the right composition map $\tau\mapsto \tau\circ\sigma$ is continuous; so $E(S)$ is a \emph{right topological semigroup}. The following fact (which is a standard exercise) says that the Ellis semigroup operation is idempotent.

\begin{fact}\label{fact:E=EE}
Let $S$ be $G$-flow. Then for any $\sigma\in E(S)$, the left composition map $L_\sigma\colon \tau\mapsto \sigma\circ\tau$ is in $E(E(S))$. Moreover, $\sigma\mapsto L_\sigma$ is an isomorphism  (of $G$-flows and right topological semigroups) from $E(S)$ to $E(E(S))$.
\end{fact}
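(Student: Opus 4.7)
My plan is to work directly from the definition of $E(S)$ as the pointwise closure of $\{\breve{g}:g\in G\}$ in $S^S$, and to exploit the fact that pointwise convergence in $E(S)^{E(S)}$ unravels to ``doubly pointwise'' convergence on $S$. Throughout I will use that the $G$-action on $E(S)$ sends $g$ to the map $\tau\mapsto \breve{g}\circ\tau$, i.e., precisely $L_{\breve{g}}$; thus $E(E(S))$ is by definition the pointwise closure in $E(S)^{E(S)}$ of $\{L_{\breve{g}}:g\in G\}$.

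First I would show $L_\sigma\in E(E(S))$ for $\sigma\in E(S)$. Pick a net $(g_i)$ in $G$ with $\breve{g}_i\to\sigma$ pointwise on $S$. I claim $L_{\breve{g}_i}\to L_\sigma$ pointwise on $E(S)$. Indeed, given $\tau\in E(S)$, I need $\breve{g}_i\circ\tau\to\sigma\circ\tau$ in $E(S)$, and convergence there is pointwise on $S$: for each $x\in S$, $\breve{g}_i(\tau(x))\to\sigma(\tau(x))$, which is just the hypothesis evaluated at the point $\tau(x)$.

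Next I would verify that $\Phi\colon\sigma\mapsto L_\sigma$ is a morphism of $G$-flows and right topological semigroups. Associativity of function composition gives $L_{\sigma_1\circ\sigma_2}=L_{\sigma_1}\circ L_{\sigma_2}$, which simultaneously encodes the semigroup homomorphism property and (by taking $\sigma_1=\breve{g}$) equivariance. For continuity, if $\sigma_i\to\sigma$ in $E(S)$, then for every $\tau\in E(S)$ and every $x\in S$ we have $\sigma_i(\tau(x))\to\sigma(\tau(x))$; this says $L_{\sigma_i}(\tau)\to L_\sigma(\tau)$ pointwise, i.e.\ $L_{\sigma_i}\to L_\sigma$ in $E(E(S))$. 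For injectivity, if $L_\sigma=L_{\sigma'}$, evaluate at $\tau=\mathrm{id}_S=\breve{e}\in E(S)$ (here $e$ is the identity of $G$) to get $\sigma=\sigma'$.

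For surjectivity, take $\Sigma\in E(E(S))$ and pick a net $(g_i)$ with $L_{\breve{g}_i}\to\Sigma$ pointwise on $E(S)$. By compactness of $E(S)$ I may pass to a subnet along which $\breve{g}_i\to\sigma$ for some $\sigma\in E(S)$. The first step then gives $L_{\breve{g}_i}\to L_\sigma$, so $\Sigma=L_\sigma$. Combined with the previous paragraph, $\Phi$ is a continuous bijection between compact Hausdorff spaces, hence a homeomorphism, and therefore an isomorphism of the full structures. I expect the only mild subtlety to be keeping the two layers of pointwise topology cleanly separated (nets in $G$ convergent in $S^S$ versus nets in $E(S)$ convergent in $E(S)^{E(S)}$); once that is written out carefully, every step is a one-line unwinding of definitions and associativity of composition.
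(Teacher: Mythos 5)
The paper states this fact without proof, calling it a standard exercise, so there is no internal argument to compare against. Your proof is correct and complete: the key observations are that (i) the $g$-action on $E(S)$ is precisely $L_{\breve g}$, so $E(E(S))$ is the pointwise closure of $\{L_{\breve g}:g\in G\}$ in $E(S)^{E(S)}$, and (ii) pointwise convergence on $E(S)$ unwinds to pointwise convergence on $S$ after evaluating at each $\tau(x)$, which is exactly what makes $\sigma\mapsto L_\sigma$ continuous and well-defined into $E(E(S))$. The injectivity step via evaluation at $\mathrm{id}_S=\breve e$, the surjectivity via compactness of $E(S)$ plus uniqueness of limits in the Hausdorff space $E(S)^{E(S)}$, and the final compact-Hausdorff homeomorphism argument are all sound, as is the use of associativity to get both the semigroup homomorphism property and $G$-equivariance in one stroke.
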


The next result is the key ingredient from topological dynamics needed for our proof of Theorem \ref{thm:SARF}. 

\begin{theorem}[Ellis \& Nerurkar \cite{EllNer}]\label{thm:EN}
Let $S$ be a $G$-flow with Ellis semigroup $E$, and assume that every function in $E$ is continuous. Then:
\begin{enumerate}[$(a)$]
\item $E$ has a unique minimal subflow $K$.
\item $K$ is the unique left ideal in $(E,\circ)$.
\item $(K,\circ)$ is a compact Hausdorff group.
\item The identity in $K$ commutes with every element of $E$.
\item$E$ is \textbf{uniquely ergodic}, i.e., there is a unique $G$-invariant regular Borel probability measure on $E$.
\end{enumerate}
\end{theorem}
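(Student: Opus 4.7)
The plan is to use the hypothesis that every $\sigma\in E$ is continuous to upgrade $E$ from a right topological semigroup to a \emph{semitopological} one, in which composition is continuous in each variable separately. Right continuity is automatic, and left continuity is immediate from pointwise convergence: if $\tau_i\to\tau$ in $E$ then $\sigma(\tau_i(s))\to\sigma(\tau(s))$ by continuity of $\sigma$, so $\sigma\circ\tau_i\to\sigma\circ\tau$. All five conclusions will then be derived by combining this two-sided continuity with the standard toolkit of compact right topological semigroups, notably Zorn's lemma applied to closed left ideals and the Ellis--Numakura idempotent lemma.

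For parts (a)--(c), I would first extract a minimal closed left ideal $L$ via Zorn; under the WAP upgrade, the closed $G$-invariant subsets of $E$ coincide with the closed left ideals, since $G$ is dense in $E$ by construction and left translation by every $\sigma\in E$ is now continuous. Thus $L$ is automatically a minimal subflow. For any $\sigma\in L$, the set $E\sigma$ is closed (by right continuity) and is a left ideal contained in $L$, so $E\sigma=L$. Ellis--Numakura then supplies an idempotent $u\in L$, and one invokes the classical structural fact that in a compact \emph{semitopological} semigroup, every minimal left ideal is a compact Hausdorff topological group. This is precisely where the new left continuity is essential, as it is what collapses the multiple idempotents that can coexist in a minimal left ideal of a generic right topological semigroup. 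Uniqueness of $L$ among minimal left ideals then follows from the observation that if $L'$ were another such ideal, then $LL'$ would be a nonempty left ideal contained in both $L$ and $L'$, forcing $L=L'$; and since any left ideal contains a minimal one by Zorn, this also yields (b). Setting $K:=L$ completes (a)--(c). I expect this uniqueness-and-group-structure step to be the main obstacle, since it is exactly where the semitopological upgrade must be exploited nontrivially to go beyond the weaker conclusions available in arbitrary compact right topological semigroups.

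For (d), observe that for any $\sigma\in E$, both $u\sigma$ and $\sigma u$ lie in $K$ ($u\sigma$ because $K$ is a left ideal, $\sigma u$ because $K$ is also a right ideal by (b)); then comparing $u(\sigma u)$ with $(u\sigma)u$ inside the group $K$, where $u$ is the identity, forces $u\sigma=\sigma u$. For (e), I would construct the invariant measure by pushing forward the Haar probability measure on $K$ along the continuous $G$-equivariant retraction $\pi\colon E\to K$, $\sigma\mapsto u\sigma$, which is well-defined by (d) and surjective because $uK=K$. Uniqueness then follows from two standard facts: any $G$-invariant regular Borel probability measure on $E$ must be supported on the unique minimal subflow $K$, and on $K$ it must coincide with Haar measure by uniqueness of Haar measure on a compact group.
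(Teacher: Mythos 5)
The paper does not give a proof of this theorem; it is cited directly from Ellis and Nerurkar. Your proposal, however, has a genuine gap in the uniqueness-and-group-structure step for parts (a)--(c), and it is exactly the step you flagged as "the main obstacle." The assertion that \emph{in a compact semitopological semigroup every minimal left ideal is a compact topological group} is false: the four-element rectangular band $\{1,2\}\times\{1,2\}$ with $(i,j)(k,l)=(i,l)$, given the discrete topology, is a compact (even jointly continuous) semitopological semigroup with two distinct minimal left ideals, each containing two idempotents and neither being a group. Your uniqueness argument is also flawed: for minimal left ideals $L,L'$ the product $LL'$ satisfies $LL'\subseteq EL'\subseteq L'$, but \emph{not} $LL'\subseteq L$ (that would require $L$ to be a \emph{right} ideal, which you have not established), so $LL'$ is a left ideal contained in $L'$ only, giving $LL'=L'$ but no contradiction with $L\neq L'$. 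The rectangular band realizes this failure: $L_1L_2=L_2\neq L_1$.

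What your argument is missing is the feature that distinguishes Ellis semigroups from arbitrary compact semitopological semigroups: $E$ has an identity element $\breve{e}$ and a \emph{dense subgroup of units} $\{\breve{g}:g\in G\}$. This extra structure is what rules out the rectangular-band pathology and is the actual engine behind Ellis--Nerurkar's conclusions. Concretely, one must use density of the units together with separate continuity to show that the idempotent $u$ in a minimal left ideal commutes with all of $E$ (this is how (d) is proved in the source, \emph{before} and as an ingredient of the group structure, not as a corollary of it), and from there deduce that the minimal left ideal is also a right ideal and hence equals the kernel and is a group. Your proof of (e) inherits a related gap: a $G$-invariant measure on $E$ has support a closed $G$-invariant set, hence a closed left ideal containing $K$, but there is no a priori reason the support is \emph{contained} in $K$. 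The actual proof of unique ergodicity goes through a convolution/averaging argument that again exploits separate continuity and the interchange of iterated limits available for weakly almost periodic functions, not a naive support argument.
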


A $G$-flow satisfying the assumptions of the previous theorem is called \textbf{weakly almost periodic} (although this is not the official definition; see \cite{EllNer} for further details).  It is worth noting that the proof of Theorem \ref{thm:EN}  relies on the same result of Grothendieck \cite{GroWAP} mentioned above in the context of Fact \ref{fact:Hilbert}. As shown by Ben Yaacov \cite{BYGro}, this result can also be used to give a quick proof of Theorem \ref{thm:SFT} below, which is the main model-theoretic ingredient in our proof of Theorem \ref{thm:SARF}. 

\subsection{Continuous logic and stability}\label{sec:CL}

We will work in the standard framework of continuous logic as in the monograph \cite{BBHU}. The reader is also referred to \cite[Section 2]{CCP} for further details of certain aspects not directly discussed in \cite{BBHU}. Throughout this subsection, $\cL$ denotes a first-order language in continuous logic. 

\begin{definition}\label{def:formula}
Let $M$ be an $\cL$-structure  and let $\Phi$ be a collection of partitioned $\cL$-formulas of the form $\varphi(x,y)$ where $x$ and $y$ are fixed tuples of variables. Then a \textbf{$\Phi$-formula over $M$} is a continuous function from $S_\Phi(M)$ to $\R$, where $S_\Phi(M)$ is the compact Hausdorff space of complete $\Phi$-types over $M$.
\end{definition}

See \cite[Section 2]{CCP} and \cite[Section 6]{BYU} for further details in the case when $\Phi$ is a single formula $\varphi(x,y)$. Recall that in general, a $\Phi$-formula is  built from formulas in $\Phi$ using an \emph{infinitary} continuous connective (see \cite[Fact 6.1]{BYU}). For this reason, what we call $\Phi$-formula would be referred to in \cite{BYU} as a ``$\Phi$-predicate", with ``formula"  reserved for the case of  finitary connectives. Note that this discrepancy does not arise in classical logic, where a $\Phi$-formula (as we have defined it) is always given by a finite Boolean combination of instances from $\Phi$.

We will be working with Keisler measures in continuous logic, viewed either as linear functionals on formulas or as regular Borel probability measures on type spaces. See \cite[Section 3.4]{BYtop} and \cite[Section 2.3]{CCP} for details. However, we will diverge from these sources in that we will use the same notation for  functionals and measures. To clarify, let $M$ be an $\cL$-structure, and suppose $\mu$ is a Keisler measure over $M$ in variables $x$, i.e., a regular Borel probability measure on $S_x(M)$. Given an $\cL$-formula $\theta(x)$ over $M$, we let $\mu(\theta(x))$ denote $\int_{S_x(M)}\theta(x)\, d\mu$. Given an $\cL$-formula $\theta(x)$ and a Borel set $B\seq \R$, we write $\mu(\theta(x)\in B)$ for $\mu(\theta\inv(B))$. We will also use this notation in the local situation of \emph{Keisler $\Phi$-measures over $M$}.

We now recall the definition of stability for formulas in continuous logic.\footnote{This definition is formulated specifically for our purposes, but one can easily check it is equivalent to the standard definition from other sources (e.g., \cite{BYU}).}

\begin{definition}
Let $T$ be a complete $\cL$-theory and fix an $\cL$-formula $\varphi(x,y)$.
\begin{enumerate}[$(a)$]
\item  Given $k\geq 1$ and $\epsilon>0$, we say $\varphi(x,y)$ is \textbf{$(k,\epsilon)$-stable (in $T$)} if for every $M\models T$, the function $\varphi\colon M^x\times M^y\to \R$ is $(k,\epsilon)$-stable. 
\item We say that $\varphi(x,y)$ is \textbf{stable (in $T$)} there is some $\bk\colon \R^+\to\Z^+$ such that for all $\epsilon>0$, $\varphi(x,y)$ is $(k(\epsilon),\epsilon)$-stable (in $T$). In this case, we also say $\varphi(x,y)$ is \textbf{$\bk$-stable (in $T$)}.
\end{enumerate}
\end{definition}

The following is a basic exercise (see also the proof of \cite[Lemma 5.5]{CCP}). 

\begin{proposition}\label{prop:stableup}
Let $(M_i)_{i\in I}$ be a family of $\cL$-structures and let $\cU$ be an ultrafilter on $I$. Fix an $\cL$-formula $\varphi(x,y)$ and assume that for some $k\geq 1$ and $\epsilon>0$, $\varphi(x,y)$ is $(k,\epsilon)$-stable in $\Th(M_i)$ for all $i\in I$. Then for any $\epsilon'>\epsilon$, $\varphi(x,y)$ is $(k,\epsilon')$-stable in $\Th(\prod_{\cU}M_i)$.
\end{proposition}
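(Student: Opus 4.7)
The approach is to encode the failure of $(k,\epsilon)$-stability as the value of a single $\sup$-sentence, transfer that numerical value through the ultraproduct via {\L}o\'s's theorem for continuous logic, and then recover $(k,\epsilon')$-stability from the bound on the sentence's value. Concretely, define the continuous $\cL$-sentence
\[
\sigma \;:=\; \sup_{x_1,\ldots,x_k,\,y_1,\ldots,y_k}\; \min_{1\leq i<j\leq k} \bigl|\varphi(x_i,y_j)-\varphi(x_j,y_i)\bigr|,
\]
which makes sense since $|\cdot|$, $-$, and $\min$ are Lipschitz connectives. For any $\cL$-structure $M$, writing $\psi(\bar a,\bar b):=\min_{i<j}|\varphi(a_i,b_j)-\varphi(a_j,b_i)|$, we have $\sigma^M = \sup_{\bar a,\bar b\in M^k}\psi(\bar a,\bar b)$. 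Two elementary observations tie $\sigma$ to the definition of stability: if $\varphi$ is $(k,\epsilon)$-stable in $M$ then all values of $\psi$ are strictly less than $\epsilon$, hence $\sigma^M\leq\epsilon$; conversely, if $\sigma^M<\eta$ then no tuple can witness $\psi\geq \eta$, so $\varphi$ is $(k,\eta)$-stable in $M$.

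With these in hand, the main step is brief. By hypothesis, for each $i\in I$ the formula $\varphi$ is $(k,\epsilon)$-stable in $M_i$ (since $M_i\models \Th(M_i)$), so $\sigma^{M_i}\leq \epsilon$. {\L}o\'s's theorem for continuous logic gives
\[
\sigma^{\prod_\cU M_i} \;=\; \lim_\cU \sigma^{M_i} \;\leq\; \epsilon,
\]
and since $\sigma$ is a sentence, its value is preserved under elementary equivalence, so $\sigma^N\leq\epsilon$ for every $N\models \Th(\prod_\cU M_i)$. Given $\epsilon'>\epsilon$, we then have $\sigma^N\leq\epsilon<\epsilon'$, so by the second observation above $\varphi$ is $(k,\epsilon')$-stable in $N$. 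Since $N$ was arbitrary, $\varphi$ is $(k,\epsilon')$-stable in $\Th(\prod_\cU M_i)$.

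The only delicate point, and the reason the statement needs a genuine inequality $\epsilon'>\epsilon$ rather than equality, is that the supremum defining $\sigma^M$ need not be attained. Stability in $M$ at threshold $\epsilon$ produces only the weak bound $\sigma^M\leq \epsilon$, and {\L}o\'s's theorem in continuous logic likewise delivers only a limit, not a strict inequality. The $\epsilon$-gap is exactly what is needed to pass from the bound $\sigma^N\leq \epsilon$ back to the strict ``$<\epsilon'$'' condition that rules out witnesses with $\psi\geq \epsilon'$. I expect no other obstacles; everything else is routine handling of Lipschitz connectives and an appeal to continuous {\L}o\'s.
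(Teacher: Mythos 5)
Your proof is correct, and it is the natural (and presumably intended) argument: encode the defect of $(k,\epsilon)$-stability as a single $\sup$-$\min$ sentence, push its value through the continuous {\L}o\'s theorem, and use the slack between $\epsilon$ and $\epsilon'$ to convert the weak inequality $\sigma^N\leq\epsilon$ (which is all one can obtain, since neither the supremum nor the ultralimit is strict) back into the strict ``no witness at threshold $\epsilon'$'' form of stability. The paper itself only gestures at the proof, calling it ``a basic exercise'' and pointing to \cite[Lemma 5.5]{CCP}, so there is no explicit argument to compare against; your version fills that gap cleanly. One small point worth making explicit is that because a sentence takes the same value in all models of a complete theory, the bound $\sigma^{\prod_\cU M_i}\leq\epsilon$ really does give $(k,\epsilon')$-stability \emph{in the theory} (i.e.\ in every model), which is what the statement requires; you did note this, and it is precisely the reason the sentence-based phrasing is preferable to a direct ``pull back the witnesses'' argument, which on its own would only rule out bad tuples in the ultraproduct itself.
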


 The next theorem is an important result from the foundations of stability theory (see \cite[Chapter II]{Shbook} and \cite[Chapter 1]{PiGST}). The generalization to continuous theories was done by  Ben Yaacov and Usvyatsov (see Propositions 7.6 and 7.16 in \cite{BYU}). Given an $\cL$-formula $\varphi(x,y)$, we let $\varphi^*(y,x)$ denote the same formula, but with the roles of object and parameter variables exchanged.

\begin{theorem}\label{thm:SFT}
Let $T$ be a complete $\cL$-theory and suppose $\varphi(x,y)$ is an $\cL$-formula that is stable in $T$. 
\begin{enumerate}[$(a)$]
\item For any $M\models T$, if $p\in S_\varphi(M)$ then the map $b\mapsto \varphi(p,b)$ from $M^y$ to $\R$ is given by a  $\varphi^*$-formula over $M$, which we denote by $d^\varphi_p(y)$.
\item For any $M\models T$, if $p\in S_\varphi(M)$ and $q\in S_{\varphi^*}(M)$, then $d^\varphi_p(q)=d^{\varphi^*}_q(p)$.
\end{enumerate}
\end{theorem}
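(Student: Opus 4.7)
\emph{Plan.} The strategy is to reduce both parts to standard consequences of stability via Grothendieck's double-limit criterion, adapted to continuous logic.

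\emph{Step 1 (Double-limit property).} First I would show that $\bk$-stability of $\varphi(x,y)$ implies the following: for any $M\models T$ and any sequences $(a_n)_n\subseteq M^x$, $(b_m)_m\subseteq M^y$ such that both iterated limits $\alpha=\lim_n\lim_m \varphi(a_n,b_m)$ and $\beta=\lim_m\lim_n \varphi(a_n,b_m)$ exist in $[\nv 1,1]$, we have $\alpha=\beta$. The argument is by contradiction: if $|\alpha-\beta|>3\epsilon$, then choosing indices $n_1,m_1,n_2,m_2,\ldots$ inductively so that at each stage the relevant values are within $\epsilon$ of the respective iterated limits yields a $(k,\epsilon)$-pattern for arbitrarily large $k$, contradicting $(\bk(\epsilon),\epsilon)$-stability.

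\emph{Step 2 (Definability, part (a)).} Fix $p\in S_\varphi(M)$ and define $\tilde d\colon M^y\to\R$ by $\tilde d(b)=p(\varphi(x,b))$. The task is to show $\tilde d$ factors through the canonical map $M^y\to S_{\varphi^*}(M)$ and that the induced map extends continuously to all of $S_{\varphi^*}(M)$. Both properties follow from Step 1: if $b,b'\in M^y$ have the same $\varphi^*$-type over $M$, then picking a sequence in $M^x$ whose $\varphi$-types approximate $p$ presents $p(\varphi(x,b))$ and $p(\varphi(x,b'))$ as iterated limits that Step 1 forces to agree, and a similar argument gives continuity. The cleanest implementation uses Grothendieck's theorem directly: the bounded family $F=\{\varphi(a,\cdot):a\in M^x\}$ sits inside $C(S_{\varphi^*}(M))$ and satisfies the double-limit criterion by Step 1, so its convex hull is relatively weakly compact. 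Since $\tilde d$ is a $p$-average of members of $F$, it lies in the weak closure of $\operatorname{conv}(F)$ in $C(S_{\varphi^*}(M))$, hence is itself continuous on $S_{\varphi^*}(M)$. This yields the desired $\varphi^*$-formula $d^\varphi_p$.

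\emph{Step 3 (Symmetry, part (b)).} Given $p\in S_\varphi(M)$ and $q\in S_{\varphi^*}(M)$, apply part (a) to both $\varphi$ and $\varphi^*$ to obtain $d^\varphi_p\in C(S_{\varphi^*}(M))$ and $d^{\varphi^*}_q\in C(S_\varphi(M))$. Working in a sufficiently saturated elementary extension if necessary, choose sequences $(a_n)\subseteq M^x$ and $(b_m)\subseteq M^y$ whose $\varphi$- and $\varphi^*$-types over $M$ approximate $p$ and $q$ respectively on any prescribed finite set of parameters. Continuity of $d^\varphi_p$ and $d^{\varphi^*}_q$ combined with the identities $d^\varphi_p(b)=p(\varphi(x,b))$ on $b\in M^y$ and $d^{\varphi^*}_q(a)=q(\varphi^*(y,a))$ on $a\in M^x$ yields
\[
d^\varphi_p(q)=\lim_m\lim_n \varphi(a_n,b_m) \quad\text{and}\quad d^{\varphi^*}_q(p)=\lim_n\lim_m \varphi(a_n,b_m),
\]
and Step 1 forces the two iterated limits to coincide.

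\emph{Main obstacle.} The technical heart is Step 2: one must cross from the pointwise-defined function $\tilde d$ on $M^y$ to a genuinely continuous function on the (not necessarily metrizable) compact space $S_{\varphi^*}(M)$, using only the sequence-level information supplied by $\bk$-stability. The standard remedy is that $S_{\varphi^*}(M)$ embeds into a product $[\nv 1,1]^{M^x}$ with the pointwise topology and is therefore angelic, so relative weak compactness in $C(S_{\varphi^*}(M))$ can be detected by sequences, which is exactly the regime in which Grothendieck's criterion pairs cleanly with Step 1.
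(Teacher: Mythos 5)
The paper does not prove Theorem~\ref{thm:SFT}; it cites \cite[Propositions 7.6, 7.16]{BYU} as the source and also flags, just before the statement, that Ben Yaacov \cite{BYGro} gives a short proof via Grothendieck's weak compactness criterion. Your proposal is precisely that \cite{BYGro} route, so you are aligned with an approach the authors explicitly endorse, and your chain --- stability forces the double-limit property; Grothendieck's criterion then forces the pointwise closure of $F=\{\varphi(a,\cdot):a\in M^x\}\subseteq C(S_{\varphi^*}(M))$ to consist of continuous functions, placing $\tilde d=d^\varphi_p$ among them; symmetry falls out of the iterated-limit identity --- is correct in outline.

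Two points need fixing. In Step 2 the convex hull is an unnecessary detour, and ``$p$-average'' misdescribes the situation: a type is not a mixture, and $\tilde d$ is a pointwise cluster point of $F$ along a net of realized types converging to $p$; the relevant consequence of Grothendieck's theorem is that the pointwise closure of $F$ itself (not just its convex hull) already lands inside $C(S_{\varphi^*}(M))$. In Step 3, sequences of realized types converging to $p$ and $q$ need not exist, since $S_\varphi(M)$ and $S_{\varphi^*}(M)$ need not be first-countable when $M$ is uncountable. The standard repair is to take nets converging to $p$ and $q$ and extract the witnessing pair of \emph{sequences} greedily, at each finite stage choosing an element far enough along the net to satisfy the finitely many $\epsilon$-constraints accumulated so far; equivalently, since $d^\varphi_p$ and $d^{\varphi^*}_q$ are uniform limits of formulas over countably many parameters, one may pass to a countable elementary submodel and argue there. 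Your ``Main obstacle'' paragraph reaches toward the right idea, but the specific assertion that $[\nv 1,1]^{M^x}$ with the pointwise topology is angelic is false for uncountable $M^x$; what Grothendieck actually gives is agreement of the pointwise-on-a-dense-set topology with the weak topology on the (weakly compact) closure of $F$, and that is what allows sequences to suffice in the end.
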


We will frequently use the notation $d^\varphi_p(y)$ in situations where $p$ is a type in a  larger fragment of formulas that includes all $\varphi$-formulas (over some fixed model). In such cases, it is understood that $d^\varphi_p(y)$ means $d^{\varphi}_{p|_\varphi}(y)$.

\section{Main model-theoretic result}\label{sec:mainMT}
Throughout this section, we let $T$ be a continuous $\cL$-theory with a sort $G$ expanding a group. We also fix a model $M\models T$ and identify $G$ with $G(M)$. Let $x$ be of sort $G$, and let $\varphi(x,z)$ be an $\cL$-formula that is \emph{left-invariant}, i.e., for any $b\in M^z$ and $g\in G$ there is  $c\in M^z$ such that $\varphi(gx,b)=\varphi(x,c)$.  From this assumption it follows that the type space $S_\varphi(M)$ is a $G$-flow under the natural action.

 \textbf{For the entirety of this section, we assume $\varphi(x,z)$ is stable in $T$.}  Our main goal is Theorem \ref{thm:mainUP} below, which provides an arithmetic regularity statement for $\varphi$-formulas  in terms of a canonical definable compactification of $G$. The rough strategy will  follow the methods in \cite{CoLSGT} using topological dynamics. In particular, we will use Theorem \ref{thm:SFT} to give a concrete description of the Ellis semigroup of $S_\varphi(M)$,  and  then apply the results of Ellis and Nerurkar in Theorem \ref{thm:EN}.
 
Given $b\in M^z$, the formula $\varphi(y\cdot x,b)$ is itself stable by left-invariance and stability of $\varphi(x,z)$. So by Theorem \ref{thm:SFT}$(a)$, for every $b\in M^z$ and $p\in S_\varphi(M)$, we have a defining formula $d^b_p(y)\coloneqq d^{\varphi(y\cdot x,b)}_p(y)$.

  Now let  $\varphi^\sharp(x;y,z)$ denote the bi-invariant formula $\varphi(x\cdot y,z)$. \textit{We emphasize that this formula need not  be stable} (see \cite[Example 3.7]{CPpfNIP} and \cite[Example 5.11]{CoLSGT}). Nevertheless, given  $b\in M^z$ and $p\in S_{\varphi^\sharp}(M)$, we  have a $\varphi^\sharp$-formula $d^b_p(y)$ as above. Note that for any $b\in M^z$ and any $p$ in  $S_\varphi(M)$ or $S_{\varphi^\sharp}(M)$, the formula $d^b_p(y)$ is a $\varphi^\sharp$-formula over $M$ by Theorem \ref{thm:SFT}$(a)$.

  For the rest of this section,  let $S=S_{\varphi}(M)$ and $S^\sharp=S_{\varphi^\sharp}(M)$.

\begin{lemma}\label{lem:ESid}
$S^\sharp$ is isomorphic as a $G$-flow to the Ellis semigroup  of $S$. Moreover:
\begin{enumerate}[$(i)$]
\item The induced semigroup operation $\ast$ on $S^\sharp$ is given by $\varphi((p\ast q)g,b)=d^{b}_{qg}(p)$ where $p,q\in S^\sharp$, $g\in G$, and $b\in M^z$.
\item For any $p\in S^\sharp$, the map $q\mapsto p\ast q$ from $S^\sharp$ to $S^\sharp$ is continuous.
\end{enumerate}
\end{lemma}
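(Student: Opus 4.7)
The plan is to realize $S^\sharp$ as the Ellis semigroup of $S$ via an explicit $G$-equivariant map $\Psi$. Given $p \in S^\sharp$, I define $\sigma_p \colon S \to S$ by declaring $\sigma_p(q) \in S$ to be the unique $\varphi$-type with $\sigma_p(q)(\varphi(x,b)) = d^b_q(p)$ for every $b \in M^z$, where $d^b_q(y)$ is the $\varphi^\sharp$-formula supplied by Theorem~\ref{thm:SFT}(a) applied to the stable left-translate $\varphi(y \cdot x, b)$. By construction the right-hand side is the continuous extension of $g \mapsto q(\varphi(g \cdot x, b)) = (\breve g\, q)(\varphi(x,b))$ from $G$ (embedded in $S^\sharp$ via principal types) to all of $S^\sharp$, so for any net $g_i \to p$ in $S^\sharp$ we have $\breve{g_i}(q) \to \sigma_p(q)$ in $S$. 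This simultaneously certifies that $\sigma_p(q)$ is a genuine complete $\varphi$-type and displays $\sigma_p$ as the pointwise limit of $(\breve{g_i})$, so $\sigma_p \in E(S)$; set $\Psi(p) := \sigma_p$.

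Next I would verify that $\Psi$ is a $G$-flow isomorphism. Continuity of $\Psi$ is immediate from continuity of each $d^b_q$ on $S^\sharp$, and surjectivity is the reverse of the density argument above. For injectivity I would use that realized types $q_a := \tp_\varphi(a/M)$ with $a \in M$ satisfy $d^b_{q_a}(y) = \varphi(y \cdot a, b)$, giving the identity $p(\varphi(y \cdot a, b)) = \sigma_p(q_a)(\varphi(x,b))$; since $\varphi^\sharp$-types over $M$ are determined by these values, $\sigma_p = \sigma_{p'}$ forces $p = p'$. A short net argument using left-invariance of $\varphi$ yields $G$-equivariance $\sigma_{gp} = \breve g \circ \sigma_p$. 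Thus $\Psi$ is a continuous $G$-equivariant bijection between compact Hausdorff $G$-flows, hence a $G$-flow isomorphism, and transporting composition through $\Psi$ gives the semigroup operation $p \ast q := \Psi^{-1}(\sigma_p \circ \sigma_q)$ on $S^\sharp$. Applying the injectivity identity with $a = g$ and $p' = p \ast q$ yields (i):
\[
(p \ast q)(\varphi(y \cdot g, b)) = (\sigma_p \circ \sigma_q)(q_g)(\varphi(x,b)) = d^b_{\sigma_q(q_g)}(p) = d^b_{qg}(p),
\]
since $\sigma_q(q_g)(\varphi(x,b)) = d^b_{q_g}(q) = q(\varphi(y \cdot g, b))$ characterizes $qg$ as a $\varphi$-type.

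For (ii), if $q_i \to q$ in $S^\sharp$ then $q_i g \to qg$ in $S$ by the same characterization, so the continuity of $q \mapsto p \ast q$ reduces to continuity of $r \mapsto d^b_r(p)$ on $S$ for fixed $p \in S^\sharp$ and $b \in M^z$. For this, Theorem~\ref{thm:SFT}(b) applied to $\varphi(y \cdot x, b)$ produces a dual $\varphi$-formula $e^b_p(x)$ over $M$ with $d^b_r(p) = e^b_p(r)$ for all $r \in S$, giving the required continuity. The main obstacle throughout is bookkeeping between the two asymmetric type spaces $S$ and $S^\sharp$: stability is available only for $\varphi$ and its left-translates $\varphi(y \cdot x, b)$, not for $\varphi^\sharp$ itself, so every defining formula must be produced by invoking Theorem~\ref{thm:SFT} on $\varphi(y \cdot x, b)$, and the identification $\sigma_q(q_g) = qg$ — which turns (i) into a clean composition formula and drives the reduction for (ii) — must be the intended reading of the symbol ``$qg$'' for the whole lemma to hold together.
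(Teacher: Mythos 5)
Your proposal is correct and follows essentially the same route as the paper: you construct the same map $p \mapsto \sigma_p$ (the paper's $\Sigma$), verify it is a continuous $G$-equivariant bijection onto $E(S)$ via density of realized types and determination of types by their values on instances, and use Theorem~\ref{thm:SFT}$(b)$ applied to $\varphi(y\cdot x,b)$ to get $(ii)$. The only cosmetic difference is that the paper realizes $E(S)$ inside $X^S$ (with $X$ the space of real-valued functions on $M^z$), so that $\sigma_p(q)\in S$ comes for free from $S^S$ being closed in $X^S$, whereas you establish this by a net argument.
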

\begin{proof}
Let $X$ be the space of real-valued functions on $M^z$. Note that $S$ can be identified as a closed subset of $X$. So $S^S$ is closed in $X^S$, and  $E(S)$ is the closure of $\{\breve{g}:g\in G\}$ in $X^S$ (recall $\breve{g}$ denotes the $g$-action map on $S$). Define a function $\Sigma \colon p\mapsto \sigma_p$ from $S^\sharp$ to $X^S$ so that, given $q\in S$ and $b\in M^z$, $\sigma_p(q)[b]=d^{b}_q(p)$.

We first show $\Sigma$ is continuous. By definition of the topology on $X^S$, it suffices to consider an open set $W\seq X^S$ of the form $\{\sigma\in X^S:\sigma(q)[b]\in I\}$ for some fixed $q\in S$, $b\in M^z$, and open interval $I\seq \R$. For $p\in S^\sharp$, we have $\sigma_p\in W$ if and only if $d^b_q(p)\in I$, and hence $\Sigma\inv(W)$ is the open set in $S^\sharp$ determined by $d^b_q(x)\in I$. 

Now note that $\Sigma(\tp_{\varphi^\sharp}(g/M))=\breve{g}$ for any $g\in G$.
Since the realized types are dense in $S^\sharp$, and $\Sigma$ is  continuous  from the compact space $S^\sharp$ to the Hausdorff space $X^S$, it follows that $\Sigma(S^\sharp)$ is the closure of $\{\breve{g}:g\in G\}$, i.e., $\Sigma(S^\sharp)=E(S)$. In particular, if $p\in S^\sharp$ and $q\in S$ then $\sigma_p(q)$ is the unique type in $S$ satisfying $\varphi(\sigma_p(q),b)=d^b_q(p)$ for all $b\in M^z$.

Next we show $\Sigma$ is injective, and  thus a homeomorphism from $S^\sharp$ to $E(S)$ (since both spaces are compact Hausdorff). Fix $p\in S^\sharp$. Then for any $g\in G$ and $b\in M^z$, if $q=\tp_\varphi(g/M)$ then  $\varphi(pg,b)=d^b_{q}(p)=\varphi(\sigma_p(q),b)$
 (the first equality is clear when $p$ is a realized type in $S^\sharp$, and thus holds for all $p$ by continuity of $d^b_q(y)$).  
 Hence $p$ is uniquely determined by $\sigma_p$. 

Finally, we show that $\Sigma$ preserves the action of $G$. Fix $g\in G$, $q\in S$, and $b\in M^z$. We need to show $\varphi(\sigma_{gp}(q),b)=\varphi(g\sigma_p(q),b)$ for any $p\in S^\sharp$. Fix $c\in M^z$ such that $\varphi(gx,b)=\varphi(x,c)$. Then $d^b_q(gy)=d^c_q(y)$, and so for any $p\in S^\sharp$, we have 
\[
\varphi(\sigma_{gp}(q),b)=d^b_q(gp)=d^c_q(p)=\varphi(\sigma_p(q),c)=\varphi(g\sigma_p(q),b).
\] 

We have now shown that $\Sigma$ is the desired $G$-flow isomorphism from $S^\sharp$ to $E(S)$. It remains to verify  $(i)$ and $(ii)$. Statement $(i)$ is a straightforward calculation.  
For $(ii)$, fix $p\in S^\sharp$ and let $L_p$ denote $q\mapsto p\ast q$. Fix $g\in G$, $b\in M^z$, and an open interval $I\seq \R$, and consider the sub-basic open set $W=\{q\in S^\sharp:\varphi(qg,b)\in I\}$. Let $\psi(x,y)=\varphi(y\cdot x,b)$, and recall that $\psi(x,y)$ is stable. Applying Theorem \ref{thm:SFT}$(b)$ and $(i)$, we have that for $q\in S^\sharp$,
\[
L_p(q)\in W\miff \varphi((p\ast q)g,b)\in I\miff d^{\psi}_{qg}(p)\in I\miff d^{\psi^*}_p(qg)\in I.
\]
So $L_p\inv(W)$ is the open set in $S^\sharp$ determined by $d^{\psi^*}_p(xg)\in I$. 
\end{proof}

 \newpage

\begin{corollary}\label{cor:EN}
$~$
\begin{enumerate}[$(a)$]
\item $S^\sharp$ has a unique minimal subflow $K$.
\item $K$ is the unique left ideal in $(S^\sharp,\ast)$
\item $(K,\ast)$ is a compact Hausdorff group.
\item The identity in $K$ commutes with every element of $S^\sharp$.
\item There is a unique left-invariant $\varphi^\sharp$-Keisler measure on $G$.
\end{enumerate}
\end{corollary}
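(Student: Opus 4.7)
The plan is to deduce all five parts of the corollary from the Ellis--Nerurkar theorem (Theorem \ref{thm:EN}) applied to $S$, transferring its conclusions through the $G$-flow and semigroup isomorphism $\Sigma\colon S^\sharp \to E(S)$ established in Lemma \ref{lem:ESid}. Parts $(a)$--$(d)$ of Theorem \ref{thm:EN} translate verbatim along $\Sigma$ to give $(a)$--$(d)$ of the corollary, and $(e)$ will follow from unique ergodicity of $E(S)$ once regular Borel probability measures on $S^\sharp$ are identified with Keisler $\varphi^\sharp$-measures on $G$ (as recalled in Section \ref{sec:CL}), noting that $G$-invariance of such a measure corresponds exactly to left-invariance. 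The only hypothesis that requires checking is that $S$ is \emph{weakly almost periodic}, i.e., every $\sigma\in E(S)$ is continuous as a map $S\to S$.

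To verify this, fix $p \in S^\sharp$, which under $\Sigma$ corresponds to $\sigma_p\in E(S)$ determined by $\varphi(\sigma_p(q),b)=d^b_q(p)$ for $q\in S$ and $b\in M^z$. Since the topology on $S$ is generated by the maps $r\mapsto\varphi(r,b)$ for $b\in M^z$, continuity of $\sigma_p\colon S\to S$ reduces to continuity of $q\mapsto d^b_q(p)$ on $S$ for each fixed $b$. I would use the same device as in the proof of Lemma \ref{lem:ESid}$(ii)$: the auxiliary formula $\psi(x,y)\coloneqq\varphi(y\cdot x,b)$ is stable (by stability of $\varphi$ combined with left-invariance), so Theorem \ref{thm:SFT}$(b)$ applies and yields $d^b_q(p)=d^\psi_q(p)=d^{\psi^*}_p(q)$. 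Because $\psi^*(y,a)=\varphi(ya,b)$ is itself a $\varphi$-formula in $y$ (again by left-invariance), the $\psi^*$-formula $d^{\psi^*}_p(y)$ descends to a continuous function on $S_\varphi(M)=S$, delivering the desired continuity.

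With weak almost periodicity established, Theorem \ref{thm:EN}$(a)$--$(d)$ transfers through $\Sigma$ to produce the unique minimal subflow $K\leq S^\sharp$, its characterization as the unique left ideal of $(S^\sharp,\ast)$, the compact Hausdorff group structure on $(K,\ast)$, and the centrality of the identity of $K$ in $S^\sharp$. For $(e)$, Theorem \ref{thm:EN}$(e)$ furnishes a unique $G$-invariant regular Borel probability measure on $E(S)$, which pulls back along $\Sigma$ to a unique $G$-invariant regular Borel probability measure on $S^\sharp$, that is, to a unique left-invariant $\varphi^\sharp$-Keisler measure on $G$. The main obstacle is precisely the WAP verification: $\varphi^\sharp$ is not assumed (and in general is not) stable, so one cannot invoke symmetry of definition for it directly; the whole argument hinges on fixing the parameter $b$ so as to work with the stable ``sliced'' formula $\psi(x,y)=\varphi(yx,b)$.
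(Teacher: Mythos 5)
Your overall strategy is sound and differs mildly from the paper's: the paper applies Theorem \ref{thm:EN} directly to the flow $S^\sharp$, using Fact \ref{fact:E=EE} to identify $(S^\sharp,\ast)$ with $(E(S^\sharp),\circ)$ so that the continuity hypothesis is already supplied by Lemma \ref{lem:ESid}$(ii)$; you instead apply Theorem \ref{thm:EN} to $S$ and transfer the conclusions about $E(S)$ through $\Sigma$. Both routes terminate at the same object $S^\sharp\cong E(S)$, so your transfer of $(a)$--$(e)$ is fine, but your route requires a fresh verification that every $\sigma_p\in E(S)$ is continuous. The paper notes in Remark \ref{rem:SWAP} that $S$ is weakly almost periodic by the same method, so your path is legitimate, just slightly longer.

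There is, however, a real error in your WAP verification, even though its conclusion is correct. You claim that $\psi^*(y,a)=\varphi(ya,b)$ is a $\varphi$-formula in $y$ ``by left-invariance,'' and that $d^{\psi^*}_p$ is a $\psi^*$-formula. Both statements are wrong. Left-invariance absorbs \emph{left} translations, $\varphi(gy,b)=\varphi(y,c)$; right translates $\varphi(ya,b)$ are in general only $\varphi^\sharp$-formulas, and the possibility that they are not $\varphi$-formulas is precisely why $\varphi^\sharp$ may fail to be stable (a point the paper stresses). Moreover, by Theorem \ref{thm:SFT}$(a)$ applied to the stable formula $\psi^*$, the defining scheme $d^{\psi^*}_p$ is a $(\psi^*)^*$-formula, i.e.\ a $\psi$-formula in the variable $x$, not a $\psi^*$-formula. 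The correct justification for the step you need is: $d^{\psi^*}_p$ is a $\psi$-formula, and $\psi$-formulas \emph{are} $\varphi$-formulas, since each instance $\psi(x,a)=\varphi(ax,b)$ equals $\varphi(x,c)$ for some $c$ by left-invariance; hence $d^{\psi^*}_p$ is a continuous function on $S_\varphi(M)=S$, and $q\mapsto d^b_q(p)=d^{\psi^*}_p(q)$ is continuous. So the argument closes, but the reasoning in your final sentence should be repaired.
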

\begin{proof}
By Lemma \ref{lem:ESid} and Fact \ref{fact:E=EE}, $(S^\sharp,\ast)$ is isomorphic to $(E(S^\sharp),\circ)$ via the map taking $p$ in $S^\sharp$ to $q\mapsto p\ast q$ in $E(S^\sharp)$. So every function in $E(S^\sharp)$ is continuous by Lemma \ref{lem:ESid}$(ii)$. The claims now follow from Theorem \ref{thm:EN}. 
\end{proof}

\begin{remark}\label{rem:SWAP}
The previous arguments establish weak almost periodicity of $S^\sharp$ via work of Ellis and Nerurkar \cite{EllNer}. However, as previously mentioned, the general connection between stability and weak almost periodicity is well known in the model theory literature (e.g., \cite{BYGro,BYTs,HrKrP1}) and goes back to Grothendieck \cite{GroWAP}. Indeed, one can directly verify the definition of weak almost periodicity for $S^\sharp$ using \cite[Th\'{e}or\`{e}me 6]{GroWAP} and the small exercise that $\theta(x\cdot y)$ is stable for any $\varphi^\sharp$-formula $\theta(x)$. By going this route, one obtains a proof of part $(ii)$ of Lemma \ref{lem:ESid} using \cite{EllNer} and \cite{GroWAP} instead of Theorem \ref{thm:SFT}$(b)$.

   Note  that the same methods establish weak almost periodicity of $S$. Indeed, one can either check this directly using the definition and Grothendieck, or  use Theorem \ref{thm:SFT}$(b)$ in the same way to prove continuity of the maps $\sigma_p$ from the proof of Lemma \ref{lem:ESid}. More generally, if $f\colon S_1\to S_2$ is a surjective homomorphism of $G$-flows, and $S_1$ is weakly almost periodic, then so is $S_2$. In our case, the restriction map from $S^\sharp$ to $S$ is such a homomorphism. (In fact, any $G$-flow with a dense orbit admits a surjective homomorphism from its Ellis semigroup.)
 \end{remark}

For the rest of this section, we let $K$ denote the unique minimal subflow of $S^\sharp$.

\begin{definition}
Let $u$ be the identity in $K$, and define $\pi\colon G\to K$ by $\pi(g)=gu$. 
\end{definition}

\begin{proposition}\label{prop:pi-ext}
The map $\pi\colon G\to K$ is a $\varphi^\sharp$-definable compactification of $G$. Moreover, $p\mapsto p\ast u$ is the  unique continuous extension of $\pi$ to $S^\sharp$, and this map is a semigroup homomorphism.
\end{proposition}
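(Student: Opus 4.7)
The plan is to verify the three assertions---that $E\colon S^\sharp \to K$ defined by $E(p) = p \ast u$ extends $\pi$ continuously (and uniquely), that $E$ is a semigroup homomorphism, and that $\pi$ is a $\varphi^\sharp$-definable compactification---by a mixture of direct calculation in $S^\sharp$ and appeal to the structural facts in Corollary \ref{cor:EN}.

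First I would show that for $g \in G$, identified with $\tp_{\varphi^\sharp}(g/M) \in S^\sharp$, one has $g \ast p = gp$ for every $p \in S^\sharp$, where $gp$ denotes the $G$-flow action. Under the isomorphism $\Sigma\colon S^\sharp \to E(S)$ of Lemma \ref{lem:ESid}, the realized type of $g$ corresponds to $\breve{g}$, and both $g \ast p$ and $gp$ correspond to $\breve{g} \circ \sigma_p$. Specializing $p = u$ gives $E(g) = g \ast u = gu = \pi(g)$, so $E$ extends $\pi$. Continuity of $E$ is continuity of right multiplication by $u$, which holds because $(S^\sharp, \ast) \cong (E(S), \circ)$ is a right topological semigroup (as noted before Fact \ref{fact:E=EE}). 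Uniqueness is then immediate from density of the realized types in $S^\sharp$ and Hausdorffness of $K$.

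Next, for the semigroup homomorphism property, a direct computation suffices:
\[
(p \ast u) \ast (q \ast u) = p \ast (u \ast q) \ast u = p \ast (q \ast u) \ast u = p \ast q \ast (u \ast u) = (p \ast q) \ast u,
\]
using associativity, the centrality of $u$ in $S^\sharp$ (Corollary \ref{cor:EN}(d)), and the identity law $u \ast u = u$ in the group $(K, \ast)$ (Corollary \ref{cor:EN}(c)). Restricting to realized types gives the group homomorphism property $\pi(gh) = \pi(g) \ast \pi(h)$, and by construction $\pi(G) \subseteq K$.

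Finally, for the compactification claim, density of $\pi(G) = Gu$ in $K$ is immediate: $\overline{Gu}$ is a closed $G$-invariant subset of the minimal subflow $K$ containing $u$, hence equals $K$ by Corollary \ref{cor:EN}(a). The $\varphi^\sharp$-definability is a matter of unwinding: any continuous $f\colon K \to \R$ pulls back along the continuous map $E$ to a continuous function $f \circ E$ on $S^\sharp$, which is by definition a $\varphi^\sharp$-formula over $M$, and whose restriction to realized types is precisely $f \circ \pi$. The step I expect to require the most care is the identification $g \ast p = gp$ for realized $g$, since this is the one place where the semigroup and flow structures on $S^\sharp$ must be explicitly reconciled; everything else is formal manipulation using Corollary \ref{cor:EN} and the right topological structure.
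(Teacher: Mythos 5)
Your proposal is correct and follows essentially the same route as the paper: continuity from the right-topological structure of $S^\sharp$, density of $\pi(G)$ from minimality of $K$ (Corollary \ref{cor:EN}$(a)$), and the homomorphism property from centrality of $u$ together with $u\ast u=u$ (Corollary \ref{cor:EN}$(c),(d)$). The one small point you leave implicit is that $p\ast u\in K$ for \emph{every} $p\in S^\sharp$ (not just realized $p$), which the paper handles by citing Corollary \ref{cor:EN}$(b)$ ($K$ is the unique left ideal); your setup also yields it, since $E(S^\sharp)\subseteq\overline{E(G)}=\overline{Gu}\subseteq K$ by continuity, density of realized types, and closedness of $K$, but it is worth stating.
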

\begin{proof}
By Corollary \ref{cor:EN}$(b)$, $p\mapsto p\ast u$ is a well-defined map from $S^\sharp$ to $K$, which clearly extends $\pi$ (viewed as a map on the dense set of realized types in $S^\sharp$). This map is also continuous (just because $S^\sharp$ is a right topological semigroup). So $\pi$ is $\varphi^\sharp$-definable. Note that $\pi(G)$ is the orbit of $u$, which is dense in $K$ by Corollary \ref{cor:EN}$(a)$. So to finish the proof, we just need to show that $p\mapsto p\ast u$ is a semigroup homomorphism. For this, fix $p,q\in S^\sharp$. Then
\[
p\ast q\ast u=p\ast q\ast u\ast u=p\ast u\ast q\ast u
\]
where the second equality uses Corollary \ref{cor:EN}$(d)$. 
\end{proof}

In light of the previous corollary, we will continue to use $\pi$ to denote the map $p\mapsto p\ast u$ from $S^\sharp$ to $K$. We also let $\mu$ denote the unique left-invariant Keisler $\varphi^\sharp$-measure on $G$ (which exists by Corollary \ref{cor:EN}$(e)$).

\begin{proposition}\label{prop:domination}
Let $\eta$ be the Haar functional on  $K$. Then $\mu(\theta(x))=\eta(\theta|_K)$ for any $\varphi^\sharp$-formula $\theta(x)$. In particular, $\mu(K)=1$.
\end{proposition}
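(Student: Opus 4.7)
The plan is to construct a candidate Keisler $\varphi^\sharp$-measure on $G$ by pushing forward the Haar functional $\eta$ along the inclusion $K\hookrightarrow S^\sharp$, and then invoke the uniqueness statement in Corollary \ref{cor:EN}$(e)$. Concretely, define a Borel probability measure $\nu$ on $S^\sharp$ by $\nu(A)=\eta(A\cap K)$ for Borel $A\seq S^\sharp$; equivalently, for any $\varphi^\sharp$-formula $\theta(x)$, set $\nu(\theta(x))=\eta(\theta|_K)$. Regularity of $\nu$ follows from regularity of $\eta$ on the closed set $K\seq S^\sharp$, and $\nu(S^\sharp)=\eta(K)=1$, so $\nu$ is a Keisler $\varphi^\sharp$-measure on $G$. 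If I can show $\nu$ is left-invariant, then uniqueness gives $\mu=\nu$, which yields both conclusions at once: $\mu(\theta(x))=\nu(\theta(x))=\eta(\theta|_K)$, and $\mu(K)=\nu(K)=\eta(K)=1$.

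The heart of the argument is identifying the $G$-action on $K$. First, I would observe that the $G$-action on $S^\sharp$ is given by $g\cdot p=\tp_{\varphi^\sharp}(g/M)\ast p$; this holds for realized $p$ essentially by definition, and extends to all of $S^\sharp$ either by continuity of $q\mapsto \tp_{\varphi^\sharp}(g/M)\ast q$ (Lemma \ref{lem:ESid}$(ii)$) together with density of realized types, or directly by transferring across $\Sigma$, since the $G$-action on $E(S)$ is left composition with $\breve{g}$ and $\Sigma(\tp_{\varphi^\sharp}(g/M))=\breve{g}$. Next, for $p\in K$ we have $u\ast p=p$, and since $u$ commutes with every element of $S^\sharp$ by Corollary \ref{cor:EN}$(d)$, we can write
\[
g\cdot p \;=\; \tp_{\varphi^\sharp}(g/M)\ast p \;=\; \tp_{\varphi^\sharp}(g/M)\ast u\ast p \;=\; \pi(g)\ast p.
\]
Thus the $G$-action on $K$ factors through $\pi$ as left multiplication in the compact Hausdorff group $(K,\ast)$.

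Left-invariance of $\nu$ now follows formally. For $g\in G$ and a Borel set $A\seq S^\sharp$, the fact that $K$ is a $G$-subflow gives $g\inv\cdot K=K$, hence $(g\inv\cdot A)\cap K=g\inv\cdot(A\cap K)=\pi(g)\inv\ast(A\cap K)$, and the left-invariance of the Haar functional $\eta$ on $K$ under multiplication by $\pi(g)\in K$ yields
\[
\nu(g\inv\cdot A)=\eta\bigl(\pi(g)\inv\ast(A\cap K)\bigr)=\eta(A\cap K)=\nu(A).
\]
By Corollary \ref{cor:EN}$(e)$, $\nu=\mu$, and both conclusions of the proposition follow. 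The only step with any real content is the identification of the $G$-action on $K$ with left multiplication by $\pi(g)$; once that is in hand, the argument is essentially a transport of left-invariance from $\eta$ to $\nu$ via uniqueness.
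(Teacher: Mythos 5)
Your proof is correct and takes essentially the same approach as the paper: define a candidate Keisler $\varphi^\sharp$-measure by pushing forward the Haar functional on $K$, verify left-invariance via the identification of the $G$-action on $K$ with left multiplication by $\pi(g)=gu$ (the paper phrases this dually as $(g\theta)|_K = gu\ast\theta|_K$), and conclude by uniqueness. Your route to $\mu(K)=1$ — working directly at the level of Borel measures rather than deriving it afterward from regularity and Urysohn — is a minor streamlining but not a different argument.
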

\begin{proof}
The second claim follows from the first by regularity of $\mu$ and Urysohn's Lemma.
For the first claim, it suffices by uniqueness of $\mu$  to show that the map $\theta(x)\mapsto \eta(\theta|_K)$ is a left-invariant Keisler $\varphi^\sharp$-measure on $G$. It is clear that this map is a linear functional. Left-invariance  follows from left-invariance of $\eta$, and since $(g\theta)|_K=gu\ast \theta|_K$ for any $\varphi^\sharp$-formula $\theta(x)$ and any $g\in G$. 
\end{proof}

The next definition is a variation of  ``almost constant" (defined before Theorem \ref{thm:SARF}), which will be more manageable in later arguments with ultraproducts.

\begin{definition}\label{def:nearly}
Let $\theta(x)$ be a $\varphi^\sharp$-formula. Fix a Borel set $B\seq S_{\varphi^\sharp}(M)$ and some $r\in\R$. Then $\theta(x)$ is \textbf{$\mu$-nearly $\epsilon$-close to $r$ on $B$} if  $\mu(\{p\in B:\theta(p)\not\approx_\epsilon r\})=0$. 
\end{definition}

We can now prove the main result of this section.

\begin{theorem}\label{thm:mainUP}
Let $\theta(x)$ be a $\varphi^\sharp$-formula and fix some $\epsilon>0$. Then there is an open identity neighborhood $U\seq K$ such that for any $g\in G$, $\theta(x)$ is $\mu$-nearly $\epsilon$-close to $\theta(gu)$ on $g\pi\inv (U)$.
\end{theorem}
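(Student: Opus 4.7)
The plan is to reduce the problem to the compact Hausdorff group $K$, on which $\theta$ restricts to a continuous (hence uniformly continuous) function, and then transfer back to $S^\sharp$ using the fact that $\mu$ is concentrated on $K$ by Proposition \ref{prop:domination}.

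First I would invoke uniform continuity of $\theta|_K$ on the compact group $K$ to produce an open identity neighborhood $U\seq K$ such that $|\theta(k\ast u')-\theta(k)|\leq\epsilon$ for all $k\in K$ and all $u'\in U$. Next, fixing $g\in G$, the core calculation is to verify the identification
\[
K\cap g\pi\inv(U)=gu\ast U.
\]
For the inclusion $\supseteq$, given $q=gu\ast u'$ with $u'\in U$, I would set $p=g\inv q\in S^\sharp$ (via the $G$-action) and compute
\[
\pi(p)=\pi(g)\inv\ast\pi(q)=(gu)\inv\ast q=u'\in U,
\]
using that $\pi$ is a semigroup homomorphism (Proposition \ref{prop:pi-ext}), that $\pi|_K=\mathrm{id}_K$ because $u$ is the identity of $K$, and that $\pi(g\inv)=\pi(g)\inv$ (since $\pi(g)\ast\pi(g\inv)=\pi(e)=u$). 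Hence $q=gp\in g\pi\inv(U)$. For the reverse inclusion, any $q\in K$ of the form $gp$ with $\pi(p)\in U$ satisfies
\[
q=\pi(q)=\pi(gp)=\pi(g)\ast\pi(p)=gu\ast\pi(p)\in gu\ast U.
\]

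Combining, every $q\in K\cap g\pi\inv(U)$ has the form $gu\ast u'$ with $u'\in U$, so by the choice of $U$, $|\theta(q)-\theta(gu)|\leq\epsilon$. Therefore the Borel set
\[
B=\{p\in g\pi\inv(U):\theta(p)\not\approx_\epsilon\theta(gu)\}
\]
is disjoint from $K$. Since Proposition \ref{prop:domination} gives $\mu(K)=1$, we have $\mu(S^\sharp\setminus K)=0$, and therefore $\mu(B)=0$, which is exactly the claim that $\theta(x)$ is $\mu$-nearly $\epsilon$-close to $\theta(gu)$ on $g\pi\inv(U)$.

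The main obstacle is the careful bookkeeping in verifying the identification $K\cap g\pi\inv(U)=gu\ast U$, which requires tracking the interplay between the $G$-action on $S^\sharp$ (given by left $\ast$-multiplication by realized types, as in Lemma \ref{lem:ESid}), the semigroup homomorphism $\pi$, and the compact group structure on $K$. Once this identification is in hand, everything else follows readily from uniform continuity and the fact that $\mu$ is supported on $K$.
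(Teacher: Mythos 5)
Your proposal is correct and follows essentially the same strategy as the paper: choose $U$ via (uniform) continuity of $\theta|_K$, exploit Proposition \ref{prop:pi-ext} to relate the $G$-action, $\pi$, and the $\ast$-operation, and conclude via $\mu(K)=1$ from Proposition \ref{prop:domination}. The only cosmetic difference is organizational: you explicitly establish the identity $K\cap g\pi\inv(U)=gu\ast U$ and then observe the bad set avoids $K$, whereas the paper argues directly that any $p\in g\pi\inv(U)$ with $\theta(p)\not\approx_\epsilon\theta(gu)$ satisfies $p\ast u\neq p$ and hence lies outside $K$; these are two packagings of the same computation.
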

\begin{proof}
Recall that $\theta(x)$ is a continuous function on $S_{\varphi^\sharp}(M)$ and hence restricts to a continuous function the compact Hausdorff group $K$. So by Proposition \ref{prop:continuous} there is an open identity neighborhood $U\seq K$ such that $\theta(x)$ is $\epsilon$-constant on all left translates of $U$ in $K$. Now fix $g\in G$. We  show that $\theta(x)$ is $\mu$-nearly $\epsilon$-close to $\theta(gu)$ on $g\pi\inv (U)$. By Proposition \ref{prop:domination}, it suffices to fix $p\in g\pi\inv(U)$ with $\theta(p)\not\approx_\epsilon\theta(gu)$, and show $p\not\in K$. So fix such $p$. Then, using Proposition \ref{prop:pi-ext}, we have
\[
p\ast u=\pi(p)=\pi(g)\ast\pi(g\inv p)\in \pi(g)\ast U\mand gu=\pi(g)\ast u\in \pi(g)\ast U.
\]
Therefore $\theta(p\ast u)\approx_\epsilon\theta(gu)$ by choice of $U$, and so $\theta(p\ast u)\neq \theta(p)$ by choice of $p$. In particular, $p\ast u\neq p$, and hence $p\not\in K$ since $u$ is the identity in $K$.  
\end{proof}

Next we use the Peter-Weyl Theorem to specialize the previous result to unitary groups. For some ad hoc notation, let $B^n_\delta$ denote the open identity neighborhood of radius $\delta$ in $\U(n)$. 

\begin{corollary}\label{cor:mainUP}
Let $\theta(x)$ be a $\varphi^\sharp$-formula and fix some $\epsilon>0$. Then there are $n\geq 0$, $\delta>0$, and a $\varphi^\sharp$-definable homomorphism $\tau\colon G\to \U(n)$ such that for any $g\in G$, $\theta(x)$ is $\mu$-nearly $\epsilon$-close to $\theta(gu)$ on $g\tau\inv (B^n_\delta)$.\footnote{As with the map $\pi$ in Theorem \ref{thm:mainUP}, here we identify $\tau$ with its unique continuous extension to $S_{\varphi^\sharp}(M)$ so that $\tau\inv(B^n_\delta)$ make sense in the context of Definition \ref{def:nearly}.}
\end{corollary}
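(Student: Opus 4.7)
The plan is to extract a unitary representation of $K$ from the Peter-Weyl theorem and compose it with $\pi$ to obtain the desired homomorphism $\tau$, then transfer the conclusion of Theorem \ref{thm:mainUP} along this composition.

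First I would apply Theorem \ref{thm:mainUP} directly to $\theta$ and $\epsilon$ to obtain an open identity neighborhood $U \subseteq K$ such that for every $g \in G$, $\theta(x)$ is $\mu$-nearly $\epsilon$-close to $\theta(gu)$ on $g\pi^{-1}(U)$. The goal then becomes to replace $\pi^{-1}(U)$ by a set of the form $\tau^{-1}(B^n_\delta)$ contained in it, at no cost (since moving to a subset only shrinks the set on which we require near-closeness, and $\mu$ is monotone).

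Next I would invoke the Peter-Weyl Theorem to produce the unitary representation. Since $K$ is a compact Hausdorff group, the continuous finite-dimensional unitary representations of $K$ separate points, and indeed separate the identity from any closed set not containing it. Applied to the closed set $K \setminus U$, this yields $n \geq 0$, $\delta > 0$, and a continuous homomorphism $\rho \colon K \to \U(n)$ with $\rho^{-1}(B^n_\delta) \subseteq U$. (Concretely, for each $k \in K\setminus U$ pick a representation separating $k$ from the identity; by compactness of $K \setminus U$ finitely many suffice, and one takes their direct sum, then chooses $\delta$ small enough.)

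I would then set $\tau = \rho \circ \pi \colon G \to \U(n)$. This is a homomorphism, and its unique continuous extension to $S^\sharp$ is the map $p \mapsto \rho(p \ast u)$, which is continuous because $p \mapsto p\ast u$ is continuous on $S^\sharp$ (by Proposition \ref{prop:pi-ext}) and $\rho$ is continuous on $K$; hence $\tau$ is $\varphi^\sharp$-definable, so $\tau^{-1}(B^n_\delta)$ makes sense as a Borel subset of $S^\sharp$. By construction,
\[
\tau^{-1}(B^n_\delta) = \pi^{-1}(\rho^{-1}(B^n_\delta)) \subseteq \pi^{-1}(U),
\]
so for any $g \in G$ we have $g\tau^{-1}(B^n_\delta) \subseteq g\pi^{-1}(U)$, and hence the $\mu$-nearly $\epsilon$-closeness of $\theta(x)$ to $\theta(gu)$ on the larger set immediately implies the same on the smaller one.

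This is essentially a formal consequence of Theorem \ref{thm:mainUP} together with Peter-Weyl, so I do not expect a genuine obstacle; the only point that requires any care is confirming that the composition $\rho \circ \pi$ is $\varphi^\sharp$-definable in the sense needed to apply Definition \ref{def:nearly}, which amounts to the continuity of $p \mapsto \rho(p \ast u)$ noted above.
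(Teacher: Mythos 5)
Your proof is correct and takes essentially the same approach as the paper: invoke Theorem \ref{thm:mainUP} to get $U\subseteq K$, use Peter-Weyl to build a continuous homomorphism $\rho\colon K\to\U(n)$ with $\rho^{-1}(B^n_\delta)\subseteq U$, and set $\tau=\rho\circ\pi$. The only difference is cosmetic: the paper realizes $K$ as an inverse limit of compact Lie groups and embeds a suitable quotient into $\U(n)$, whereas you directly take a finite direct sum of representations separating the identity from $K\setminus U$, but both are standard packagings of Peter-Weyl and yield the same $\rho$.
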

\begin{proof}
Let $U\seq K$ be an open identity neighborhood in $K$ as in Theorem \ref{thm:mainUP}. Recall that $K$ is a compact Hausdorff group. By the Peter-Weyl Theorem, $K$ is an inverse limit $\varprojlim_I K_i$ of compact Lie groups (see  \cite[Corollary 2.43]{HofMo3}). For $i\in I$, let $\rho_i\colon K\to K_i$ be the projection map. Then there is some $i\in I$ and an open identity neighborhood $V\seq K_i$ such that $\rho_i\inv(V)\seq U$ (see \cite[Exercise 1.1.15]{RZbook}). By Peter-Weyl again, we may fix some $n\geq 0$ and a topological embedding $\iota\colon K_i\to \U(n)$ of $K_i$ as a closed subgroup of $\U(n)$ (see \cite[Theorem 6.1.2]{Kowalski-book}). Pick $\delta>0$ such that $\iota\inv(B^n_{\delta})\seq V$. Then $\tau\coloneqq \iota\rho_i\pi\colon G\to \U(n)$ is a $\varphi^\sharp$-definable homomorphism and $\tau\inv(B^n_\delta)\seq\pi\inv(U)$. The conclusion now follows from the choice of $U$.
\end{proof}

\begin{remark}\label{rem:inmodel}
In  analogy to \cite{CoLSGT}, all of the material in this section can be generalized to the following  abstract setting. Let $G$ be a group (without any explicit model-theoretic context). Suppose $\cF$ is a collection of real-valued bounded stable functions on $G$, which is closed under left-translation. Then we have the associated  ``type space" $S(\cF)$ (i.e., the space of functions $p\colon \cF\to \R$ that are finitely approximated in $G$) and notion of an ``$\cF$-formula" (i.e., a continuous function on $S(\cF)$). Moreover, $S(\cF)$ is a $G$-flow by left-invariance of $\cF$. The same arguments show that $S(\cF)$ is weakly almost periodic with Ellis semigroup $S(\cF^\sharp)$, where $\cF^\sharp$ is the set of functions of the form $f(xg)$ for $f(x)\in \cF$ and $g\in G$. Moreover, Theorem \ref{thm:mainUP}  holds for any $\cF^\sharp$-formula. In fact, since Theorem \ref{thm:SFT} holds under the weaker notion of ``stability in a model" (see, e.g.,  \cite{BYGro} or \cite[Appendix B]{BYU}), it suffices to just assume that each $f\in \cF$ is \emph{stable in $G$}, i.e., for all $\epsilon>0$ there do not exist infinite sequences $(a_i)_{i<\omega}$ and $(b_i)_{i<\omega}$ in $G$ such that $|f(a_ib_j)-f(a_jb_i)|\geq\epsilon$ for all $i<j<\omega$. 
\end{remark}

\section{The main result and corollaries}\label{sec:main}

We now have all of the tools necessary to prove Theorem \ref{thm:SARF} (the main result of the paper), which will combine Corollary \ref{cor:mainUP} with an ultraproduct construction. As was the case in \cite{CCP}, this process will be more difficult than analogous constructions in classical logic. Indeed, we will need some of the  formalism from \cite{CCP} for dealing with  discrepancies between ultralimits of Keisler measures as functionals versus as  probability measures. We will also need to deal with the fact that definable compactifications do not necessarily yield Bohr neighborhoods given  explicitly by formulas. This issue previously arose  in \cite{CPTNIP} (with Terry) where, in the setting of classical logic, we used definable ``approximate Bohr neighborhoods" that were later replaced by genuine Bohr neighborhoods using a modification of a result of Kazhdan \cite{Kazh} due to Alekseev, Glebskii, and Gordon \cite{AGG}. Here we will avoid this extra level of approximation by employing  machinery from \cite{CHP}, which uses Kazhdan's result at an earlier stage of the process, and builds definable homomorphisms to a compact Lie group into the underlying continuous logic. In fact, these tools from \cite{CHP} were  developed with the present application in mind.

\begin{proof}[\textnormal{\textbf{Proof of Theorem \ref{thm:SARF}.}}]
Suppose the result fails for the fixed parameters $\epsilon$, $\zeta$, and $\bk$ from the  statement of the theorem. Then for every $s\geq 1$ there is an amenable group $G_s$, with a left-invariant measure $\mu_s$, and a $\bk$-stable function $f_s\colon G_s\to [\nv 1,1]$, such that for any $(\delta,\U(n))$-Bohr neighborhood $B$ in $G_s$, with $\delta\inv, n\leq s$, the function $f_s$ is not $\zeta(\delta,n)$-almost $\epsilon$-constant on all translates $B$. 

Let $\cL^0$ be a continuous first-order language containing the language of groups together with a $[\nv 1,1]$-valued unary relation symbol $f$, all with trivial moduli of uniform continuity. We equip $G_s$ with the discrete metric, and  expand to an $\cL^0$-structure by interpreting the language of groups in the obvious way and interpreting $f$ as $f_s$.  Fix a nonprincipal ultrafilter $\cU$ on $\Z^+$ and let $G$ be the  ultraproduct $\prod_{\cU}G_s$. So $G$ is an $\cL^0$-structure expanding a group. Let $\varphi(x,y)$ be the $\cL^0$-formula $f(y\cdot x)$. By assumption, $\varphi(x,y)$ is $\bk$-stable in $\Th(G_s)$ for all $s\geq 1$, and thus $\varphi(x,y)$ is stable in $\Th(G)$ by Proposition \ref{prop:stableup}. Let $\mu_\varphi$ be the unique left-invariant Keisler $\varphi^\sharp$-measure on $G$, which exists by Corollary \ref{cor:EN}$(e)$.
\medskip

\noindent\textit{Claim 1.} There are $n\geq 0$, $\delta>0$, and a $\varphi^\sharp$-definable homomorphism $\tau\colon G\to \U(n)$ such that for any $g\in G$, there is some $h\in G$ such that $f(x)$ is $\mu_\varphi$-nearly $\frac{\epsilon}{4}$-close to $f(h)$ on $g\tau\inv(B^\delta_n)$.\footnote{Recall that $B^\delta_n$ denotes the open identity neighborhood of radius $\delta$ in $\T(n)$.}

\noindent\emph{Proof.} Corollary \ref{cor:mainUP} yields this statement but with $h$ a $\varphi^\sharp$-type (namely $gu$). We can fix this by applying Corollary \ref{cor:mainUP} with $\frac{\epsilon}{8}$, and then for each $g\in G$ finding $h\in G$ with $f(h)\approx_{\epsilon/8}f(gu)$.\clqed\medskip

Following \cite[Section 4.3]{CHP}, we  view $(G,\U(n),\tau)$ as a structure in a two-sorted language $\cL$ expanding the  $\cL^0$-structure on $G$ (in \cite{CHP}, $\cL$ would be denoted  $(\cL^0)^{\U(n)}_{\tau}$).    By \cite[Corollary 4.12]{CHP}, for each $s\geq 1$, there is a homomorphism $\tau_s\colon G_s\to \U(n)$ such that the $\cL$-structure $(G,\U(n),\tau)$ is canonically isomorphic to $\prod_{\cU}(G_s,\U(n),\tau_s)$.  Viewing each $\mu_s$ as a positive linear functional on $G_s$, define the ultralimit functional $\mu=\lim_{\cU}\mu_s$ (see \cite[Proposition 2.15]{CCP}). Then $\mu$ is a left-invariant Keisler measure on the $\cL$-structure $(G,\U(n),\tau)$ in the sort for $G$.

Recall the continuous connective $r\dotminus s=\max\{r-s,0\}$. Define the $\cL$-formula 
\[
\psi(x,y,z)=\min\{\delta\dotminus d(\tau(x),\tau(y)),|f(x)-f(z)|\dotminus{\textstyle\frac{\epsilon}{4}}\}.
\]

\noindent\emph{Claim 2.} For any $g\in G$ there is some $h\in G$ such that $\mu(\psi(x,g,h)>0)=0$.

\noindent\emph{Proof.} By Claim 1, it suffices to fix $g,h\in G$ and show 
\[
\textstyle \mu(\psi(x,g,h)>0)=\mu_\varphi(\{p\in g\tau\inv(B^\delta_n):|f(p)-f(h)|>\frac{\epsilon}{4}\}).
\]
Let $\Phi$ consist of the $\cL$-formulas $\varphi^\sharp(x;y,z)$ and $\chi(x;y)=d(\tau(x),\tau(y))$. So any instance of $\psi(x;y,z)$ is a $\Phi$-formula. Since $\tau$ is $\varphi^\sharp$-definable, every type in $S_{\varphi^\sharp}(G)$ has a unique extension to a type in $S_\Phi(G)$.  So the restriction of $\mu$ to $\Phi$-formulas can be identified with $\mu_\varphi$, which gives us the desired result. \clqed \medskip

In order to transfer Claim 2 through the ultraproduct, we will further expand the language as in  \cite[Section 5]{CCP}. Define $\cL^+$ to consist of $\cL$ together with, for each $\eta>0$, a new $[0,1]$-valued predicate symbol $Q_\eta(y,z)$ with a trivial modulus of uniform continuity. Expand $(G_s,\U(n),\tau_s)$ to an $\cL^+$-structure $G^+_s$ by interpreting $Q_\eta$ as the function $(g,h)\mapsto \mu_s(\psi(x,g,h)\geq \eta)$. Let $G^+=\prod_{\cU}G^+_s$, and note that the reduct of $G^+$ to $\cL$ is $(G,\U(n),\tau)$. To ease notation, we use $Q_\eta$ and $Q^s_\eta$ for the interpretations of $Q_\eta$ in $G^+$ and $G^+_s$, respectively.
\medskip

\noindent\emph{Claim 3.} For any $\eta>0$ and $g,h\in G$, $Q_\eta(g,h)\leq \mu(\psi(x,g,h)\geq\eta)$. 

\noindent\emph{Proof.} See \cite[Lemma 5.4]{CCP}, which is written for more general families of formulas, but in the context of ultraproducts of finite structures with the normalized counting measure. The proof works verbatim in our setting, and  involves expanding by additional  predicate symbols (see the start of \cite[Section 5.1]{CCP}; in our setting the family of $\cL$-formulas $\cF$ can be taken as the closure of $\{\psi(x,y,z)\}$ under the  connectives  $\alpha_D$ mentioned there). So here we are again using the fact that ultraproducts commute with reducts (or one can just work in a larger language).\clqed\medskip

Now let $c$ be the absolute constant from Proposition \ref{prop:Bohr}$(a)$, and define
\[
\textstyle \eta=\min\left\{\frac{\delta}{2},\frac{\epsilon}{4},\zeta(\frac{\delta}{2},n)\right\}.
\]
Combining Claims 2 and 3, we have 
$\sup_x\inf_y Q_{\eta}(x,y)=0$. Let $I\seq\Z^+$ be the set of $s\geq 1$ such that $\sup_x\inf_y Q^s_{\eta}(x,y)<\eta$. Then $I\in\cU$ and so, since $\cU$ is nonprincipal, we may fix some $s\in I$ such that $n,(\delta/2)\inv\leq s$. Let $B=\tau_s\inv(B^{\delta/2}_n)$, and note that $B$ is a $(\frac{\delta}{2},\U(n))$-Bohr neighborhood in $G_s$. To finish the proof, we will show that $f_s$ is $\zeta(\frac{\delta}{2},n)$-almost $\epsilon$-constant on all translates of $B$, contradicting the initial choice of $G_s$ and $f_s$. 

Fix $g\in G_s$. We need to show that $f_s$ is $\zeta(\frac{\delta}{2},n)$-almost $\epsilon$-constant on $gB$. Since $\sup_x\inf_y Q^s_\eta(x,y)<\eta$, there is some $h\in G_s$ such that $\mu_s(\psi(x,g,h)\geq \eta)<\eta$.
Let $Z$ denote the set in $G_s$ defined by $\psi(x,g,h)\geq \eta$. So $\mu_s(Z)<\eta\leq \zeta(\frac{\delta}{2},n)$.
Thus it suffices to show that $f_s$ is $\epsilon$-constant on $gB\backslash Z$. 

From the definition of $\psi(x,y,z)$, we have
\[
Z= \{x\in G_s:d(\tau_s(x),\tau_s(g))\leq\delta-\eta\text{ and }|f_s(x)-f_s(h)|\geq {\textstyle\frac{\epsilon}{4}}+\eta\}.
\] 
Note also that if $x\in gB$ then $d(\tau_s(x),\tau_s(g))<\frac{\delta}{2}\leq\delta-\eta$, and hence if $x\in gB\backslash Z$ then we must have $|f_s(x)-f_s(h)|<\frac{\epsilon}{4}+\eta\leq\frac{\epsilon}{2}$. So $f_s$ is $\epsilon$-constant on $gB\backslash Z$ by the triangle inequality, as desired.   
\end{proof}

Next we   show that when restricting Theorem \ref{thm:SARF} to certain subclasses of amenable groups, one can replace unitary Bohr neighborhoods  by ``nicer" objects. We start by recalling that any abelian group is amenable (see, e.g., \cite[Proposition 0.15]{Patbook}). So Theorem \ref{thm:SARF} and Proposition \ref{prop:Bohr}$(b)$  yield the following conclusion. 

\begin{corollary}\label{cor:abelian}
Fix  $\epsilon>0$, $\zeta\colon\R^+\times \N\to\R^+$, and $\bk\colon \R^+\to\Z^+$. Suppose $G$ is an abelian group, with left-invariant measure $\mu$, and $f\colon G\to[\nv 1,1]$ is $\bk$-stable. Then there is a $(\delta,\T(n))$-Bohr neighborhood $B$ in $G$, with $\delta\inv,n\leq O_{\bk,\zeta,\epsilon}(1)$, such that $f$ is $\zeta(\delta,n)$-almost $\epsilon$-constant on all translates of $B$. 
\end{corollary}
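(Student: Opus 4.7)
The proof will be a direct combination of Theorem \ref{thm:SARF} with Proposition \ref{prop:Bohr}$(b)$, with only one preliminary observation needed. First I would note that every abelian group is amenable: the existence of a left-invariant finitely additive probability measure on the Boolean algebra of subsets of an abelian group is a classical fact (e.g., via a Markov--Kakutani fixed point argument, as in \cite[Proposition 0.15]{Patbook}), which is also referenced in the paragraph preceding the corollary. Thus the hypotheses of Theorem \ref{thm:SARF} are satisfied by any abelian group with left-invariant measure $\mu$.

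Given this, the plan is to apply Theorem \ref{thm:SARF} to the triple $(G,\mu,f)$ with the same parameters $\epsilon$, $\zeta$, and $\bk$ to obtain a $(\delta,\U(n))$-Bohr neighborhood $B$ in $G$ with $\delta^{-1},n\leq O_{\bk,\zeta,\epsilon}(1)$ such that $f$ is $\zeta(\delta,n)$-almost $\epsilon$-constant on all translates of $B$. Then Proposition \ref{prop:Bohr}$(b)$ says that, because $G$ is abelian, the set $B$ is in fact also a $(\delta,\T(n))$-Bohr neighborhood in $G$ (with the same $\delta$ and $n$, via the same underlying homomorphism, since the $\T(n)$-metric on diagonal matrices agrees with the restriction of the operator-norm metric on $\U(n)$). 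Since the ``almost constant'' conclusion is purely a statement about the set $B$ and does not depend on whether we view it as a preimage from $\U(n)$ or from $\T(n)$, this immediately yields the corollary with the same quantitative bounds.

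Since the corollary follows so directly from the two cited results, there is no genuine obstacle in this argument; the only subtle point is verifying that Proposition \ref{prop:Bohr}$(b)$ can indeed be applied with the same metric parameters, which is built into the way the ambient metrics on $\U(n)$ and $\T(n)$ are set up in Definition \ref{def:Bohr}.
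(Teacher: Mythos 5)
Your proposal is correct and follows exactly the paper's (unstated, since the corollary is presented as an immediate consequence) argument: recall that abelian groups are amenable, apply Theorem~\ref{thm:SARF}, and then invoke Proposition~\ref{prop:Bohr}$(b)$ to reinterpret the resulting unitary Bohr neighborhood as a toral one. The extra care you take in checking that the metric on $\T(n)$ is the restriction of the $\U(n)$-metric, so the parameters $\delta,n$ carry over unchanged, is precisely the point that makes the substitution legitimate.
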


Next we consider  amenable groups of bounded exponent.

\begin{corollary}\label{cor:exponent}
Fix  $r\geq 1$, $\epsilon>0$, $\zeta\colon\Z^+\to\R^+$, and  $\bk\colon \R^+\to\Z^+$. Suppose $G$ is an amenable group of exponent $r$, with left-invariant measure $\mu$, and $f\colon G\to [\nv 1,1]$ is $\bk$-stable. Then there is a normal subgroup $H\leq G$ of index $m\leq O_{\bk,\zeta,\epsilon,r}(1)$ such that $f$ is $\zeta(m)$-almost $\epsilon$-constant on all cosets of $H$. 
\end{corollary}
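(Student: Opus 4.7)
The plan is to invoke Theorem~\ref{thm:SARF} and then use Proposition~\ref{prop:Bohr}$(c)$ to replace the resulting unitary Bohr neighborhood with a sub-Bohr neighborhood small enough to be a normal subgroup of $G$. The only subtlety is that Theorem~\ref{thm:SARF} takes $\zeta\colon\R^+\times\N\to\R^+$ while here $\zeta\colon\Z^+\to\R^+$, and moreover the final index $m=[G:H]$ depends on the output parameters of Theorem~\ref{thm:SARF}, so the conversion of $\zeta$ has to anticipate $m$.

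First I would fix the absolute constant $c>0$ from Proposition~\ref{prop:Bohr}$(a)$ and choose a threshold $\delta_r\in(0,1]$ realizing the $O_r(1)$ in Proposition~\ref{prop:Bohr}$(c)$ (so every $(\delta,\U(n))$-Bohr neighborhood in a group of exponent $r$ with $\delta\leq\delta_r$ is a normal subgroup). Then define
$$\zeta^\flat(\delta,n)=\min\{\zeta(k):k\in\Z^+,\ k\leq\max((c/\delta)^{n^2},(c/\delta_r)^{n^2})\}$$
and apply Theorem~\ref{thm:SARF} with parameters $\epsilon$, $\zeta^\flat$, and $\bk$. This yields a homomorphism $\tau\colon G\to\U(n)$ and $\delta>0$, with $\delta\inv,n\leq O_{\bk,\zeta,\epsilon,r}(1)$, such that $f$ is $\zeta^\flat(\delta,n)$-almost $\epsilon$-constant on every translate of the $(\delta,\U(n))$-Bohr neighborhood $B=\tau\inv(B^n_\delta)$.

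Next, set $\delta^*=\min(\delta,\delta_r)$ and $H=\tau\inv(B^n_{\delta^*})$. Then $H$ is a $(\delta^*,\U(n))$-Bohr neighborhood with $\delta^*\leq\delta_r$, so Proposition~\ref{prop:Bohr}$(c)$ shows that $H$ is a normal subgroup of $G$, and Proposition~\ref{prop:Bohr}$(a)$ gives
$$m\coloneqq[G:H]\leq(c/\delta^*)^{n^2}\leq\max((c/\delta)^{n^2},(c/\delta_r)^{n^2}),$$
which is $O_{\bk,\zeta,\epsilon,r}(1)$ by the bounds on $\delta,n,r$.

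Finally, for any $g\in G$, since $H\seq B$ we have $gH\seq gB$. If $E_g\seq gB$ witnesses the $\zeta^\flat(\delta,n)$-almost $\epsilon$-constancy of $f$ on $gB$, then $f$ is $\epsilon$-constant on $E_g\cap gH\seq gH$ and $\mu(gH\setminus E_g)\leq\mu(gB\setminus E_g)\leq\zeta^\flat(\delta,n)\leq\zeta(m)$, where the last inequality uses the definition of $\zeta^\flat$ and the bound on $m$. I expect the main obstacle to be precisely this mild circularity in defining $\zeta^\flat$ so that a useful error bound $\zeta(m)$ is extracted \emph{and} a uniform index bound on $m$ is preserved; minimizing $\zeta$ over all plausible values of $m$ resolves it without any new ideas beyond Theorem~\ref{thm:SARF} and Proposition~\ref{prop:Bohr}.
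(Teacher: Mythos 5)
Your proof is correct and follows essentially the same route as the paper's: define an adjusted error function (your $\zeta^\flat$ is literally equal to the paper's $\zeta^*$, since $(c/\min\{\delta,\delta_r\})^{n^2}=\max\{(c/\delta)^{n^2},(c/\delta_r)^{n^2}\}$), apply Theorem~\ref{thm:SARF}, shrink the resulting Bohr neighborhood via Proposition~\ref{prop:Bohr}$(c)$ to a normal subgroup $H\seq B$, bound the index by Proposition~\ref{prop:Bohr}$(a)$, and intersect the witnessing set with $gH$. The ``mild circularity'' you flag is exactly what the paper handles the same way, by minimizing $\zeta$ over all indices bounded a priori in terms of $\delta,n,r$.
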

\begin{proof}
Let $c$ be the absolute constant from Proposition \ref{prop:Bohr}$(a)$, and let $\gamma_r>0$ be the $O_r(1)$ parameter from Proposition \ref{prop:Bohr}$(c)$. 
Define $\zeta^*\colon\R^+\times\N\to\R^+$ so that
\[
\zeta^*(\delta,n)=\min\{\zeta(m):m\leq (c/\min\{\delta,\gamma_r\})^{n^2}\}.
\]
Now fix $G$, $\mu$, and $f$ as in the statement of the corollary.
Apply Theorem \ref{thm:SARF} with parameters $\bk$, $\zeta^*$, and $\epsilon$  to obtain a $(\delta,\U(n))$-Bohr neighborhood $B$ in $G$, with $\delta\inv,n\leq O_{\bk,\zeta,\epsilon,r}(1)$, such that $f$ is $\zeta^*(\delta,n)$-almost $\epsilon$-constant on all translates of $B$. Let $\delta_*=\min\{\delta,\gamma_r\}$ and note that we still have $\delta_*\inv\leq O_{\bk,\zeta,\epsilon,r}(1)$. Let $H$ be the $(\delta_*,\U(n))$-Bohr neighborhood in $G$ defined using the same homomorphism to $\U(n)$ that yields $B$. Then $H$ is a normal subgroup of $G$ by Proposition \ref{prop:Bohr}$(c)$, and $H\seq B$. Moreover, $H$ has index $m\leq (c/\delta_*)^{n^2}$ in $G$ by Proposition \ref{prop:Bohr}$(a)$. So $\zeta^*(\delta,n)\leq \zeta(m)$.

Now fix $g\in G$. Then there is a set $Z\seq gB$, with $\mu(Z)<\zeta^*(\delta,n)$ such that $f$ is $\epsilon$-constant on $gB\backslash Z$. Set $W=Z\cap gH$. Then $gH\backslash W\seq gB\backslash Z$, so $f$ is $\epsilon$-constant on $gH\backslash W$. Moreover, $\mu(W)\leq\mu(Z)<\zeta^*(\delta,n)\leq\zeta(m)$. Therefore $f$ is $\zeta(m)$-almost $\epsilon$-constant on all cosets of $H$. 
\end{proof}

Finally,  we consider  torsion groups. In this setting, it turns out that  unitary Bohr neighborhoods are closely connected to Bohr neighborhoods defined from a torus. This phenomenon plays an implicit role in \cite{CPTNIP} and  \cite{CoBogo} where, for a finite group $G$, the key structural ingredient is a  $\T(n)$-Bohr neighborhood in a bounded index subgroup of $G$. However, the ambient unitary representation was not evident in the setting of those papers. This motivates the following definition.

\begin{definition}\label{def:Tmn}
Fix a group $G$, a real number $0<\delta\leq 2$, and integers $n\geq 0$ and $m\geq 1$. Then a \textbf{$(\delta,n,m)$-Bohr neighborhood in $G$} is a set of the form $\tau\inv(U\cap K)$, where $\tau\colon G\to \U(n)$ is a group homomorphism, $U$ is the open identity neighborhood in $\U(n)$ of radius $\delta$, and $K$ is a normal subgroup of $\tau(G)$, which is contained in $\T(n)$ and has index $m$ in $\tau(G)$. 
\end{definition}

The next result  is a direct corollary of the Jordan-Schur Theorem for torsion subgroups of $\GL_n(\C)$. It was previously stated in  \cite[Proposition 5.4$(b)$]{CHP} in a  superficially weaker form, but follows using the same proof. 

\begin{proposition}\label{prop:Bohr-torsion}
Let $B$ be a $(\delta,\U(n))$-Bohr neighborhood in a torsion group $G$. Then $B$ contains a $(\delta,n,m)$-Bohr neighborhood in $G$ for some $m\leq O_n(1)$. 
\end{proposition}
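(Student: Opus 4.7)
The plan is to reduce the statement to the Jordan--Schur theorem for torsion linear groups by unpacking the definition of $B$ and arranging that some abelian subgroup of $\tau(G)$ sits inside a maximal torus. Writing $B = \tau^{-1}(U)$ with $\tau\colon G\to\U(n)$ a group homomorphism and $U$ the open $\delta$-ball around the identity in $\U(n)$, the hypothesis that $G$ is torsion makes $\tau(G)$ a torsion subgroup of $\GL_n(\C)$. This is exactly the setting of the classical Jordan--Schur theorem (in its torsion formulation, which does \emph{not} require $\tau(G)$ itself to be finite), and that is the single nontrivial input I will use.

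Concretely, I would first invoke Jordan--Schur to produce an abelian normal subgroup $A\trianglelefteq \tau(G)$ of index $m\le J(n)$, where $J$ is a function depending only on $n$. Since every element of $A$ is unitary, the elements of $A$ are simultaneously diagonalizable; so there exists $h\in\U(n)$ with $hAh^{-1}\subseteq \T(n)$. I then replace $\tau$ by the conjugated homomorphism $\tau'(g) \coloneqq h\tau(g)h^{-1}$. The operator norm on $\GL_n(\C)$ is invariant under unitary conjugation, so $hUh^{-1}=U$, and consequently $\tau'^{-1}(U) = \tau^{-1}(U) = B$. Setting $K \coloneqq hAh^{-1}$, we obtain a normal subgroup of $\tau'(G)$ contained in $\T(n)$ with $[\tau'(G):K] = m$. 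Then
\[
\tau'^{-1}(U\cap K) \;\subseteq\; \tau'^{-1}(U) \;=\; B,
\]
and the left-hand side is, by Definition~\ref{def:Tmn}, a $(\delta,n,m)$-Bohr neighborhood in $G$. The bound $m\le J(n)=O_n(1)$ is precisely what Jordan--Schur delivers.

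The only real content is the Jordan--Schur input; everything else is bookkeeping. The ``obstacle'' is mainly the cosmetic point that the map produced by conjugation is a different homomorphism from the original $\tau$, but because Definition~\ref{def:Tmn} allows us to choose whatever homomorphism to $\U(n)$ we wish, and because conjugation by a unitary is an isometry fixing $U$ setwise, no complications arise. This matches the remark that the proposition ``follows using the same proof'' as the superficially weaker statement in~\cite{CHP}.
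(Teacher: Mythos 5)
Your proof is correct and follows exactly the route the paper intends: the paper states the result as ``a direct corollary of the Jordan--Schur Theorem for torsion subgroups of $\GL_n(\C)$'' (with the same proof as \cite[Proposition 5.4$(b)$]{CHP}), and your reduction---apply Jordan--Schur to the torsion group $\tau(G)\leq\U(n)$, simultaneously diagonalize the abelian normal subgroup by a unitary $h$, and use that unitary conjugation preserves the operator-norm ball $U$ so that replacing $\tau$ by $h\tau h^{-1}$ leaves $B$ unchanged---is precisely that argument.
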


\begin{remark}\label{rem:nmBohr}
Let $G$ be a group, and suppose $B$ is a $(\delta,n,m)$-Bohr neighborhood in $G$, witnessed by $\tau$ and $K$.
\begin{enumerate}[$(1)$]
\item  $B$ is a $(\delta,\T(n))$-Bohr neighborhood in the normal subgroup $\tau\inv(K)$ of $G$, which has index at most $m$. So we recover the setting of \cite{CPTNIP,CoBogo} discussed above. 
\item $B$ is  ``normal" in the sense that $gB=Bg$ for any $g\in G$.\footnote{In \cite{CPTNIP}, this was erroneously claimed to hold for the weaker notion of $\T(n)$-Bohr neighborhood in a subgroup; see the supplemental note to \cite{CPTNIP} on the first author's  webpage.} 
\item Proposition \ref{prop:Bohr}$(a)$ implies that $G$ can be covered by at most $m(c/\delta)^{n^2}$ translates of $B$. But this can be improved in light of point $(1)$ above. Indeed, let $U$  be the the open identity neighborhood of radius $\delta/2$ in $\T(1)$. By \cite[Corollary 5.6]{CHP}, $G$ can be covered by $m\ell^n$ translates of $B$ where $\ell\geq 1$ is any integer with the property that $\T(1)$ can be covered by $\ell$ translates of $U$. Since $U$ is a cyclic interval of  arclength $4\sin\inv(\delta/4)>\delta$, we may take $\ell=\lceil 2\pi/\delta\rceil$.  
\end{enumerate}
\end{remark}

\begin{corollary}\label{cor:torsion}
Fix  $\epsilon>0$, $\zeta\colon\R^+\times \N\times\Z^+\to\R^+$, and $\bk\colon \R^+\to\Z^+$. Suppose $G$ is an amenable torsion group, with left-invariant measure $\mu$, and $f\colon G\to [\nv 1,1]$ is $\bk$-stable. Then there is a $(\delta,n,m)$-Bohr neighborhood $B$ in $G$, with $\delta\inv,n,m\leq O_{\bk,\zeta,\epsilon}(1)$, such that $f$ is $\zeta(\delta,n,m)$-almost $\epsilon$-constant on all translates of $B$. 
\end{corollary}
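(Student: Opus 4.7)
The plan is to mirror the strategy of Corollary \ref{cor:exponent}, using Proposition \ref{prop:Bohr-torsion} in place of Proposition \ref{prop:Bohr}$(c)$ to refine a unitary Bohr neighborhood produced by Theorem \ref{thm:SARF}. Fix a function $h\colon\N\to\Z^+$ witnessing the bound $m\leq O_n(1)$ in Proposition \ref{prop:Bohr-torsion}, and define an auxiliary function $\zeta^*\colon\R^+\times\N\to\R^+$ by
\[
\zeta^*(\delta,n)=\min\{\zeta(\delta,n,m):1\leq m\leq h(n)\}.
\]
The point of this definition is that for any $m\leq h(n)$ we have $\zeta^*(\delta,n)\leq \zeta(\delta,n,m)$, which is exactly what will be needed in the final comparison.

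Now apply Theorem \ref{thm:SARF} with parameters $\bk$, $\zeta^*$, and $\epsilon$ to obtain a $(\delta,\U(n))$-Bohr neighborhood $B_0\seq G$, with $\delta\inv,n\leq O_{\bk,\zeta,\epsilon}(1)$, such that $f$ is $\zeta^*(\delta,n)$-almost $\epsilon$-constant on all translates of $B_0$. Invoking Proposition \ref{prop:Bohr-torsion} (using the torsion hypothesis on $G$), extract a $(\delta,n,m)$-Bohr neighborhood $B\seq B_0$ for some $m\leq h(n)$; in particular, $\delta\inv,n,m\leq O_{\bk,\zeta,\epsilon}(1)$ as required.

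To verify the remaining conclusion, fix $g\in G$, let $Z\seq gB_0$ be a set with $\mu(Z)<\zeta^*(\delta,n)$ on whose complement in $gB_0$ the function $f$ is $\epsilon$-constant, and set $W=Z\cap gB$. Since $B\seq B_0$ we have $gB\setminus W\seq gB_0\setminus Z$, so $f$ is $\epsilon$-constant on $gB\setminus W$, while $\mu(W)\leq\mu(Z)<\zeta^*(\delta,n)\leq\zeta(\delta,n,m)$ by the definition of $\zeta^*$. Hence $f$ is $\zeta(\delta,n,m)$-almost $\epsilon$-constant on every translate of $B$. The argument is entirely bookkeeping and presents no real obstacle; the substantive content is packaged in Theorem \ref{thm:SARF} and Proposition \ref{prop:Bohr-torsion}, and the only care needed is the a priori modification $\zeta\mapsto\zeta^*$ so that the measure of the exceptional set for $B_0$ can be absorbed into the tolerance $\zeta(\delta,n,m)$ for $B$.
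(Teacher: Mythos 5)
Your proposal is correct and follows essentially the same route as the paper's own proof: modify $\zeta$ to $\zeta^*$ by minimizing over the bounded range of $m$ allowed by Proposition \ref{prop:Bohr-torsion}, apply Theorem \ref{thm:SARF} with $\zeta^*$, extract the $(\delta,n,m)$-Bohr neighborhood $B\seq B_0$, and then observe that the exceptional set for $B_0$ restricts to one for $B$ of measure below $\zeta(\delta,n,m)$. The paper abbreviates the last verification step by referring to the end of the proof of Corollary \ref{cor:exponent}, whereas you spell it out; otherwise the two arguments coincide.
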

\begin{proof}
Let $m\colon\N\to \N$ be the  function given by $O_n(1)$ in Proposition \ref{prop:Bohr-torsion}.
Define $\zeta^*\colon \R^+\times\N\to \R^+$ by  $\zeta^*(\delta,n)=\min\{\zeta(\delta,n,m):m\leq m(n)\}$.

Now fix $G$, $\mu$, and $f$ as in the statement of the corollary. Apply Theorem \ref{thm:SARF} with parameters $\bk$, $\zeta^*$, and $\epsilon$ to obtain a $(\delta,\U(n))$-Bohr neighborhood $B_0$ in $G$, with $\delta\inv,n\leq O_{\bk,\zeta,\epsilon}(1)$, such that $f$ is $\zeta^*(\delta,n)$-almost $\epsilon$-constant on all translates of $B_0$. By Proposition \ref{prop:Bohr-torsion}, there is a $(\delta,n,m)$-Bohr neighborhood $B\seq B_0$ with $m\leq m(n)$. So $\zeta^*(\delta,n)\leq \zeta(\delta,n,m)$. Arguing as in the end of the proof of Corollary \ref{cor:exponent}, it follows that $f$ is $\zeta(\delta,n,m)$-almost $\epsilon$-constant on all translates of $B$.
\end{proof}

\section{Applications}\label{sec:app}

In this section, we use our main  result (Theorem \ref{thm:SARF}) and  stability of convolutions (Corollary \ref{cor:conv-stable}) to prove several results in arithmetic combinatorics related to \emph{Bogolyubov's Lemma}  (discussed in Sections \ref{sec:BL} and \ref{sec:AB}), \emph{quasirandom groups} (discussed in Section \ref{sec:AB}), and the \emph{Croot-Sisask Lemma} (discussed in Section \ref{sec:CS}). Beyond  proving applications, the general aim of this section is to illustrate the fundamental role played by stable arithmetic regularity for  functions in the context of a significant body of  literature. In particular, we will draw a  thread  through the contemporaneous papers of Hrushovski \cite{HruAG}, Sanders \cite{SanBS}, and Croot and Sisask \cite{CrSi}, which together laid the foundation for the seminal results of Breuillard, Green, and Tao \cite{BGT} on the structure of approximate groups. As is now well understood, certain aspects Hrushovski's work  \cite{HruAG} can be accounted for by stability of Hilbert spaces in continuous logic (Fact \ref{fact:Hilbert}), which provides a link to the methods used here. Moreover, work of Palac\'{i}n \cite{PalPFS} establishes a direct connection between \cite{HruAG}  and results of Gowers \cite{GowQRG} on quasirandom groups.

\subsection{Bogolyubov's Lemma}\label{sec:BL}
In this subsection we use Corollary \ref{cor:conv-regularity} to give a new proof of Bogolyubov's Lemma for amenable groups (see Theorem \ref{thm:strongBL}). This result was first proved for finite abelian groups  by Ruzsa \cite{Ruz94} (with explicit bounds) using an argument based on much earlier techniques developed by Bogolyubov \cite{Bog39}. A qualitative generalization to arbitrary finite groups was later given by the first author in \cite{CoBogo} using a combinatorial argument of Sanders \cite{SanBS}. Very recently, the authors and Hrushovski \cite{CHP} proved a further generalization to arbitrary amenable groups using a variation of Hrushovski's stabilizer theorem \cite{HruAG} due to Montenegro, Onshuus, and Simon \cite{MOS}. Here we prove the version for amenable groups directly from Corollary \ref{cor:conv-regularity} and elementary covering arguments. 

\begin{theorem}\label{thm:strongBL}
Let $G$ be an amenable group with left-invariant measure $\mu$, and fix $\alpha>0$. Suppose $A\seq G$ is such that $\mu(A)\geq\alpha$. Then there is a $(\delta,\U(n))$-Bohr neighborhood $B\seq G$, with $\delta\inv,n\leq O_{\alpha}(1)$, such that $B\seq (AA\inv)^2$. 
\end{theorem}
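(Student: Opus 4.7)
The plan is to apply Corollary \ref{cor:conv-regularity} to the convolution $u = \bone_A \ast \bone_{A\inv}$, and then extract the desired Bohr neighborhood inside $(AA\inv)^2$ by a Steinhaus-type argument. The relevant features of $u$ are that it is supported in $AA\inv$, that $u(e) = \mu(A) \geq \alpha$, and that $\int u \, d\mu = \mu(A)\mu(A\inv)$ by Fubini and left-invariance. Using that every discrete amenable group admits a bi-invariant mean (built from left- and right-invariant means via $f \mapsto \mu_L(x \mapsto \mu_R(y \mapsto f(xy)))$), I may replace $\mu$ by a bi-invariant mean, in which case $\mu(A\inv) = \mu(A) \geq \alpha$ and so $\int u\,d\mu \geq \alpha^2$.

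Corollary \ref{cor:conv-regularity} then supplies a $(\delta,\U(n))$-Bohr neighborhood $B_0$ with $\delta\inv, n \leq O_\alpha(1)$ on whose translates $u$ is $\zeta(\delta,n)$-almost $\epsilon$-constant, where I prescribe $\epsilon$ of order $\alpha^2$ and $\zeta(\delta,n)$ small in terms of $\alpha$, $\delta$, $n$ in a way specified below. By Proposition \ref{prop:Bohr}(a) I may cover $G$ by disjoint sets $E_1,\dots,E_N$ with $E_i \seq g_iB_0$ and $N \leq (c/\delta)^{n^2}$. Let $v_i$ denote the approximate constant value of $u$ on $g_iB_0$. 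Since $u \leq 1$, a routine telescoping gives $\bigl|\sum_i v_i \mu(E_i) - \int u\, d\mu\bigr| \leq \epsilon + 2N\zeta$, so for $\zeta \leq \alpha^2/(8N)$ and $\epsilon \leq \alpha^2/4$ some $v_{g_0} \geq \alpha^2/2$. Taking $S \seq g_0B_0$ to be the corresponding ``good set'' (so $\mu(g_0B_0 \setminus S) \leq \zeta$ and $u(x) \geq \alpha^2/4$ on $S$), the support condition gives $S \seq AA\inv$.

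The last step is a Steinhaus-style argument to place a Bohr neighborhood inside $SS\inv$. Let $\tau\colon G \to \U(n)$ be the homomorphism defining $B_0$, and set $B = \tau\inv(B^n_{\delta/2})$, where $B^n_{\delta/2}$ is the identity ball of radius $\delta/2$ in $\U(n)$. For $x \in B$, bi-invariance of the metric on $\U(n)$ gives $g_0\inv x g_0 \in B$, and the triangle inequality in $\U(n)$ yields $\tau\inv(B^n_{\delta/2}) \seq (g_0\inv x g_0)B_0 \cap B_0$. By left-invariance and Proposition \ref{prop:Bohr}(a) applied to $\tau\inv(B^n_{\delta/2})$, this gives $\mu(xg_0B_0 \cap g_0B_0) \geq (\delta/2c)^{n^2}$. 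Choosing $\zeta < \tfrac{1}{2}(\delta/2c)^{n^2}$ in addition, inclusion-exclusion plus $\mu(xS)=\mu(S) \geq \mu(B_0) - \zeta$ forces $xS \cap S \neq \emptyset$, hence $x \in SS\inv$. Since $(AA\inv)\inv = AA\inv$, this gives $B \seq SS\inv \seq (AA\inv)^2$ as required.

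The main obstacle I foresee is the averaging step: ensuring at least one $v_i$ is truly of order $\alpha^2$ rather than being absorbed by the $\epsilon$- and $\zeta$-errors. This is exactly why Corollary \ref{cor:conv-regularity} permits $\zeta$ to be a \emph{function} of the output parameters $(\delta,n)$, allowing one to calibrate $\zeta(\delta,n)$ to lie below both the Steinhaus threshold $(\delta/2c)^{n^2}/2$ and the averaging threshold $\alpha^2/(8(c/\delta)^{n^2})$ after the complexity bounds have been produced.
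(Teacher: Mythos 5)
Your overall strategy closely parallels the paper's: apply Corollary \ref{cor:conv-regularity} to $u=\bone_A\ast\bone_{A\inv}$, locate a translate of a Bohr neighborhood on which $u$ is bounded away from zero (hence contained in $AA\inv$), and then run a Steinhaus-type covering argument to place a smaller Bohr neighborhood inside $(AA\inv)^2$. Your Steinhaus step is essentially the paper's Claim~3, just phrased via bi-invariance of the unitary metric rather than inclusion-exclusion on $B^2\seq C$, and it is correct. However, the step where you show $u$ is large on a set of positive measure has a genuine gap: the identity $\int u\,d\mu=\mu(A)\mu(A\inv)$ is a Fubini-type statement, and Fubini fails for finitely additive invariant means on infinite amenable groups. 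The paper discusses exactly this obstruction at the end of Section~\ref{sec:AB}, observing for instance that $\bone_{\N}\ast\bone_{\N}\equiv 0$ on $\Z$ for any invariant mean, even though $\mu(\N)$ can be $1$. Passing to a bi-invariant mean does not rescue the argument: it does nothing to restore Fubini (which is a statement about iterated integrals of a two-variable function, not about one-sided versus two-sided invariance), and moreover an unrelated bi-invariant mean $\mu'$ need not satisfy $\mu'(A)\geq\alpha$, so you also lose the hypothesis.

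What replaces your averaging step in the paper is a direct combinatorial argument (Claim~1 of the paper's proof), a ``density'' version of Ruzsa's covering lemma exploiting the special form $B=A\inv$: taking a maximal ``separated'' set $F\seq G$ (meaning $\mu(gA\cap hA)\leq\frac{1}{2}\alpha^2$ for distinct $g,h\in F$), one bounds $|F|\leq 2\alpha\inv$ via finite subadditivity, and then maximality forces $G=FS$ where $S=\{x:u(x)>\frac{1}{2}\alpha^2\}$; hence $\mu(S)\geq\frac{\alpha}{2}$. This makes no use of Fubini and uses only finite additivity and left-invariance. With that claim in hand, the rest of your argument (picking a translate $gC$ on which the approximate value of $u$ exceeds $\frac{1}{2}\alpha^2$, then running Steinhaus) goes through, so the gap is localized but real: you need some replacement for the integral lower bound, and the separated-set argument is the key idea you are missing.
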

\begin{proof}
Define $f\colon G\to [0,1]$ such that $f(x)=\mu(A\cap xA)$. Then one can check $f=\boldsymbol{1}_A\ast\boldsymbol{1}_{A\inv}$. The first step is to show that $f$ is bounded away from zero on a large set. In particular, set $\epsilon=\frac{1}{2}\alpha^2$ and let $S=\{x\in G:f(x)>\epsilon\}$. \medskip

\noindent\textit{Claim 1.} $G$ can be covered by at most $2\alpha\inv$ left translates of $S$. 

\noindent\textit{Proof.} Call a set $F\seq G$ \emph{separated} if $\mu(gA\cap hA)\leq \epsilon$ for all distinct $g,h\in F$. We first show that any separated set $F$ has size at most $2\alpha\inv$. For a contradiction, suppose $F\seq G$ is separated with $|F|>2\alpha\inv$. Then we may pick pairwise distinct $g_1,\ldots,g_n\in F$, with $n=\lceil\frac{2}{\alpha}\rceil$. So $\frac{2}{\alpha}\leq n<\frac{2+\alpha}{\alpha}$. Moreover,
\[
1\geq \mu\left(\bigcup_{i=1}^n g_iA\right) \geq \sum_{i=1}^n \mu(g_iA)-\sum_{1\leq i<j\leq n}\mu(g_iA\cap g_jA)\geq n\alpha-\frac{n(n-1)\epsilon}{2}.
\]
 $1\geq n\alpha\beta$ where $\beta=1-\frac{1}{4}(n-1)\alpha$ (recall $\epsilon=\frac{1}{2}\alpha^2$). Since $n\alpha\geq 2$, we must  have $2\beta\leq 1$. But one can directly check that this contradicts $n\alpha<2+\alpha$.

Now let $F\seq G$ be a separated set of maximal size. We show $G=FS$. Choose $g\in G$. Then by maximality there is some $h\in F$ such that $\mu(gA\cap hA)>\epsilon$, i.e., $\mu(h\inv gA\cap A)>\epsilon$, i.e., $h\inv g\in S$, i.e., $g\in hS$.\clqed\medskip

Now define $\zeta\colon\R^+\times\N\to \R^+$ so that
\[
\zeta(\delta,n)=\textstyle\min\{\frac{\alpha}{2}(\frac{\delta}{c})^{n^2},\frac{1}{2}(\frac{\delta}{2c})^{n^2}\},
\]
where $c$ is the absolute constant from Proposition \ref{prop:Bohr}$(a)$.
Apply Corollary \ref{cor:conv-regularity} to $f$ with parameters $\epsilon$ and $\zeta$. This yields a $(\delta,\U(n))$-Bohr neighborhood $C$ in $G$, with $\delta\inv, n\leq O_{\alpha}(1)$, such that $f$ is $\zeta(\delta,n)$-almost $\epsilon$-constant on all translates of $C$. 
\medskip

\noindent\textit{Claim 2.} There is some $g\in G$ such that $\mu(gC\backslash AA\inv)< \zeta(\delta,n)$.

\noindent\emph{Proof.} By Proposition \ref{prop:Bohr}$(a)$, there is a set $F\seq G$ of size at most $(c/\delta)^{n^2}$ such that $G=FC$. For each $g\in F$, fix $Z_g\seq gC$ such that $\mu(Z_g)<\zeta(\delta,n)$ and $f$ is $\epsilon$-constant on $gC\backslash Z_g$. Let $Z=\bigcup_{g\in F}Z_g$. Then $\mu(Z)<\zeta(\delta,n)|F|\leq\frac{\alpha}{2}$. Since $\mu(S)\geq\frac{\alpha}{2}$ by Claim 2, we may fix some $x\in S\backslash Z$. Choose $g\in F$ such that $x\in gC$, and note $x\not\in Z_g$. So given any $y\in gC\backslash Z_g$, we have $f(x)\approx_\epsilon f(y)$, and hence $f(y)>0$ since $f(x)>\epsilon$. In particular, $A\cap yA$ is nonempty for all $y\in gC\backslash Z_g$, which implies $gC\backslash Z_g\seq AA\inv$, i.e., $gC\backslash AA\inv\seq Z_g$. This yields the claim by choice of $Z_g$.  \clqed\medskip

Fix $g\in G$ as in Claim 2, and let $B$ be the $(\frac{\delta}{2},\U(n))$-Bohr neighborhood in $G$ defined from the same homomorphism to $\U(n)$ that yields $C$. We will show $B\seq (AA\inv)^2$ using the  following standard covering exercise (the proof is an elementary application of the inclusion-exclusion principle and is left to the reader).\medskip

\noindent\emph{Claim 3.} Given $X,Y\seq G$, if $1\in X$ and $\mu(X^2\backslash Y)<\frac{1}{2}\mu(X)$, then $X\seq YY\inv$. \medskip

Note that $B^2\seq C$, hence $\mu(B^2\backslash g\inv AA\inv)<\zeta(\delta,n)$ by Claim 2. Also, $\zeta(\delta,n)\leq \frac{1}{2}\mu(B)$ by Proposition \ref{prop:Bohr}$(a)$ and  choice of $\zeta$. Applying Claim 3 with $X=B$ and $Y=g\inv AA\inv$, we have $B\seq g\inv (AA\inv)^2 g$. So $B\seq (AA\inv)^2$ since $gB=Bg$. 
\end{proof}

\begin{remark}
Claim 1 is related to \emph{Ruzsa's Covering Lemma} which, in the setting of Theorem \ref{thm:strongBL}, says that if $A\seq G$ and $\mu(A)\geq\alpha>0$ then $G$ can be covered by at most $\alpha\inv$ left translates of $AA\inv$. The set $S$ in the proof of Theorem \ref{thm:strongBL} is contained in $AA\inv$, since the latter is precisely the set of $x\in G$ such that $A\cap xA\neq\emptyset$. Thus Claim 1 can be viewed as a ``density analogue" of Ruzsa's Covering Lemma. 
\end{remark}

\begin{remark}\label{rem:BL}
Using a suitable adjustment to $\zeta$ and another covering exercise similar to Claim 3, one can further show that $AAA\inv$ contains a translate of $B$ (see also Claim 3 in the proof of Theorem \ref{thm:AB} below). This recovers  the strong form of Bogolyubov's Lemma in  \cite[Theorem 5.9]{CHP}, except with $AA\inv$ almost containing a translate of a Bohr neighborhood, rather than the Bohr neighborhood itself.  See also \cite[Remark 5.11]{CHP}. 
\end{remark}

\subsection{Two-set Bogolyubov and quasirandom groups}\label{sec:AB}

In this subsection we prove a  ``two-set" version of Bogolyubov's Lemma for finite groups, which is a new result in the noncommutative setting (some discussion of related results for abelian groups is given after the proof). In fact, we will prove a more detailed statement that matches the strong form of Bogolyubov (for one set) from \cite{CHP}. Specifically, our result provides an ``almost containment" statement in a double product (condition $(i)$ of Theorem \ref{thm:AB} below).  As was the case in \cite{CHP}, from this one can deduce the traditional form of Bogolyubov in terms of full containment in a quadruple product (condition $(iii)$ below), as well as another containment statement for a triple product (condition $(ii)$). These three conditions were formulated in \cite{CHP} (for one set)  to match the form of Hrushovski's original stabilizer theorem from \cite{HruAG}. The generalization to two sets is also motivated by  results in combinatorial number theory along the lines of Jin's Theorem \cite{JinAB} (see  the discussion at the end of this subsection). Condition $(i)$ also immediately implies a  result of Gowers \cite{GowQRG} on quasirandom groups, but without quantitative bounds (see Corollary \ref{cor:Gow}).

\begin{theorem}\label{thm:AB}
Fix $\alpha>0$ and $\zeta\colon \R^+\times \N\to \R^+$. Let $G$ be a finite group and suppose $A,B\seq G$ are such that $|A|\geq \alpha|G|$ and $|B|\geq\alpha|G|$. Then there is a $(\delta,\U(n))$-Bohr neighborhood $U$ in $G$, with $\delta\inv, n\leq O_{\alpha,\zeta}(1)$, such that:
\begin{enumerate}[$(i)$]
\item $|gU\backslash AB|<\zeta(\delta,n)|G|$ for some $g\in G$,
\item $ABA\inv$ contains a translate of $U$, and
\item $U\seq AB(AB)\inv$.
\end{enumerate}
\end{theorem}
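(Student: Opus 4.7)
The plan is to mimic the proof of Theorem \ref{thm:strongBL}, now applied to the convolution $f = \bone_A \ast \bone_B$ in place of $\bone_A \ast \bone_{A\inv}$; this $f$ is $\bk$-stable by Corollary \ref{cor:conv-stable}. Unpacking the definition gives $f(x) = \mu(A \cap xB\inv)$, so $f\colon G \to [0,1]$ has support exactly $AB$, and a Fubini computation yields $\int f\,d\mu = \mu(A)\mu(B) \geq \alpha^2$. Setting $\epsilon = \frac{1}{2}\alpha^2$ and $S = \{x : f(x) > \epsilon\}$, the inequality $\int f\,d\mu \leq \epsilon + \mu(S)$ gives the lower bound $\mu(S) \geq \frac{1}{2}\alpha^2$.

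I would then invoke Corollary \ref{cor:conv-regularity} with parameter $\epsilon$ and an auxiliary function $\zeta^*(\delta,n)$, chosen small enough relative to $\zeta(\delta/2,n)$ and the measure of the forthcoming $U$. This produces a $(\delta,\U(n))$-Bohr neighborhood $C$, with $\delta\inv,n \leq O_{\alpha,\zeta}(1)$, on which $f$ is $\zeta^*(\delta,n)$-almost $\epsilon$-constant on every translate. Exactly as in Claim 2 of the proof of Theorem \ref{thm:strongBL} (cover $G$ by boundedly many translates of $C$ and use the lower bound on $\mu(S)$), I extract $g \in G$ with $\mu(gC \setminus AB) < \zeta^*(\delta,n)$. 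Letting $U$ be the $(\delta/2,\U(n))$-Bohr neighborhood from the same homomorphism, so $U \cdot U \seq C$, condition $(i)$ is immediate since $gU \seq gC$. Condition $(iii)$ follows from Claim 3 of that same proof with $X = U$ and $Y = g\inv AB$: taking $\zeta^*(\delta,n) < \frac{1}{2}\mu(U)$, we get $U \seq g\inv AB(AB)\inv g$, and normality of $U$ (i.e., $gU = Ug$) reduces this to $U \seq AB(AB)\inv$.

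Condition $(ii)$ is the technical heart and, I expect, the main obstacle. The plan is to rerun the same regularity/covering scheme on the threefold convolution $h = \bone_A \ast \bone_B \ast \bone_{A\inv}$, which is stable by Corollary \ref{cor:conv-stable}, is supported on $ABA\inv$, and satisfies $\int h\,d\mu = \mu(A)^2\mu(B) \geq \alpha^3$. This produces a translate $g'C'$ of a Bohr neighborhood with $\mu(g'C' \setminus ABA\inv)$ arbitrarily small. The remaining step --- converting almost-containment into full containment of an actual translate $hU \seq ABA\inv$ --- is where the difficulty concentrates: a naive averaging bound of the form $\mu(EV) \leq \mu(E)|V|$ on the $\zeta^*$-small bad set $E \seq g'V^2$ fails to force some translate $g'v_0V$ disjoint from $E$, since $|V|$ scales with $|G|$. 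My expectation is that this requires a covering exercise parallel in spirit to Claim 3 but targeting the ``some $hU \seq Y$" pattern rather than the $YY\inv$-pattern, exploiting both the quantitative Bohr-covering bound in Proposition \ref{prop:Bohr}$(a)$ and a chain of successively nested unitary Bohr neighborhoods sharing a common homomorphism, so that the slack $\zeta^*$ can be absorbed against the measure of the smallest nested neighborhood.
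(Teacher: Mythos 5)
Your treatment of conditions $(i)$ and $(iii)$ is correct and coincides with the paper's: apply Corollary \ref{cor:conv-regularity} to $f=\bone_A\ast\bone_B$, establish the analogue of Claim~1 by a Fubini/Markov argument (as you do), extract $g$ with $|gV\setminus AB|$ small via the covering argument of Claim~2, pass to the half-radius Bohr neighborhood $U$, and for $(iii)$ invoke the covering exercise (Claim~3 of the proof of Theorem \ref{thm:strongBL}).

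For condition $(ii)$, however, the detour through the threefold convolution $\bone_A\ast\bone_B\ast\bone_{A\inv}$ is both unnecessary and, as you correctly diagnose, not self-contained: the step from almost-containment to containment of a full translate is precisely the gap, and you would still need a covering lemma to close it. The paper instead deduces $(ii)$ from the \emph{same} almost-containment $|V\setminus g\inv AB|<\zeta^*(\delta,n)|G|$ already in hand, using a second elementary covering exercise. The exact statement you were groping for is the following: if $X\seq G$ satisfies $X=X\inv$ and $G$ can be covered by $k$ left translates of $X$, then for any $C,D\seq G$ with $|X^2\setminus D|<\frac{1}{k}|C|$, the set $CD\inv$ contains a left translate of $X$. (Proof: pick $a$ with $|C\cap aX|\geq |C|/k$; for any $y\in aX$ one has $y\inv(C\cap aX)\seq X^2$ and its cardinality exceeds $|X^2\setminus D|$, so it meets $D$, giving $y\in CD\inv$.) Applying this with $X=U$, $C=A$, $D=g\inv AB$, $k=(2c/\delta)^{n^2}$, noting $U^2\seq V$ and $\zeta^*(\delta,n)|G|\leq \alpha(\delta/2c)^{n^2}|G|\leq |A|/k$, one gets a translate of $U$ inside $A(g\inv AB)\inv = AB\inv A\inv g$; inverting and using $U=U\inv$ and the normality $gU=Ug$ lands a translate of $U$ inside $ABA\inv$. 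So the missing piece is not a new regularity application but a sharper covering lemma targeting the ``$CD\inv$ contains a translate'' pattern, and it applies directly to the data you already produced for $(i)$ and $(iii)$; you should also make sure $\zeta^*$ is chosen small enough (dominated by both $\zeta$, $\epsilon(\delta/c)^{n^2}$, and $\min\{\alpha,\tfrac12\}(\delta/2c)^{n^2}$) so that all three applications go through simultaneously.
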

\begin{proof}
The  strategy is similar to the proof of Theorem \ref{thm:strongBL}. Define $f=\boldsymbol{1}_A\ast \boldsymbol{1}_B$ (i.e, $f(x)=|A\cap xB\inv|/|G|$). We first show that $f$ is bounded away from zero on a large set. In particular, set $\epsilon=\frac{1}{2}\alpha^2$ and let $S=\{x\in G:f(x)>\epsilon\}$.\medskip

\noindent\textit{Claim 1.} $|S|\geq\epsilon|G|$. 

\noindent\textit{Proof.} The key observation is $\sum_{x\in G}f(x)=|A||B|/|G|$, which follows directly from the definition of $f$ and Fubini. Therefore we have
\[
2\epsilon|G|=\alpha^2|G|\leq \frac{|A||B|}{|G|}=\sum_{x\in G}f(x)=\sum_{x\in S}f(x)+\sum_{x\in G\backslash S}f(x)\leq |S|+\epsilon|G\backslash S|.
\]
So $|S|\geq 2\epsilon|G|-\epsilon|G\backslash S|\geq \epsilon|G|$. \clqed\medskip

Now define $\zeta^*\colon \R^+\times\Z^+\to\R^+$ so that
\[
\zeta^*(\delta,n)=\min\textstyle\left\{\zeta(\delta,n),\epsilon\left(\frac{\delta}{c}\right)^{n^2},\min\left\{\alpha,\frac{1}{2}\right\}\left(\frac{\delta}{2c}\right)^{n^2}\right\},
\]
where $c$ is the absolute constant from Proposition \ref{prop:Bohr}$(a)$. Apply Corollary \ref{cor:conv-regularity} to $f$ with parameters $\epsilon$ and $\zeta^*$, and with $\mu$ the normalized counting measure on $G$. This yields a $(\delta,\U(n))$-Bohr neighborhood $V$ in $G$, with $\delta\inv,n\leq O_{\alpha,\zeta}(1)$, such that $f$ is $\zeta^*(\delta,n)$-almost $\epsilon$-constant on all  translates of $V$.\medskip 

\noindent\textit{Claim 2.} There is some $g\in G$ such that $|gV\backslash AB|<\zeta^*(\delta,n)|G|$.

\noindent\textit{Proof.} The argument is nearly identical to the proof of Claim 2 in Theorem \ref{thm:strongBL}. By Proposition \ref{prop:Bohr}$(a)$, there is a set $F\seq G$ of size at most $(c/\delta)^n$ such that $G=FV$. For each $g\in F$, fix $Z_g\seq gV$ such that $|Z_g|<\zeta^*(\delta,n)|G|$ and $f$ is $\epsilon$-constant on $gV\backslash Z_g$. Let $Z=\bigcup_{g\in F}Z_g$. Then $|Z|<\zeta^*(\delta,n)|F||G|\leq \epsilon|G|$. By Claim 1, there is some $x\in S\backslash Z$. Choose $g\in F$ such that $x\in gV$, and note  $x\not\in Z_g$. It then follows that $f(y)>0$ for all $y\in gV\backslash Z_g$. Since $f(y)=|A\cap yB\inv|/|G|$, we have $A\cap yB\inv\neq\emptyset$ for all $y\in gV\backslash Z_g$, i.e., $gV\backslash Z_g\seq AB$. So $|gV\backslash AB|\leq |Z_g|<\zeta^*(\delta,n)|G|$. \clqed\medskip

Fix $g\in G$ as in Claim 2, and let $U$ be the $(\frac{\delta}{2},\U(n))$-Bohr neighborhood in $G$ defined from the same homomorphism to $\U(n)$ that yields $V$.
We show that $U$ satisfies conditions $(i)$, $(ii)$, and $(iii)$. 

For $(i)$, we have $|gU\backslash AB|\leq |gV\backslash AB|<\zeta^*(\delta,n)|G|\leq  \zeta(\delta,n)|G|$.

For $(iii)$,  note that $U^2\seq V$ and, by choice of $\zeta^*$ and Proposition \ref{prop:Bohr}$(a)$, we have $|V\backslash g\inv AB|<\zeta^*(\delta,n)|G|\leq\frac{1}{2}(\delta/2c)^{n^2}|G|\leq \frac{1}{2}|U|$.
So  $U\seq g\inv AB(AB)\inv g$ by Claim 3 in the proof of Theorem \ref{thm:strongBL}. Hence $U=gUg\inv\seq AB(AB)\inv$.

For $(ii)$, we appeal to another  covering exercise similar to Claim 3 in Theorem \ref{thm:strongBL} (and alluded to in Remark \ref{rem:BL}). The proof is again left to the reader.
\medskip

\noindent\textit{Claim 3.} Fix $X\seq G$ such that $X=X\inv$ and $G$ can be covered by at most $k$ left translates of $X$. Then for any $C,D\seq G$, if $|X^2\backslash D|<\frac{1}{k}|C|$ then $CD\inv$ contains a left translate of $X$.\medskip

We apply this with $C=A$, $X=U$, $D=g\inv AB$, and $k=(2c/\delta)^{n^2}$. Recall $U^2\seq V$ and $|V\backslash g\inv AB|<\zeta^*(\delta,n)|G|\leq \frac{\alpha}{k}|G|\leq \frac{1}{k}|A|$. So $AB\inv A\inv g$ contains $xU$ for some $x\in G$. Therefore $gx\inv U=gU\inv x\inv\seq  ABA\inv$. 
\end{proof}

We point out that one could deduce a quantitative version of Theorem \ref{thm:AB} in the special case of finite \emph{abelian} groups  from existing  results along the lines of \cite[Theorem 10.1]{SanBR} (perhaps with $\zeta(\delta,n)|G|$ in condition $(i)$ replaced by $\epsilon|U|$ for some constant $\epsilon$). See also the discussion of correlations for sumsets in the finite field model at the end of \cite{SchSis}. 
On the other hand, the specialization of Theorem \ref{thm:AB} to arbitrary finite groups of bounded exponent is worth stating explicitly. This can  be obtained in a routine fashion by combining Theorem \ref{thm:AB} with Corollary \ref{cor:exponent} and making a suitable choice of $\zeta$.

\begin{theorem}\label{thm:ABbe}
Fix $r\geq 1$, $\alpha>0$ and $\zeta\colon \Z^+\to \R^+$. Let $G$ be a finite group of exponent $r$ and suppose $A,B\seq G$ are such that $|A|\geq \alpha|G|$ and $|B|\geq\alpha|G|$. Then there is a normal subgroup $H\leq G$ of index $m\leq O_{r,\alpha,\zeta}(1)$, such that:
\begin{enumerate}[$(i)$]
\item $|gH\backslash AB|<\zeta(m)|G|$ for some $g\in G$,
\item $ABA\inv$ contains a coset of $H$, and
\item $H\seq AB(AB)\inv$.
\end{enumerate}
\end{theorem}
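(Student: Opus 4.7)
The plan is to mirror the deduction of Corollary \ref{cor:exponent} from Theorem \ref{thm:SARF}, but with Theorem \ref{thm:AB} as the input. The key extra ingredient is Proposition \ref{prop:Bohr}$(c)$: in a group of exponent $r$, any $(\delta,\U(n))$-Bohr neighborhood is automatically a normal subgroup as soon as $\delta$ drops below some threshold $\gamma_r$ depending only on $r$. So once Theorem \ref{thm:AB} produces a unitary Bohr neighborhood, we shrink $\delta$ below $\gamma_r$ using the same unitary representation; the resulting smaller Bohr neighborhood $H$ is a normal subgroup, still has bounded index (by Proposition \ref{prop:Bohr}$(a)$), and inherits the properties $(i)$, $(ii)$, $(iii)$ from its ambient Bohr neighborhood.

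Concretely, let $c$ be the absolute constant from Proposition \ref{prop:Bohr}$(a)$, and define
$$\zeta^*(\delta,n)=\min\{\zeta(m):1\leq m\leq (c/\min\{\delta,\gamma_r\})^{n^2}\},$$
which is positive since the minimum is taken over a finite nonempty set of positive values. Apply Theorem \ref{thm:AB} to $A$ and $B$ with parameters $\alpha$ and $\zeta^*$ to obtain a $(\delta,\U(n))$-Bohr neighborhood $U$ in $G$, with $\delta\inv,n\leq O_{r,\alpha,\zeta}(1)$, satisfying $(i)$, $(ii)$, $(iii)$ with $\zeta^*$ in place of $\zeta$. Set $\delta_*=\min\{\delta,\gamma_r\}$, and let $H$ be the $(\delta_*,\U(n))$-Bohr neighborhood in $G$ defined from the same homomorphism $G\to\U(n)$ that yields $U$. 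Then $H\seq U$, and $H$ is a normal subgroup of $G$ by Proposition \ref{prop:Bohr}$(c)$. By Proposition \ref{prop:Bohr}$(a)$, its index $m$ in $G$ satisfies $m\leq(c/\delta_*)^{n^2}\leq O_{r,\alpha,\zeta}(1)$, and by the construction of $\zeta^*$ we have $\zeta^*(\delta,n)\leq\zeta(m)$.

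It remains to verify $(i)$, $(ii)$, and $(iii)$ for $H$, and each of these uses only the inclusion $H\seq U$. Condition $(iii)$ is immediate from $H\seq U\seq AB(AB)\inv$. For $(i)$, fix $g\in G$ with $|gU\backslash AB|<\zeta^*(\delta,n)|G|$; since $gH\seq gU$, we get $|gH\backslash AB|\leq|gU\backslash AB|<\zeta(m)|G|$. For $(ii)$, if $xU\seq ABA\inv$ for some $x\in G$, then $xH\seq xU\seq ABA\inv$. I do not anticipate any real obstacle here: the entire argument is parameter bookkeeping, and the lone subtlety is the careful choice of $\zeta^*$ so as to absorb both the shrinkage from $\delta$ to $\delta_*$ and the coarse index bound on $H$, precisely paralleling the proof of Corollary \ref{cor:exponent}.
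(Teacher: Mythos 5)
Your proposal is correct and matches what the paper intends: the paper states Theorem \ref{thm:ABbe} as following ``in a routine fashion by combining Theorem \ref{thm:AB} with Corollary \ref{cor:exponent} and making a suitable choice of $\zeta$,'' and your argument — applying Theorem \ref{thm:AB} with a precomposed error function $\zeta^*$, then shrinking the Bohr radius below the exponent-$r$ threshold of Proposition \ref{prop:Bohr}$(c)$ to obtain a normal subgroup $H\seq U$ of bounded index, and observing that $(i)$, $(ii)$, $(iii)$ all pass from $U$ to $H$ by inclusion — is precisely that routine deduction, in parallel with the proof of Corollary \ref{cor:exponent}.
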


The proof in \cite{CoBogo} of Bogolyubov's Lemma  for arbitrary finite groups was based on work of Sanders \cite{SanBS}, which actually focuses on algebraic structure in product sets of the form $A^2A^{\nv 2}$ rather than $(AA\inv)^2$. Through a technical case analysis elaborating on Sanders' methods, it was shown in \cite{CoBogo} that all sensible four-product permutations among $A$ and $A\inv$ can be worked with simultaneously. Theorem \ref{thm:AB} allows us to recover this  for free. 

\begin{corollary}
Fix $\alpha>0$. Let $G$ be a finite group and suppose $A\seq G$ is such that $|A|\geq\alpha|G|$. Then there is a  $(\delta,\U(n))$-Bohr neighborhood $U\seq G$, with $\delta\inv, n\leq O_{\alpha}(1)$, such that $U\seq (AA\inv)^2\cap A^2A^{\nv 2}\cap (A\inv A)^2\cap A^{\nv 2}A^2$.
\end{corollary}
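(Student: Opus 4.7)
The plan is to obtain each of the four double-product sets from Theorem \ref{thm:AB}$(iii)$ by running that result on four different input pairs, and then to take a common refinement of the resulting Bohr neighborhoods. Observe that for any $A_1,A_2\in\{A,A\inv\}$, we have $|A_i|=|A|\geq\alpha|G|$, so Theorem \ref{thm:AB}$(iii)$ applies and yields a $(\delta_i,\U(n_i))$-Bohr neighborhood $U_i\seq G$ with $\delta_i\inv,n_i\leq O_\alpha(1)$ such that $U_i\seq A_1A_2(A_1A_2)\inv=A_1A_2A_2\inv A_1\inv$. Running the four choices $(A_1,A_2)=(A,A\inv),(A,A),(A\inv,A),(A\inv,A\inv)$ produces Bohr neighborhoods $U_1,U_2,U_3,U_4$ contained in $(AA\inv)^2$, $A^2A^{\nv 2}$, $(A\inv A)^2$, $A^{\nv 2}A^2$, respectively.

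The remaining task is to replace the four Bohr neighborhoods by a single one that sits in their intersection. I would write $U_i=\tau_i\inv(B^{n_i}_{\delta_i})$ for homomorphisms $\tau_i\colon G\to\U(n_i)$, and form the block-diagonal homomorphism
\[
\tau\colon G\to\U(n), \qquad \tau(g)=\tau_1(g)\oplus\tau_2(g)\oplus\tau_3(g)\oplus\tau_4(g),
\]
where $n=n_1+n_2+n_3+n_4$. Since the operator norm of a block-diagonal unitary minus the identity equals the maximum of the operator norms of the blocks minus the identity, setting $\delta=\min_i\delta_i$ gives $\tau\inv(B^n_\delta)=\bigcap_{i=1}^4 \tau_i\inv(B^{n_i}_\delta)\seq\bigcap_{i=1}^4 U_i$. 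Then $U\coloneqq\tau\inv(B^n_\delta)$ is a $(\delta,\U(n))$-Bohr neighborhood in $G$ with $\delta\inv,n\leq O_\alpha(1)$ contained in the desired intersection.

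There is no real obstacle here; the only substantive step is recognizing that condition $(iii)$ of Theorem \ref{thm:AB}, when applied separately to the four ordered pairs obtained by independently inverting $A$ in each slot, covers all four four-fold product permutations of interest. The combination of Bohr neighborhoods via block-diagonal embedding is a standard device that preserves the $O_\alpha(1)$ complexity bounds because the number of applications of Theorem \ref{thm:AB} is fixed (equal to four).
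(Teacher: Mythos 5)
Your proof is correct and is essentially the same as the paper's: both apply Theorem \ref{thm:AB}$(iii)$ to the four ordered pairs of sets from $\{A,A\inv\}$ and then pass to a common Bohr neighborhood via the block-diagonal embedding $\U(n_1)\times\cdots\times\U(n_4)\hookrightarrow\U(n_1+\cdots+n_4)$ with radius $\min_i\delta_i$. You are a bit more explicit than the paper in spelling out the four input pairs and verifying that the operator-norm metric is the max of the block metrics, but there is no difference in method.
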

\begin{proof}
Let $X_1=(AA\inv)^2$, $X_2=(A\inv A)^2$, $X_3=A^2A^{\nv 2}$,  and $X_4=A^{\nv 2}A^2$. Given $1\leq i\leq 4$, apply Theorem \ref{thm:AB} (with the appropriate choice of $A$ and $B$) to obtain a $(\delta_i,\U(n_i))$-Bohr neighborhood $U_i$ in $G$, with $\delta_i\inv,n_i\leq O_\alpha(1)$ and $U_i\seq X_i$. 

Now note that in general, $\U(m)\times \U(n)$ isometrically embeds in $\U(m+n)$ via $(x,y)\mapsto\big(\begin{smallmatrix}x & 0\\ 0 & y\end{smallmatrix}\big)$. From this it follows that $\bigcap_i U_i$ contains a  $(\delta,\U(n))$-Bohr neighborhood in $G$, where $\delta=\min_i\delta_i$ and $n=\sum_in_i$.
\end{proof}

Finally, we turn to work of Gowers \cite{GowQRG} on \emph{quasirandom} groups which, roughly speaking, can be described as finite groups with no nontrivial (unitary) representations of small dimension. Using Theorem \ref{thm:AB}, we obtain a quick proof of one of the main results in \cite{GowQRG},  although without quantitative bounds (see also \cite[Corollary 1]{NikPyDJ} and surrounding remarks).

\begin{corollary}\label{cor:Gow}
For any $\alpha>0$ there is some $d=O_\alpha(1)$ such that if $G$ is a finite group with no nontrivial  representations of dimension less than $d$, and $A,B,C\seq G$ are such that $|A|,|B|,|C|\geq\alpha|G|$, then $|AB|>(1-\alpha)|G|$ and  $G=ABC$. 
\end{corollary}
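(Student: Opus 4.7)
The plan is to apply Theorem \ref{thm:AB} to $A$ and $B$ with a suitably chosen $\zeta$, and then invoke the quasirandomness hypothesis to force the resulting Bohr neighborhood to be all of $G$. Specifically, fix $\zeta \colon \R^+ \times \N \to \R^+$ to be the constant function $\zeta(\delta,n) = \alpha$. By Theorem \ref{thm:AB}, there is a bound $n_0 = O_\alpha(1)$ such that, for any finite group $G$ and any $A, B \seq G$ of density at least $\alpha$, the Bohr neighborhood $U$ produced by that theorem has $n \leq n_0$. Set $d = n_0 + 1$.

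Now suppose $G$ has no nontrivial representations of dimension less than $d$, and $A, B, C \seq G$ all have density at least $\alpha$. Apply Theorem \ref{thm:AB} to $A$ and $B$ to obtain a Bohr neighborhood $U = \tau\inv(B^n_\delta)$ coming from some homomorphism $\tau \colon G \to \U(n)$ with $n \leq n_0 < d$. By hypothesis, $\tau$ is trivial, so $\tau(g) = I$ for all $g \in G$; hence $U = G$ (since $I \in B^n_\delta$). Condition $(i)$ of Theorem \ref{thm:AB} then provides $g \in G$ with $|gU \setminus AB| < \zeta(\delta,n)|G| = \alpha|G|$, and since $gU = G$, we conclude $|G \setminus AB| < \alpha|G|$, i.e., $|AB| > (1-\alpha)|G|$.

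For the second conclusion, fix an arbitrary $h \in G$. Then
\[
|AB| + |hC\inv| > (1-\alpha)|G| + \alpha|G| = |G|,
\]
so $AB$ and $hC\inv$ must intersect in $G$. Any element of the intersection yields $ab = hc\inv$ for some $a \in A$, $b \in B$, $c \in C$, and hence $h = abc \in ABC$. Therefore $G = ABC$.

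The only real subtlety is one of quantifiers: we must ensure that the dimension $n$ of the representation provided by Theorem \ref{thm:AB} is bounded by a quantity depending only on $\alpha$, and not on $G$ or on any data chosen after $G$. This is precisely why we fix $\zeta$ as a function depending only on $\alpha$ \emph{before} invoking Theorem \ref{thm:AB}; the bound $n_0 = O_{\alpha,\zeta}(1) = O_\alpha(1)$ then depends only on $\alpha$, and so does $d$.
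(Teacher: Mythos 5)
Your proof is correct and follows essentially the same approach as the paper: fix $\zeta\equiv\alpha$, apply Theorem \ref{thm:AB}, invoke the quasirandomness hypothesis to conclude that the homomorphism $\tau$ (and hence $U$) must be trivial, and then use a cardinality/pigeonhole argument to get $G=ABC$. The only differences are cosmetic (you spell out the decomposition of a low-dimensional representation into trivial irreducibles, and you work out the intersection argument in a bit more detail), but the mathematical content is identical to the paper's proof.
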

\begin{proof}
Let $d$ be an integer greater than the $O_{\alpha,\zeta}(1)$ bound in Theorem \ref{thm:AB} where $\zeta\equiv\alpha$ (so $d=O_\alpha(1))$. Let $G$ and $A,B,C$ be as above.  Applying Theorem \ref{thm:AB}, we obtain a $(\delta,\U(n))$-Bohr neighborhood $U$ in $G$, with $\delta\inv, n<d$, and some $g\in G$ such that $|gU\backslash AB|<\alpha|G|$. Since $n<d$, we have $U=G$ by assumption on $G$. So $gU=G$, and hence $|AB|>(1-\alpha)|G|$. Now let $x\in G$ be arbitrary. Then $|xC\inv|=|C|\geq\alpha|G|$, and so $AB\cap xC\inv\neq\emptyset$, i.e., $x\in ABC$.
\end{proof}

We close this subsection with some discussion of the restriction to finite groups in Theorem \ref{thm:AB}. 
In particular, the proof of Claim 1 uses this assumption in a crucial way. To see this, let $G$ be an amenable group with left-invariant measure $\mu$, and fix $A,B\seq G$ with $\mu(A),\mu(B)\geq\alpha$. If $G$ is finite (so $\mu$ is the normalized counting measure) then, as previously observed, we have $\int \boldsymbol{1}_A\ast \boldsymbol{1}_B\,d \mu=\mu(A)\mu(B)$, which can be viewed as simply an application of Fubini's Theorem for $\mu$.  This was used  to conclude that if $S=\{x\in G:(\boldsymbol{1}_A\ast \boldsymbol{1}_B)(x)>\frac{1}{2}\alpha^2\}$, then $\mu(S)\geq\frac{1}{2}\alpha^2$ when $G$ is finite. But for infinite $G$, one cannot necessarily apply Fubini since $\mu$ is  only finitely additive, and indeed these arguments fail drastically. For example, in $(\Z,+)$ the convolution $\boldsymbol{1}_{\N}\ast\boldsymbol{1}_{\N}$ is  identically $0$ for any choice of $\mu$, hence $S=\emptyset$ (and of course we can choose $\mu$ with $\mu(\N)=1$). Compare this to Claim 1 in the proof of Theorem \ref{thm:strongBL}, which shows that in the very special case when $B=A\inv$, we have $\mu(S)\geq\frac{1}{2}\alpha$ even when $G$ is infinite. 

Despite these obstructions, there is precedent for some form of Theorem \ref{thm:AB} for arbitrary amenable groups. In particular, Jin's Theorem \cite{JinAB} says that if $A,B\seq\Z$ have positive upper Banach density then there is a syndetic set $X\seq\Z$ such that $X\backslash (A+B)$ is not syndetic.\footnote{Recall that a subset $X$ of a group $G$ is \emph{syndetic} if $G=FX$ for some finite $F\seq G$. If $G$ is amenable then syndetic is equivalent to positive \emph{lower} Banach density.} In \cite{BFW}, Bergelson, Furstenberg, and Weiss show that one may take $X$ to be a translate of a Bohr neighborhood. This  was subsequently generalized to any countable amenable group by Beiglb\"{o}ck, Bergelson, and Fish \cite{BBF}.  Thus one has an analogue of condition $(i)$ in  Theorem \ref{thm:AB} for a single countable amenable group without uniform bounds, but where $A$ and $B$ are allowed to have positive measure with respect to possibly different measures.

\subsection{The Croot-Sisask Lemma} \label{sec:CS}
In this subsection, we prove a non-quantitative analogue of the Croot-Sisask Lemma \cite{CrSi} for convolutions of arbitrary functions on amenable groups (see Theorem \ref{thm:CS}). This lemma can in fact can refer to any one of a family of results by Croot and Sisask \cite{CrSi}, as well as further variations such as in \cite{CLS} and, notably, in \cite{SanBR} where Sanders proves the Bogolyubov-Ruzsa Lemma in abelian groups with quasi-polynomial bounds.  
As usual, our result will not be quantitative, but it will be rather general in other ways (see the discussion following Corollary \ref{cor:CS}) and will again illustrate the substantial connection between stable regularity and  existing literature. 
From a  general perspective, this  also highlights the historical thread from Grothendieck's \cite{GroWAP} work on weakly almost periodic functions to ``almost periodicity" results in additive combinatorics.

We will first use Theorem \ref{thm:SARF} to prove a result for  stable functions, and then specialize to convolutions in the subsequent corollary. Roughly speaking, our theorem will say that a stable function on an amenable group is almost invariant under shifts by elements from a large unitary Bohr neighborhood. Here ``almost invariance" will be measured using the $\ell_p$-norm for some $p\geq 1$ (defined below).

For the rest of this subsection, we fix an amenable group $G$ and left-invariant measure $\mu$. 
Let $f\colon G\to [\nv 1,1]$ be a function. Given $t\in G$, we write $f_t$ for the shifted function $f(t\cdot x)$. Given some $p\geq 1$, define $\|f\|_p=(\int |f|^p\,d\mu)^{1/p}$.

\begin{theorem}\label{thm:CS}
Fix $p\geq 1$, $\epsilon>0$, and $\bk\colon\R^+\to\Z^+$. Suppose $f\colon G\to [\nv 1,1]$ is a $\bk$-stable function. Then  there is a $(\delta,\U(n))$-Bohr neighborhood $B\seq G$, with $\delta\inv,n\leq O_{\bk,\epsilon,p}(1)$ such that $\|f_t-f\|_p<\epsilon$ for any $t\in B$.
\end{theorem}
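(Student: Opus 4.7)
The plan is to apply Theorem \ref{thm:SARF} to $f$ directly, and then use bi-invariance of the metric on $\U(n)$ to upgrade pointwise almost-constancy on translates of a Bohr neighborhood to $\ell_p$-invariance under shifts by elements of a slightly smaller Bohr neighborhood. Concretely, set $\epsilon_0 = 2^{-1/p}\epsilon$, let $c$ be the constant from Proposition \ref{prop:Bohr}$(a)$, and define $\zeta\colon\R^+\times\N\to\R^+$ by $\zeta(\delta,n)=\frac{\epsilon^p}{2^{p+2}}(\frac{\delta}{2c})^{n^2}$. Apply Theorem \ref{thm:SARF} with parameters $\epsilon_0$, $\zeta$, and $\bk$ to obtain a homomorphism $\tau\colon G\to\U(n)$ and the $(\delta_0,\U(n))$-Bohr neighborhood $B_0=\tau\inv(B^n_{\delta_0})$, with $\delta_0\inv,n\leq O_{\bk,\epsilon,p}(1)$, such that $f$ is $\zeta(\delta_0,n)$-almost $\epsilon_0$-constant on every translate of $B_0$. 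The output Bohr neighborhood will be $B=\tau\inv(B^n_{\delta_0/2})$.

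The key geometric fact is that if $t\in B$ and $x\in gB$ for some $g\in G$, then $tx\in gB_0$. Indeed, bi-invariance of the operator norm metric on $\U(n)$ gives
\[
d(\tau(tx),\tau(g))\leq d(\tau(t)\tau(x),\tau(x))+d(\tau(x),\tau(g))=d(\tau(t),I)+d(\tau(x),\tau(g))<\tfrac{\delta_0}{2}+\tfrac{\delta_0}{2}=\delta_0.
\]
Cover $G$ by at most $k=(2c/\delta_0)^{n^2}$ translates $g_1B,\ldots,g_kB$ (Proposition \ref{prop:Bohr}$(a)$), and for each $i$ fix an exceptional set $Z_i\seq g_iB_0$ with $\mu(Z_i)<\zeta(\delta_0,n)$ such that $f$ is $\epsilon_0$-constant on $g_iB_0\backslash Z_i$.

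Fix $t\in B$ and put $\mathrm{Bad}_t=\bigcup_{i=1}^k(Z_i\cup t\inv Z_i)$. Left-invariance of $\mu$ gives $\mu(\mathrm{Bad}_t)\leq 2k\zeta(\delta_0,n)$. For any $x\notin\mathrm{Bad}_t$, pick $i$ with $x\in g_iB$; the key fact and the defining properties of $\mathrm{Bad}_t$ imply that both $x$ and $tx$ lie in $g_iB_0\backslash Z_i$, whence $|f(x)-f(tx)|<\epsilon_0$. Splitting the integral $\|f_t-f\|_p^p=\int|f(tx)-f(x)|^p\,d\mu(x)$ according to $\mathrm{Bad}_t$ (where the integrand is bounded by $2^p$) and its complement yields
\[
\|f_t-f\|_p^p\leq\epsilon_0^p+2^p\mu(\mathrm{Bad}_t)\leq\epsilon_0^p+2^{p+1}k\zeta(\delta_0,n)<\epsilon^p,
\]
by the choices of $\epsilon_0$ and $\zeta$. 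The only real obstacle here is threading the choice of $\zeta$: it must be small enough, as a function of $(\delta_0,n)$, to absorb the covering number $k$ coming from Proposition \ref{prop:Bohr}$(a)$. This is precisely why Theorem \ref{thm:SARF} is formulated with $\zeta$ as a function of the Bohr parameters rather than a constant.
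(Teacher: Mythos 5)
Your proof is correct and follows essentially the same route as the paper's: apply Theorem \ref{thm:SARF} to $f$ directly with a parameter-dependent $\zeta$, halve the radius from $\delta_0$ to $\delta_0/2$ so that bi-invariance of the $\U(n)$ metric forces $tx\in gB_0$ whenever $t\in B$ and $x\in gB$, cover $G$ by $\leq(2c/\delta_0)^{n^2}$ translates of $B$, throw out the exceptional sets together with their $t$-preimages, and split the $\ell_p^p$ integral into the bad and good parts. The only difference from the paper is your choice of $\zeta$, and here your bookkeeping is in fact the more careful one: on the bad set the integrand $|f_t-f|^p$ is bounded by $2^p$ rather than by $1$ (since $f$ takes values in $[\nv 1,1]$, so $|f_t-f|\leq 2$), which is why you need the extra $2^p$ in the denominator of $\zeta$. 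The paper's inequality $(\dagger)$ as printed, $\int_Y h^p\,d\mu<2\zeta(\delta,n)|F|$, appears to lose this $2^p$ factor and would not close with the paper's $\zeta=\frac{\epsilon^p}{4}(\delta/2c)^{n^2}$ for $p>1$; your $\zeta=\frac{\epsilon^p}{2^{p+2}}(\delta_0/2c)^{n^2}$ is what makes the final estimate $\|f_t-f\|_p^p<\tfrac{\epsilon^p}{2}+\tfrac{\epsilon^p}{2}=\epsilon^p$ go through cleanly.
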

\begin{proof}
Fix $\epsilon>0$ and $p\geq 1$. Set $\epsilon'=\epsilon/2^{1/p}$ and define $\zeta\colon \R^+\times\N\to\R^+$ so that
\[
\zeta(\delta,n)=\frac{\epsilon^p}{4}\left(\frac{\delta}{2c}\right)^{n^2},
\]
where $c$ is the absolute constant from Proposition \ref{prop:Bohr}$(a)$.
Apply Theorem \ref{thm:SARF} to $f$, with parameters $\epsilon'$, $\zeta$, and $\bk$, to get a $(\delta,\U(n))$-Bohr neighborhood $C\seq G$, with $\delta\inv, n\leq O_{\epsilon,p,\bk}(1)$, such that $f$ is $\zeta(\delta,n)$-almost $\epsilon'$-constant on all translates of $C$. Let $B\seq C$ be the $(\delta/2,\U(n))$-Bohr neighborhood in $G$ obtained from the same homomorphism to $\U(n)$ defining $C$.  Using Proposition \ref{prop:Bohr}$(a)$, fix $F\seq G$ of size at most $(2c/\delta)^{n^2}$ such that $G=FB$. We fix $t\in B$ and show $\|f_t-f\|_p<\epsilon$.

Given $g\in G$, let $Z_g\seq gC$ be a set such that $\mu(Z_g)<\zeta(n,\delta)$ and $f$ is $\epsilon'$-constant on $gC\backslash Z_g$. Then set $Y_g=Z_g\cup t\inv Z_g$. Note that $\mu(Y_g)<2\zeta(n,\delta)$. Let $Y=\bigcup_{g\in F} Y_g$. Then $G\backslash Y\seq \bigcup_{g\in F}gB\backslash Y_g$. So for each $g\in F$, we can choose some set $X_g\seq gB\backslash Y_g$ so that $\{X_g\}_{g\in F}$ is a partition of $G\backslash Y$. 

Now let $h=|f_t-f|$. Then 
\[
\| f_t-f\|_p^p= \int_Y h^p\,d\mu+\sum_{g\in F}\int_{X_g} h^p\,d\mu<2\zeta(n,\delta)|F|+\sum_{g\in F}\int_{X_g}h^p\,d\mu.\tag{$\dagger$}
\]

Next fix $g\in F$ and $x\in X_g$. Then $x\in gB\backslash Y_g\seq gB\backslash Z_g\seq gC\backslash Z_g$. Moreover, we have $tx\in BgB=gB^2\seq gC$. Since $x\in  gB\backslash Y_g\seq gB\backslash t\inv Z_g$, we also have $tx\not\in Z_g$. Thus $x,tx\in gC\backslash Z_g$. Since $f$ is $\epsilon'$-constant on $gC\backslash Z_g$, we conclude that $h(x)\leq\epsilon'$. Altogether, 
\[
\text{ for any $g\in F$, }~\int_{X_g}h^p\,d\mu\leq(\epsilon')^p\mu(X_g)=\frac{\epsilon^p}{2}\mu(X_g).\tag{$\dagger\dagger$}
\]

Finally, by combining $(\dagger)$, $(\dagger\dagger)$, the  choice of $\zeta$, and the fact that $\{X_g\}_{g\in F}$ is a collection of pairwise disjoint sets, we obtain
\[
\|f_t-f\|_p^p<2|F|\zeta(n,\delta)+\frac{\epsilon^p}{2}\sum_{g\in F}\mu(X_g)\leq 2\zeta(n,\delta)\left(\frac{2c}{\delta}\right)^{n^2}+\frac{\epsilon^p}{2}=\epsilon^p.
\]
So $\|f_t-f\|_p<\epsilon$, as desired.
\end{proof}

Combined with Corollary \ref{cor:conv-stable}, we obtain a non-quantitative variation of the Croot-Sisask Lemma for convolutions of bounded functions on amenable groups.

\begin{corollary}\label{cor:CS}
Fix $p\geq 1$ and $\epsilon>0$. Suppose $f,g\colon G\to [\nv 1,1]$ are arbitrary functions. Then  there is a $(\delta,\U(n))$-Bohr neighborhood $B\seq G$, with $\delta\inv,n\leq O_{\epsilon,p}(1)$ such that $\|(f\ast g)_t-f\ast g\|_p<\epsilon$ for any $t\in B$.
\end{corollary}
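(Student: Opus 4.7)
The plan is to combine Theorem \ref{thm:CS} with Corollary \ref{cor:conv-stable} in the most direct way possible, since both of these results have already been set up precisely to make Corollary \ref{cor:CS} a formal consequence.

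First I would verify that the convolution $f \ast g$ actually takes values in $[\nv 1,1]$, so that it is an admissible input to Theorem \ref{thm:CS}. Since $\mu$ is a left-invariant probability measure and $|f(t) g(t\inv x)| \leq 1$ for every $x,t \in G$, the defining integral in Definition \ref{def:conv} satisfies $|(f \ast g)(x)| \leq \int 1 \, d\mu = 1$, so $f \ast g \colon G \to [\nv 1,1]$.

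Next I would invoke Corollary \ref{cor:conv-stable}, which says that $f \ast g$ is $\bk$-stable for the function $\bk \colon \R^+ \to \Z^+$ coming from Fact \ref{fact:Hilbert}. The crucial feature, for the bookkeeping of the implicit constants, is that this $\bk$ is an \emph{absolute} stability function: it depends only on the stability of the inner product on Hilbert space unit balls, and in particular is independent of $f$, $g$, $G$, and $\mu$.

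Finally I would apply Theorem \ref{thm:CS} to the function $h \coloneqq f \ast g$ with parameters $p$, $\epsilon$, and this absolute $\bk$. The theorem provides a $(\delta, \U(n))$-Bohr neighborhood $B \seq G$, with $\delta\inv, n \leq O_{\bk,\epsilon,p}(1)$, such that $\|h_t - h\|_p < \epsilon$ for every $t \in B$. Because $\bk$ is absolute, the bound $O_{\bk,\epsilon,p}(1)$ is really $O_{\epsilon,p}(1)$, which gives exactly the statement of the corollary. There is no main obstacle here: the real work was done in establishing Theorem \ref{thm:CS} (the stable Croot--Sisask statement) and Corollary \ref{cor:conv-stable} (the Hilbert space stability of convolutions), and this final corollary is merely their composition.
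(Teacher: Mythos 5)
Your proposal is correct and matches the paper exactly: the paper states Corollary \ref{cor:CS} as an immediate consequence of Theorem \ref{thm:CS} and Corollary \ref{cor:conv-stable}, with no further argument given. Your extra bookkeeping (checking $f\ast g$ maps into $[\nv 1,1]$, noting that $\bk$ is absolute so $O_{\bk,\epsilon,p}(1)=O_{\epsilon,p}(1)$) is exactly the right thing to observe.
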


We briefly compare and contrast the above statement to existing versions of the Croot-Sisask Lemma such as the original results of Croot and Sisask in \cite[Section 1.2]{CrSi} as well as the variation by Sanders \cite[Lemma 4.3]{SanBR}. First, the interest in arbitrary $\ell_p$-norms often comes from applications in which one makes a clever choice of $p$ in order to obtain better  bounds (as in \cite{SanBR}). So this leverage is lost in our result since it is non-quantitative. That being said, existing quantitative results in the \emph{noncommutative} case are in terms of shifts from a symmetric set, rather than a Bohr neighborhood. Indeed, the ability to work with Bohr neighborhoods in arbitrary groups is usually indicative of model-theoretic methods involving ultraproducts. Our result also applies to convolutions of arbitrary functions, whereas existing results typically assume one of the functions is the (balanced) indicator function of a set. On the other hand, many existing results deal with the more general ``local" regime of sets with small tripling, while our result would only be meaningful for indicator functions of dense sets (albeit in arbitrary amenable groups). Finally, we point out that the results of \cite{CrSi,SanBR} provide a sharper error estimate of the form $\epsilon\|f\|_p$, and it is not immediately obvious whether that can be recovered with our methods. However,  constant $\epsilon$ error can still be useful in applications, for example see Lemma 4.2 in \cite{Lov-surv} (Lovett's survey of \cite{SanBR}  in the case of $\mathbb{F}_2^n$).

\begin{remark}
Using the  corollaries in Section \ref{sec:main} in place of Theorem \ref{thm:SARF}, one can specialize Theorem \ref{thm:CS} and Corollary \ref{cor:CS} to certain subclasses of amenable groups and replace unitary Bohr neighborhoods with other objects. In particular, one obtains subgroups when restricting amenable groups of bounded exponent,  $(\delta,\T(n))$-Bohr neighborhoods when restricting to abelian groups, and $(\delta,\T(n),m)$-Bohr neighborhoods when restricting to amenable torsion groups. 
\end{remark}

\section{Further model-theoretic results}\label{sec:more}

We return to the model-theoretic setting of Section \ref{sec:mainMT}.
Let $T$ be a continuous theory with a sort $G$ expanding a group. Fix a left-invariant  formula $\varphi(x,z)$, with $x$ of sort $G$.\footnote{Here we mean that $\varphi(x,z)$ is left-invariant when evaluated in any model of $T$.}  Assume that $\varphi(x,z)$ is stable in $T$. Let $\varphi^\sharp(x;y,z)=\varphi(x\cdot y,z)$. 

\subsection{Summary} \label{sec:summary}
We first recall what was established in Section \ref{sec:mainMT} (with some slight changes in notation).

\begin{theorem}\label{thm:summary}
Let $M$ be a model of $T$.
\begin{enumerate}[$(a)$]
\item The Ellis semigroup of $S_\varphi(M)$ is  $(S_{\varphi^\sharp}(M),\ast)$ (defined in Lemma \ref{lem:ESid}).
\item $S_\varphi(M)$ and $S_{\varphi^\sharp}(M)$ are both weakly almost periodic, uniquely ergodic, and have unique minimal subflows, denoted  $\Gen{\varphi}{(M)}$ and $\Gen{\varphi^\sharp}{(M)}$, respectively. 
\item  $(\Gen{\varphi^\sharp}{(M)},\ast)$ is a compact  group.
\item  $\Gen{\varphi}{(M)}$ is a compact homogeneous $\Gen{\varphi^\sharp}{(M)}$-space. 
\end{enumerate}
\end{theorem}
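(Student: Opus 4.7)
The plan is to assemble parts $(a)$--$(c)$ from results already established in Section \ref{sec:mainMT}, and then spend the real effort on $(d)$. Part $(a)$ is exactly Lemma \ref{lem:ESid}. For $(b)$, weak almost periodicity of $S_{\varphi^\sharp}(M)$ is the content of Corollary \ref{cor:EN} (via Fact \ref{fact:E=EE} and Lemma \ref{lem:ESid}$(ii)$, which together realize $(S_{\varphi^\sharp}(M),\ast)$ as its own Ellis semigroup with continuous elements), and WAP of $S_\varphi(M)$ then follows from Remark \ref{rem:SWAP} since the restriction map $\operatorname{res}\colon S_{\varphi^\sharp}(M)\to S_\varphi(M)$ is a surjective $G$-flow homomorphism. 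Given WAP, the unique minimal subflow and unique ergodicity for each space come from parts $(a)$ and $(e)$ of Theorem \ref{thm:EN}. Part $(c)$ is Corollary \ref{cor:EN}$(c)$.

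For $(d)$, write $K=\Gen{\varphi^\sharp}{(M)}$ and $\Omega=\Gen{\varphi}{(M)}$, and let $\Sigma\colon S_{\varphi^\sharp}(M)\to E(S_\varphi(M))$ be the isomorphism from Lemma \ref{lem:ESid}, with $\sigma_p=\Sigma(p)$. Each $k\in K$ then acts on $S_\varphi(M)$ by the continuous map $\sigma_k$. My first step is to verify that this action restricts to $\Omega$: since $\sigma_k$ is a pointwise limit of maps $\breve{g_i}$ with $g_i\in G$, and $\Omega$ is closed and $G$-invariant, $\sigma_k(\Omega)\seq\Omega$. Continuity of the action in $k$, with $q\in\Omega$ fixed, is immediate from continuity of $\Sigma$ followed by evaluation at $q$.

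For transitivity, I would route through $\operatorname{res}$. First, $\operatorname{res}(K)$ is a subflow of $S_\varphi(M)$: it is compact and, using that $K$ is a left ideal in $(S_{\varphi^\sharp}(M),\ast)$ (Corollary \ref{cor:EN}$(b)$), $G$-invariant. It is also minimal, since any proper subflow $\Omega'\subsetneq \operatorname{res}(K)$ would make $\operatorname{res}\inv(\Omega')\cap K$ a nonempty closed $G$-invariant subset of $K$, forcing it to be all of $K$ by minimality of $K$ and hence $\operatorname{res}(K)\seq\Omega'$, a contradiction. Uniqueness of the minimal subflow (from $(b)$) then gives $\operatorname{res}(K)=\Omega$. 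Next, one observes that the action of $K$ on $\Omega$ is intertwined with left multiplication in $K$ via $\operatorname{res}$: using the identity $\varphi(k\ast k',b)=d^b_{k'}(k)$ coming from Lemma \ref{lem:ESid}$(i)$ (with $g=e$), together with the fact that $d^b_q$ depends only on $q|_\varphi$, one obtains $\sigma_k(\operatorname{res}(k'))=\operatorname{res}(k\ast k')$. Since $K$ acts transitively on itself by left multiplication and $\operatorname{res}|_K\colon K\to\Omega$ is surjective, $K$ acts transitively on $\Omega$.

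The main obstacle is the bookkeeping in $(d)$: one must carefully track the restriction map at the level of both the flows and the semigroup operation, and in particular pin down the surjection $K\to\Omega$ and its compatibility with the two actions. Once that identification is in place, homogeneity falls out essentially for free from minimality and the group structure on $K$.
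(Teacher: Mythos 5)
Your assembly of parts $(a)$--$(c)$ matches the paper exactly, and for $(d)$ the paper simply cites a purely topological lemma from \cite{CoLSGT} (Lemma 4.10), whereas you reprove that topological content from scratch; your argument --- identifying $\Omega=\Gen{\varphi}{(M)}$ with $\operatorname{res}(K)$, checking it is the unique minimal subflow, and then pulling transitivity back through the intertwining identity $\sigma_k(\operatorname{res}(k'))=\operatorname{res}(k\ast k')$ --- is correct and is a self-contained substitute for that citation. The crux is exactly the relation $\sigma_k(\operatorname{res}(k'))=\operatorname{res}(k\ast k')$, which you derive correctly from Lemma \ref{lem:ESid}$(i)$ with $g=e$ together with the observation that $d^b_q$ depends only on $q|_\varphi$. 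Two small points worth tightening. First, to call $\Omega$ a compact homogeneous $K$-space one needs \emph{joint} continuity of $K\times\Omega\to\Omega$; you only verify separate continuity in $k$. Continuity in $q$ (for fixed $k$) is exactly the WAP hypothesis (each $\sigma_k$ is continuous on $S_\varphi(M)$), and joint continuity then follows by Ellis's theorem for compact group actions; this should be said. Second, in part $(b)$ you write that the unique minimal subflow and unique ergodicity \emph{for each space} ``come from parts $(a)$ and $(e)$ of Theorem \ref{thm:EN}.'' But Theorem \ref{thm:EN} gives those conclusions for the \emph{Ellis semigroup} of a flow, not the flow itself; for $S_{\varphi^\sharp}(M)$ this suffices because $E(S_{\varphi^\sharp}(M))\cong S_{\varphi^\sharp}(M)$ by Fact \ref{fact:E=EE}, but for $S_\varphi(M)$ one must pass the conclusions down along the surjection $\operatorname{res}\colon S_{\varphi^\sharp}(M)\to S_\varphi(M)$. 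For the unique minimal subflow this is the same easy pullback argument you already use in $(d)$. For unique ergodicity of $S_\varphi(M)$, one needs a little more --- e.g.\ the retraction $q\mapsto\sigma_u(q)$ (continuous by WAP and a $G$-flow map by Corollary \ref{cor:EN}$(d)$) pushes any invariant measure to one supported on $\Omega$, and on $\Omega\cong K/\Stab(u_\varphi)$ a $G$-invariant measure is automatically $K$-invariant by density, hence unique. The paper is also silent on this last point, so it is not a serious defect, but your phrasing suggests the claim is immediate from Theorem \ref{thm:EN} when it is not.
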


Part $(d)$ wasn't proved explicitly, but follows from parts $(a)$, $(b)$, and $(c)$ through purely topological means  (see \cite[Lemma 4.10]{CoLSGT}).

Following standard terminology, we will refer to elements of $\Gen{\varphi}{(M)}$ (resp., $\Gen{\varphi^\sharp}{(M)}$) as \textbf{generic} $\varphi$-types (resp., $\varphi^\sharp$-types) over $M$. This follows the use of ``generic" in the general setting of topological dynamics, e.g., as in \cite{NewTD}. In particular, if $S$ is any $G$-flow, then a point $p\in S$ is generic if and only if $S=GU$ for any open neighborhood $U$ of $p$ (and hence $S=FU$ for some finite $F\seq G$ by compactness). See also \cite[Proposition 2.7]{CoLSGT}. Consequently, our use of ``generic" is also consistent with the work of Ben Yaacov in \cite[Section 6.1]{BYgroups}, which we will use below.

The main goal of this section is to establish results  on the relationship between generic types, stabilizers, and (model-theoretic) connected components. In particular, we will prove the following theorem.

\begin{theorem}\label{thm:generics}
Let $M\models T$ be sufficiently saturated, and set $G=G(M)$. Let $u$ be the identity in $\Gen{\varphi^\sharp}{(M)}$, and let $u_\varphi$ denote $u|_\varphi$ (which is in $\Gen{\varphi}{(M)}$).
\begin{enumerate}[$(a)$]
\item $G$ acts transitively on $\Gen{\varphi}{(M)}$ and $\Gen{\varphi^\sharp}{(M)}$.

\item There is a smallest $\varphi$-type-definable subgroup of $G$, denoted $G^{00}_{\varphi}$.
Moreover, $G^{00}_{\varphi}=\Stab(u_\varphi)$ and $G/G^{00}_\varphi\cong \Gen{\varphi}{(M)}$ (as topological homogeneous $G$-spaces) via the map $aG^{00}_{\varphi}\mapsto au_\varphi$. 

\item There is a smallest $\varphi^\sharp$-type-definable subgroup of $G$, denoted $G^{00}_{\varphi^\sharp}$. Moreover, $G^{00}_{\varphi^\sharp}$ is normal, $G^{00}_{\varphi^\sharp}=\Stab(u)$, and $G/G^{00}_{\varphi^\sharp}\cong \Gen{\varphi^\sharp}{(M)}$ (as topological groups) via the map $aG^{00}_{\varphi^\sharp}\mapsto au$.

\item If $a\in G$ then $au_\varphi$ is the unique type in $\Gen{\varphi}{(M)}$ concentrating on $aG^{00}_{\varphi}$, and $au$ is the unique type in $\Gen{\varphi^\sharp}{(M)}$ concentrating on $aG^{00}_{\varphi^\sharp}$.

\item $G^{00}_{\varphi^\sharp}$ is the intersection of all conjugates of $G^{00}_{\varphi}$. 

\item If $p\in \Gen{\varphi^\sharp}{(M)}$ then $\Stab(p)=G^{00}_{\varphi^\sharp}$; and if $p\in \Gen{\varphi}{(M)}$ then $\Stab(p)=aG^{00}_{\varphi}a\inv$ where $p\models aG^{00}_{\varphi}$. In particular, $G^{00}_{\varphi^\sharp}=\bigcap_{p\in \Gen{\varphi}{(M)}}\Stab(p)$.

\end{enumerate}
\end{theorem}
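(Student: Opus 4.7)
The plan is to mirror the classical treatment of generics, stabilizers, and connected components from \cite{PiGST,CoLSGT} in the continuous setting, leveraging the topological-dynamical machinery built in Section \ref{sec:mainMT}: specifically the continuous $\varphi^\sharp$-definable group homomorphism $\pi\colon G \to K := \Gen{\varphi^\sharp}{(M)}$, $g \mapsto gu$, from Proposition \ref{prop:pi-ext}, together with the identification $S^\sharp \cong E(S)$ from Lemma \ref{lem:ESid}. The six parts fall into three waves: transitivity (a); identification of $G^{00}_\varphi$ and $G^{00}_{\varphi^\sharp}$ as stabilizers with the quotient isomorphisms (b)--(d); and the stabilizer/intersection formulas (e), (f).

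For part (a), $\pi$ has dense image in $K$ by minimality of $K$. To promote density to surjectivity I would use saturation: for any $p \in K$, the condition ``$xu = p$'' can be encoded as the partial $\varphi^\sharp$-type $\{d^b_{xu}(y) = d^b_p(y) : b \in M^z\}$ over $M$ via the defining formulas of Theorem \ref{thm:SFT}, which is finitely satisfied by density of $\pi(G)$ in $K$ and hence realized in $M$. Transitivity on $\Gen{\varphi}{(M)}$ then follows by post-composing with the restriction map $r\colon S^\sharp \to S$.

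For (b)--(d), set $G^{00}_\varphi := \Stab_G(u_\varphi)$ and $G^{00}_{\varphi^\sharp} := \Stab_G(u)$. Both are visibly subgroups. Type-definability of $G^{00}_\varphi$ uses left-invariance to rewrite $\varphi(gx,b) = \varphi(x,c(g,b))$ for a definable function $c$, so the condition $gu_\varphi = u_\varphi$ becomes the $\varphi$-partial type $\{d^\varphi_{u_\varphi}(c(g,y)) = d^\varphi_{u_\varphi}(y)\}$ in $g$; bi-invariance of $\varphi^\sharp$ handles the $\varphi^\sharp$ case analogously. Normality of $G^{00}_{\varphi^\sharp}$ is immediate from $G^{00}_{\varphi^\sharp} = \ker\pi$. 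The natural maps $G/G^{00}_\varphi \to \Gen{\varphi}{(M)}$ and $G/G^{00}_{\varphi^\sharp} \to K$ given by $aH \mapsto au_\varphi$ (resp.\ $au$) are well-defined and injective by definition of the stabilizer, surjective by (a), and homeomorphisms by matching the logic and type-space topologies, with the second also a group isomorphism. For (d), the continuous extension to $S^\sharp$ of the quotient $G \to G/H$, for any $\varphi$-type-definable bounded-index $H$, sends $u = \pi(e)$ to the identity coset $H$ itself; so $u_\varphi$ concentrates on $G^{00}_\varphi$ and, translating, $au_\varphi$ concentrates on $aG^{00}_\varphi$, with uniqueness from the bijection in (b). The minimality in (b), (c) is now immediate: for $\varphi$-type-definable $H \leq G$ the same extension argument shows $u_\varphi$ concentrates on $H$; for $g \in \Stab(u_\varphi)$, the equality $g\cdot u_\varphi = u_\varphi$ forces $u_\varphi$ to concentrate on $gH$ as well, so $gH = H$ and $g \in H$.

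Finally, for (f), the individual stabilizer equalities follow from (a) and normality: $p = au$ (resp.\ $au_\varphi$) gives $\Stab(au) = aG^{00}_{\varphi^\sharp}a\inv = G^{00}_{\varphi^\sharp}$ and $\Stab(au_\varphi) = aG^{00}_\varphi a\inv$. The identity $G^{00}_{\varphi^\sharp} = \bigcap_a aG^{00}_\varphi a\inv$, which simultaneously proves (e), has the easy direction $\subseteq$ from $G^{00}_{\varphi^\sharp} \subseteq G^{00}_\varphi$ combined with normality. For the reverse, if $g \in \bigcap_a aG^{00}_\varphi a\inv$ then $g$ fixes every $\varphi$-generic, so by density of $\pi(G)$ and continuity $g\cdot q$ and $q$ agree under $r$ for every $q \in K$; identifying $\Gen{\varphi}{(M)} \cong K/\Stab_K(u_\varphi)$, this places $\pi(g)$ in the largest normal subgroup of $K$ contained in $\Stab_K(u_\varphi)$. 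The hard part I expect is then showing this ``core'' is trivial, that is, that $K$ acts faithfully on $\Gen{\varphi}{(M)}$; this should follow from the WAP structure of $S_\varphi(M)$ together with the faithfulness of the ambient Ellis semigroup action via the embedding $E(S) \hookrightarrow S^{S_\varphi(M)}$, but the reduction requires careful use of Lemma \ref{lem:ESid} and Theorem \ref{thm:EN}, and this is where the main technical work of the proof will lie.
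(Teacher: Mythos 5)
Your overall plan correctly mirrors the classical treatment of generics and connected components (as in \cite{CoLSGT}), and the skeleton of (a)--(d) is sound; in particular the transitivity argument via saturation and density of $\pi(G)$ matches the spirit of the paper's Corollary \ref{cor:transitive} and Proposition \ref{prop:generics}$(a)$. However, there are two places where the proposal has gaps.

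First, the claim that $\Stab(u_\varphi)$ is \emph{$\varphi$}-type-definable does not follow from the formula-level rewriting you give. Using left-invariance to write $\varphi(gx,b)=\varphi(x,c(g,b))$ turns the condition $gu_\varphi = u_\varphi$ into the family $\{d^\varphi_{u_\varphi}(c(g,y))=d^\varphi_{u_\varphi}(y)\}$, but each instance $\varphi(a_i,c(g,b)) = \varphi(ga_i,b)=\varphi^\sharp(g;a_i,b)$ is a $\varphi^\sharp$-formula in $g$, not a $\varphi$-formula: the map $g\mapsto\varphi(ga_i,b)$ does not factor through $S_\varphi(M)$ from left-invariance alone. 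This gives $\varphi^\sharp$-type-definability, which is strictly weaker. The paper closes this gap by observing (Remark \ref{rem:SWAP}) that the Ellis semigroup element $\sigma_u\in E(S_\varphi(M))$ is \emph{continuous} --- a genuine consequence of weak almost periodicity/stability --- so that $\sigma_u$ is a $\varphi$-definable map extending $g\mapsto gu_\varphi$, and $\Stab(u_\varphi)=\sigma_u^{-1}(u_\varphi)$ is then $\varphi$-type-definable (see the proof of Proposition \ref{prop:generics}$(b)$). You will need this or an equivalent input.

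Second, and more substantively, you leave (e)/(f) essentially open, reducing them to the assertion that $K := \Gen{\varphi^\sharp}{(M)}$ acts faithfully on $\Gen{\varphi}{(M)}$ and flagging this as the "main technical work." That reduction is valid (and the faithfulness you want is in fact implicit in the paper's Remark \ref{rem:groupflow}, which identifies $K$ with $E(\Gen{\varphi}{(M)})\subseteq \Gen{\varphi}{(M)}^{\Gen{\varphi}{(M)}}$). But the paper's actual argument for (e) is a short direct computation that avoids any abstract faithfulness statement: if $g\notin G^{00}_{\varphi^\sharp}$, i.e.\ $gu\ne u$, choose $a,b$ with $\varphi^\sharp(gu;a,b)\ne\varphi^\sharp(u;a,b)$, that is, $\varphi(gua,b)\ne\varphi(ua,b)$; since $u$ commutes with every element of $S^\sharp$ (Corollary \ref{cor:EN}$(d)$), $ua=au$, so this becomes $\varphi(gau_\varphi,b)\ne\varphi(au_\varphi,b)$, hence $g\notin\Stab(au_\varphi)=aG^{00}_\varphi a^{-1}$. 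Combined with the easy containment from normality, this gives (e), and (f) then falls out. So the step you expected to be hardest has an elementary calculational proof; the key ingredient is the commutation $ua=au$, which you did not exploit.

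One further small imprecision in (d): the assertion that the continuous extension of $G\to G/H$ to $S^\sharp$ "sends $u=\pi(e)$ to the identity coset" is not automatic from $u$ lying over the identity; it requires $u$'s idempotence. The paper phrases this as $p\models\Stab(u)$ iff $p\ast u=u$, so that $u\ast u=u$ gives $u\models\Stab(u)$, and hence $u_\varphi\models\Stab(u_\varphi)$; you should make the analogous appeal explicit. Similarly, the minimality argument should pass through the $\varphi^\sharp$ case first (any $\varphi$-type-definable subgroup is $\varphi^\sharp$-type-definable, so $\Stab(u)\subseteq H$, whence $u_\varphi\models H$), and uses a version of Fact \ref{fact:extend-generic} to produce a generic concentrating on $H$, rather than assuming that $u_\varphi$ itself concentrates there from the start.
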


When $T$ is a classical first-order theory, $G^{00}_\varphi$  is $\varphi$-definable of finite index  (and typically denoted $G^0_\varphi$), and $M$ need not be saturated. In this case, the correspondence between generic $\varphi$-types and cosets of $G^0_\varphi$ was established by the authors and Terry in \cite{CPT} using earlier work of Hrushovski and the second author \cite{HrPiGLF}. The role of $\varphi^\sharp$ was made explicit by the authors in \cite{CPpfNIP}, and then a complete account of the above theorem (in the classical case) was done by the first author in \cite{CoLSGT}.

Recall from Remark \ref{rem:inmodel} that Theorem \ref{thm:summary} holds when $\varphi(x,z)$ is only stable in $M$ (using the same proofs). The same is true for Theorem \ref{thm:generics} in the classical case since this is the setting considered in \cite{CoLSGT}.  However,  if $T$ is continuous then the minimal subflow in $S_\varphi(M)$ need not be finite (or even profinite), and some amount of saturation is needed to make sense of the  type-definable groups in Theorem \ref{thm:generics}. Thus the the analogue of Theorem \ref{thm:generics} when $\varphi(x,z)$ is only stable in a single (non-saturated) model is not addressed here. 

Finally, we note that local stability for groups in continuous logic was developed by Ben Yaacov in \cite{BYgroups}, with a focus on the connection between forking and  genericity (following the classical case in \cite{HrPiGLF}). This work will be used in our proof of Proposition \ref{prop:absolute}. Further results on generic types and connected components in the continuous setting are  given in \cite{BYgroups}, but under a global assumption of stability.

\subsection{Absoluteness of the generic types}

As discussed above, when $T$ is a classical first-order theory,  the space $\Gen{\varphi}{(M)}$ in Theorem \ref{thm:generics} is finite (see also Remark \ref{rem:discrete-value} below) and hence the isomorphism type of $\Gen{\varphi}{(M)}$ does not depend on the choice of  model $M$. However, this fact is not necessary for the proof of Theorem \ref{thm:generics} in the classical case (at least the one given in \cite{CoLSGT}). By contrast, to prove Theorem \ref{thm:generics} for continuous $T$, we will  \emph{first} establish that $\Gen{\varphi}{(M)}$ is an invariant of the theory.

Throughout this subsection, we will use the notation $\Gen{\varphi}{(M)}(M)$ (resp., $\Gen{\varphi^\sharp}{(M)}(M)$) for the unique minimal flow in $S_\varphi(M)$ (resp., $S_{\varphi^\sharp}(M)$), where $M\models T$.

\begin{proposition}\label{prop:absolute}
Fix $M\models T$ and $N\succeq M$. Given $p\in S_\varphi(M)$, let $p^*\in S_\varphi(N)$ be the  unique $M$-definable extension of $p$. Then $\Gen{\varphi}{(N)}(N)=\{p^*:p\in \Gen{\varphi}{(M)}(M)\}$. 
\end{proposition}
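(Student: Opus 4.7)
The plan is to study the map $\Pi \colon S_\varphi(M) \to S_\varphi(N)$ given by $p \mapsto p^*$, where $p^*$ is the unique $M$-definable extension of $p$ guaranteed by stability (Theorem \ref{thm:SFT}(a)). The first step is to verify that $\Pi$ is continuous, injective, and $G(M)$-equivariant, with image equal to the set of $M$-definable $\varphi$-types over $N$. Continuity uses the symmetry of defining schemata: for fixed $\bar n \in N^y$, the evaluation $p \mapsto p^*(\varphi(x,\bar n)) = d^\varphi_p(\tp_{\varphi^*}(\bar n/M)) = d^{\varphi^*}_{\tp(\bar n/M)}(p)$ is a $\varphi$-formula over $M$ by Theorem \ref{thm:SFT}(b), hence continuous in $p$. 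Injectivity and $G(M)$-equivariance follow routinely from uniqueness of the definable extension, and the identity $r \circ \Pi = \mathrm{id}_{S_\varphi(M)}$ (where $r$ is the restriction map) makes $\Pi$ a homeomorphism onto its image.

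The substantive step is to invoke Ben Yaacov's local stability theory for groups in continuous logic \cite{BYgroups}, which (paralleling the classical Hrushovski--Pillay framework) characterizes generic $\varphi$-types as those that do not fork over a small base, which we arrange to lie inside $M$. Combined with the standard principle, valid in both classical and continuous stable theories, that the unique $M$-definable extension of a $\varphi$-type to $N$ coincides with its unique non-forking extension over $M$, both inclusions in the conclusion follow. For the forward inclusion: if $p \in \Gen{\varphi}{(M)}(M)$ is generic, then $p$ is non-forking, so $p^*$ is a non-forking and hence generic extension, giving $\Pi(p) \in \Gen{\varphi}{(N)}(N)$. For the reverse inclusion: if $q \in \Gen{\varphi}{(N)}(N)$, then $q$ is non-forking over the base chosen in $M$, so $q|_M$ is non-forking and hence generic over $M$; moreover $q$ must equal the unique non-forking extension of $q|_M$, namely $(q|_M)^* = \Pi(q|_M)$, so $q \in \Pi(\Gen{\varphi}{(M)}(M))$.

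The principal obstacle is the faithful translation of the forking/genericity correspondence from \cite{HrPiGLF} into the continuous setting of \cite{BYgroups}. Although Remark \ref{rem:inmodel} shows that stability in a single model already suffices for the topological-dynamical framework of Section \ref{sec:mainMT}, the notion of forking in continuous logic and its compatibility with the group action require the extra care developed by Ben Yaacov; this is precisely why the paper flags \cite{BYgroups} as the key tool. Once that input is secured, the remaining argument is essentially bookkeeping, relying only on the stability-theoretic uniqueness of $M$-definable (equivalently, non-forking) extensions, which underlies the identification $q = (q|_M)^*$ in the backward direction.
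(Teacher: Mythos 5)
Your plan hinges on the biconditional \emph{generic $\Leftrightarrow$ non-forking over $\emptyset$}, and both of your inclusions use the converse implication (non-forking $\Rightarrow$ generic): you pass from ``$p^*$ is non-forking'' to ``$p^*$ is generic over $N$'' in the forward direction, and from ``$q|_M$ is non-forking'' to ``$q|_M$ is generic over $M$'' in the reverse. This is exactly the step the paper does \emph{not} take. The only input the paper draws from \cite{BYgroups} is Lemma 6.6 (generic $\Rightarrow$ no condition forks over $\emptyset$) together with Proposition 2.6 to upgrade to non-forking of the full type, and the authors explicitly remark that the natural converse (Proposition 6.8 of \cite{BYgroups}) ``makes a global assumption of stability.'' In the local continuous setting, where only $\varphi(x,z)$ is assumed stable, the characterization of generics as exactly the non-forking types is not a result you can cite, and the paper's proof is structured precisely to avoid needing it.

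The paper's proof instead uses the forward implication once (to show every $\hat p\in \Gen{\varphi}{(N)}(N)$ is $M$-definable, hence of the form $p^*$), and then for the reverse inclusion runs a saturation argument: fix a generic $p$ over $M$, pick any generic $q^*$ over $N$ (already known to be $M$-definable with $q\in\Gen{\varphi}{(M)}(M)$), use minimality of $\Gen{\varphi}{(M)}(M)$ to approximate $p$ in the $G(M)$-orbit closure of $q$, and then use saturation of $N$ to produce $g\in G(N)$ with $(gq^*)|_M=p$; since $gq^*$ lies in the minimal $G(N)$-flow and restricts to $p$, it must equal $p^*$ by the first part. Your preliminary observations about the map $\Pi$ (continuity via Theorem~\ref{thm:SFT}(b), injectivity, $G(M)$-equivariance) are correct and in the right spirit, but they are not where the difficulty lies. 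To salvage your outline you would need either to prove the local-continuous version of ``non-forking $\Rightarrow$ generic'' from scratch, or to replace both uses of it with an argument in the style of the paper's second paragraph; as written, your proposal identifies the obstacle but leaves it open, so the argument is incomplete.
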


\begin{proof}
First suppose  $\hat{p}\in \Gen{\varphi}{(N)}(N)$. Since $\hat{p}$ is generic, no condition in $\hat{p}$ forks over $\emptyset$ by \cite[Lemma 6.6]{BYgroups} (where ``forking" is in the sense of \cite{BYgroups}). It follows (using, e.g., \cite[Proposition 2.6]{BYgroups}) that $\hat{p}$ does not fork over $\emptyset$, and thus $\hat{p}$ is definable over any submodel of $N$ (e.g., $M$). Therefore   $\hat{p}=p^*$ where $p=\hat{p}|_M\in \Gen{\varphi}{(M)}(M)$. 

The converse direction is similar to \cite[Proposition 6.8]{BYgroups}, which makes a global assumption of stability. We sketch the proof in the local case. Fix $p\in \Gen{\varphi}{(M)}(M)$. 
Without loss of generality, assume $N$ is sufficiently saturated (relative to $M$). Fix some element of $\Gen{\varphi}{(N)}(N)$ which, by the above, we may write as $q^*$ for some $q\in \Gen{\varphi}{(M)}(M)$. Since $p$ is in the $G(M)$-orbit closure of $q$, it follows by saturation that there is some $g\in G(N)$ such that $(gq^*)|_M=p$. Therefore $gq^*=p^*$ by the argument above (applied with $\hat{p}=gq^*$). So $p^*\in\Gen{\varphi}{(N)}(N)$, as desired.
\end{proof}

The desired absoluteness result for generic $\varphi$-types now follows.

\begin{theorem}\label{thm:absolute}
If $M,N\models T$ then
$\Gen{\varphi}{(M)}(M)$ is homeomorphic to $\Gen{\varphi}{(N)}(N)$.  In particular, if $M\preceq N$ then the restriction map is a homeomorphism from $\Gen{\varphi}{(M)}(N)$ to $\Gen{\varphi}{(N)}(M)$  whose inverse is given by definitional extension. 
\end{theorem}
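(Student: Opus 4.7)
The plan is to derive Theorem~\ref{thm:absolute} from Proposition~\ref{prop:absolute} by a standard compact-Hausdorff argument; essentially all of the real content has already been packaged into the proposition.

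First I would handle the ``in particular'' clause. Assume $M\preceq N$ and consider the restriction map $\rho\colon S_\varphi(N)\to S_\varphi(M)$, which is continuous by definition of the logic topologies. By Proposition~\ref{prop:absolute}, every $\hat{p}\in\mathcal{M}_\varphi(N)$ has the form $p^*$ for the (necessarily unique) type $p=\hat{p}|_M\in\mathcal{M}_\varphi(M)$. Thus $\rho$ restricts to a bijection $\mathcal{M}_\varphi(N)\to\mathcal{M}_\varphi(M)$ whose set-theoretic inverse is the definitional extension map $p\mapsto p^*$. Now $\mathcal{M}_\varphi(N)$ is a minimal subflow of $S_\varphi(N)$ by Theorem~\ref{thm:summary}(b), hence in particular a closed subset of the compact Hausdorff space $S_\varphi(N)$, hence compact; and $\mathcal{M}_\varphi(M)$ is Hausdorff. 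A continuous bijection from a compact space to a Hausdorff space is automatically a homeomorphism, so $\rho$ restricts to a homeomorphism $\mathcal{M}_\varphi(N)\to\mathcal{M}_\varphi(M)$ whose inverse is given by $p\mapsto p^*$, as required.

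For arbitrary $M,N\models T$, I would fix any common elementary extension $K$ of $M$ and $N$ (for example, a sufficiently saturated monster model, or any elementary amalgam obtained via a standard ultrapower construction). Applying the previous paragraph to $M\preceq K$ and $N\preceq K$ yields homeomorphisms $\mathcal{M}_\varphi(K)\cong\mathcal{M}_\varphi(M)$ and $\mathcal{M}_\varphi(K)\cong\mathcal{M}_\varphi(N)$, and composing one with the inverse of the other delivers the desired homeomorphism $\mathcal{M}_\varphi(M)\cong\mathcal{M}_\varphi(N)$.

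The main obstacle really lies in Proposition~\ref{prop:absolute}, which is where stability enters in an essential way (through the definability of $\varphi$-types, Theorem~\ref{thm:SFT}(a), and through the characterization of genericity via non-forking in \cite{BYgroups}). After that, the present theorem reduces to topological bookkeeping. The one point worth flagging, and the only place one could slip up, is the compactness of the minimal subflow $\mathcal{M}_\varphi(N)$ as a closed subset of $S_\varphi(N)$, since this is precisely what allows the continuous restriction bijection to be upgraded to a homeomorphism.
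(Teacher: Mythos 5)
Your proof is correct and is precisely the argument the paper leaves implicit when it says Theorem~\ref{thm:absolute} ``now follows'' from Proposition~\ref{prop:absolute}: you establish the bijection on minimal subflows from the proposition, upgrade it to a homeomorphism via the standard compact-to-Hausdorff lemma (correctly flagging that compactness of the minimal subflow is the point where this could go wrong), and reduce the general case to the elementary-extension case by amalgamating over a common elementary extension.
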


We also have the following corollary of the proof of Proposition \ref{prop:absolute}.

\begin{corollary}\label{cor:transitive}
If $M\models T$ is $\omega$-saturated, then $G(M)$ acts transitively on $\Gen{\varphi}{(M)}(M)$.
\end{corollary}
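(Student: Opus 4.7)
The plan is to mimic the converse direction of the proof of Proposition \ref{prop:absolute}, but to carry it out inside $M$ itself rather than inside a more saturated elementary extension. Fix $p,q \in \Gen{\varphi}{(M)}(M)$; the task is to produce $g \in G(M)$ with $g \cdot p = q$. Since $\Gen{\varphi}{(M)}(M)$ is the unique minimal $G(M)$-subflow of $S_\varphi(M)$ by Theorem \ref{thm:summary}$(b)$, the orbit $G(M) \cdot p$ is dense in $\Gen{\varphi}{(M)}(M)$. Consequently, any finite collection of conditions of the form $|\theta(y \cdot p) - \theta(q)| \leq \epsilon$, where $\theta(x)$ is a $\varphi$-formula over $M$ and $\epsilon > 0$, is simultaneously satisfied by some element of $G(M)$.

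I would then bundle all such conditions into a single partial type $\pi(y)$ over $M$ in a variable $y$ of sort $G$ expressing ``$y \cdot p = q$''. By left-invariance and stability of $\varphi$, together with Theorem \ref{thm:SFT}, the function $y \mapsto \theta(y \cdot p)$ is computed by a $\varphi^\sharp$-formula $d^\theta_p(y)$ over $M$, so each condition of $\pi$ has the form $|d^\theta_p(y) - \theta(q)| \leq \epsilon$ and $\pi$ is in fact a partial $\varphi^\sharp$-type over $M$. The finite satisfiability in $G(M)$ observed above, together with $\omega$-saturation of $M$, yields a realization $g \in G(M)$ of $\pi$, and by construction this $g$ satisfies $g \cdot p = q$, establishing transitivity.

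The main subtlety is the saturation bookkeeping: $\pi(y)$ is a priori indexed by the full collection of $\varphi$-formulas over $M$, which may be large, but passing through the stable defining formula collapses it to a local $\varphi^\sharp$-type whose realization is guaranteed by $\omega$-saturation, exactly as in the analogous step of \cite[Proposition 6.8]{BYgroups}. Everything else is a direct transcription of the topological orbit-closure argument already used in Proposition \ref{prop:absolute}.
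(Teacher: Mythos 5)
Your overall strategy is correct and takes a genuinely more direct route than the paper. The paper passes through a sufficiently saturated elementary extension $N\succ M$, invokes Proposition \ref{prop:absolute} to get an exact $g\in G(N)$ with $gp^*=q^*$, rephrases this as the $\varphi^\sharp$-sentence $\inf_y\sup_z|\psi_1(y,z)-\psi_2(z)|=0$ (where $\psi_1(y,z)=\varphi(y\cdot p,z)$ and $\psi_2(z)=\varphi(q,z)$ are the defining formulas), transfers it to $M$ by elementarity, and only then applies $\omega$-saturation. You bypass $N$ entirely by using minimality of $\Gen{\varphi}{(M)}(M)$ (hence density of the $G(M)$-orbit of $p$) to get finite satisfiability directly inside $M$. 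This is an efficient simplification; in particular it does not rely on Proposition \ref{prop:absolute} as a black box.

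The one step that needs more care is the final saturation argument, and your stated justification does not quite hold up. The partial type $\pi(y)=\{|d^\theta_p(y)-\theta(q)|\leq\epsilon\}$ is indexed by all $\varphi$-formulas $\theta$ over $M$ and all $\epsilon>0$, so it has parameters from all of $M$. Saying $\pi$ is a ``local $\varphi^\sharp$-type'' controls which formulas occur, not how many parameters occur, so this observation alone does not bring $\pi$ into the range of $\omega$-saturation. The paper's packaging is precisely what fixes this: by absorbing the $z$-quantification into a single $\varphi^\sharp$-formula $\chi(y)\coloneqq\sup_z|\psi_1(y,z)-\psi_2(z)|$, the type to realize becomes the nested chain $\{\chi(y)\leq\epsilon:\epsilon>0\}$, whose parameters are just those of the two defining formulas $\psi_1,\psi_2$; and the detour through $N$ is there to verify that $\inf_y\chi(y)=0$ actually holds in $M$, rather than merely the weaker statement you extract from density of the orbit (which is density in the logic topology, not in the $d_\varphi$-metric). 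Your argument can be repaired by performing this same reformulation: after establishing finite satisfiability of $\pi$ (which shows $\pi$, and hence the equivalent chain $\{\chi(y)\leq\epsilon\}$, is consistent), apply $\omega$-saturation to $\{\chi(y)\leq\epsilon\}$ rather than to $\pi$ itself. Without making the role of $\chi$ and its parameters explicit, the appeal to $\omega$-saturation is a real gap.
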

\begin{proof}
Fix $p,q\in\Gen{\varphi}{(M)}(M)$. By definability of $p$ and $q$, we have  formulas $\psi_1(y,z)=\varphi(y\cdot p,z)$ and $\psi_2(z)=\varphi(q,z)$ over $M$. Now let $N\succ M$ be sufficiently saturated (relative to $M$), and let $p^*,q^*\in S_\varphi(N)$ be the $M$-definable extensions of $p$ and $q$, respectively. As in the proof of Proposition \ref{prop:absolute}, there is some $g\in G(N)$ such that $gp^*=q^*$.
So $N\models \inf_y\sup_{z}|\psi_1(y,z)-\psi_2(z)|=0$. Since $M$ is $\omega$-saturated, there is $g\in G(M)$ such that $M\models \sup_{z}|\psi_1(g,z)-\psi_2(z)|=0$, i.e., $gp=q$.
\end{proof}

Next we establish absoluteness of the compact  group $\Gen{\varphi^\sharp}{(M)}(M)$.\footnote{At this point it is worth reminding the reader that $\varphi^\sharp(x;y,z)$ need not be stable.}  Most of the legwork will be handled by the following general topological remark.

\begin{remark}\label{rem:groupflow}
Suppose $S$ is a weakly almost periodic $G$-flow with a dense $G$-orbit, say given by $x\in S$. Let $E$ be  the Ellis semigroup of $S$, and let $K$ be the unique minimal subflow of $E$.  Then one can show that $C\coloneqq\{\sigma(x):\sigma\in K\}$ is the unique minimal subflow of $S$, and the Ellis semigroup $E(C)$ is a compact group isomorphic to $K$. In particular, it is not hard to check that any function in $K$ maps $C$ to $C$ (in fact, any function in $K$ maps all of $S$ to $C$ by \cite[Proposition II.5]{EllNer}). So we obtain a well-defined map from $K$ to $C^C$ sending $\sigma\in K$ to $\sigma{\upharpoonright}C$. This map is the desired isomorphism between $K$ and $E(C)$. 
\end{remark}

\begin{theorem}\label{thm:absolute-sharp}
If $M,N\models T$ then 
$\Gen{\varphi^\sharp}{(M)}(M)\cong \Gen{\varphi^\sharp}{(N)}(N)$ (as topological groups). In particular, if $M\preceq N$ then the restriction map is a topological group isomorphism from $\Gen{\varphi^\sharp}{(M)}(N)$ to $\Gen{\varphi^\sharp}{(N)}(M)$.
\end{theorem}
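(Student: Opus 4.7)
As in the proof of Theorem \ref{thm:absolute}, it suffices by passing to a common elementary extension to treat the case $M \preceq N$. The plan is to identify both $K_M := \Gen{\varphi^\sharp}{(M)}(M)$ and $K_N := \Gen{\varphi^\sharp}{(N)}(N)$ with Ellis semigroups of the corresponding $\varphi$-generics via Remark \ref{rem:groupflow}, match these Ellis semigroups using Theorem \ref{thm:absolute}, and then verify that the resulting isomorphism is realized concretely by restriction of types.

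Write $C_M = \Gen{\varphi}{(M)}(M)$ and $C_N = \Gen{\varphi}{(N)}(N)$. By Remark \ref{rem:groupflow} (applied to the $G(M)$-flow $S_\varphi(M)$, whose Ellis semigroup is $S_{\varphi^\sharp}(M)$ by Lemma \ref{lem:ESid}), the assignment $p \mapsto \sigma_p|_{C_M}$ is a topological group isomorphism from $K_M$ onto the Ellis semigroup $E_{G(M)}(C_M)$ of $C_M$ viewed as a $G(M)$-flow, and similarly $K_N \cong E_{G(N)}(C_N)$. Theorem \ref{thm:absolute} supplies a $G(M)$-equivariant homeomorphism $\rho \colon C_N \to C_M$ via restriction, so $\rho$-conjugation gives a topological semigroup isomorphism $E_{G(M)}(C_N) \cong E_{G(M)}(C_M)$. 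Moreover $C_N$ is a minimal weakly almost periodic $G(M)$-flow (minimality transfers from $C_M$ through the equivariant homeomorphism $\rho$, and weak almost periodicity is inherited by subflows and by restriction to subgroups), so Remark \ref{rem:groupflow} applied to $C_N$ as a $G(M)$-flow gives that $E_{G(M)}(C_N)$ is itself a compact group. The task therefore reduces to proving the equality $E_{G(M)}(C_N) = E_{G(N)}(C_N)$ inside $C_N^{C_N}$.

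This last equality is the main obstacle. The key point is that, since $\rho$ is a homeomorphism, the topology on $C_N$ agrees with the one generated by $q \mapsto \varphi(q, b)$ for $b \in M^z$ alone (both topologies are compact Hausdorff on $C_N$ and one refines the other, so they must coincide). Now given any $q \in C_N$, Proposition \ref{prop:absolute} ensures $q$ is $M$-definable, so for each $b \in M^z$ the function $y \mapsto \varphi(yq, b)$ is given by the $\varphi^\sharp$-formula $d^b_q(y)$ over $M$. Therefore a basic neighborhood of $\breve{g} \in C_N^{C_N}$ for $g \in G(N)$ is cut out by finitely many conditions $\theta_i(g) \in I_i$, with the $\theta_i$ being $\varphi^\sharp$-formulas over $M$, and elementarity $M \preceq N$ immediately yields $g' \in G(M)$ with $\theta_i(g')$ arbitrarily close to $\theta_i(g)$, giving the required density.

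To finish, one verifies that the composite isomorphism $K_N \cong E_{G(N)}(C_N) = E_{G(M)}(C_N) \cong E_{G(M)}(C_M) \cong K_M$ is precisely the restriction map $p \mapsto p|_M$. Via the identifications from Remark \ref{rem:groupflow}, this reduces to checking that $\rho \circ \sigma_p|_{C_N} \circ \rho^{-1} = \sigma_{p|_M}|_{C_M}$ for any $p \in K_N$, which follows from Lemma \ref{lem:ESid} together with the $M$-definability of points of $C_N$; in particular, $p|_M$ automatically lands in $K_M$.
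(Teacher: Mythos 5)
Your proof is correct and follows essentially the same route as the paper's: reduce to $M\preceq N$, identify $\Gen{\varphi^\sharp}{}$ with the Ellis semigroup of $\Gen{\varphi}{}$ via Remark \ref{rem:groupflow}, transfer the Ellis semigroup isomorphism through the $G(M)$-equivariant homeomorphism $\rho$ supplied by Theorem \ref{thm:absolute}, and then reduce everything to showing $E_{G(M)}(C_N) = E_{G(N)}(C_N)$, which is done using the $M$-definability of generic $\varphi$-types from Proposition \ref{prop:absolute} together with elementarity. You spell out a few steps that the paper compresses---notably the reason the subbasic open sets of $C_N^{C_N}$ can be taken to involve only $M$-parameters, and the explicit verification $\rho \circ \sigma_p|_{C_N}\circ \rho^{-1} = \sigma_{p|_M}|_{C_M}$ where the paper leaves the final "unraveling" to the reader---but these are elaborations of the same argument, not a different one.
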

\begin{proof}
It suffices to assume $N\succeq M$. To ease notation and line up with Remark \ref{rem:groupflow}, we  let $K=\Gen{\varphi^\sharp}{(M)}(M)$, $K^*=\Gen{\varphi^\sharp}{(N)}(N)$, $C=\Gen{\varphi}{(M)}(M)$, and $C^*=\Gen{\varphi}{(N)}(N)$. Applying Remark \ref{rem:groupflow}, we have that $C$ is a $G(M)$-flow with $E(C)\cong K$, and $C^*$ is a $G(N)$-flow with $E(C^*)\cong K^*$. But note  that $C^*$ is also a $G(M)$-flow, whose Ellis semigroup we denote by $E_M(C^*)$. By Theorem \ref{thm:absolute}, restriction  from $S_{\varphi}(N)$ to $S_{\varphi}(M)$ induces a homeomorphism $\rho\colon C^*\to C$. Also, if $g\in G(M)$ and $p\in S_{\varphi}(N)$ then $(gp)|_M=g(p|_M)$. Therefore, $\rho$ is a $G(M)$-flow isomorphism, and thus canonically induces an isomorphism (of $G(M)$-flows and topological groups) from $E_M(C^*)$ to $E(C)$. To summarize, we have  isomorphisms $K^*\cong E(C^*)$ and $E_M(C^*)\cong E(C)\cong K$. So to establish $K\cong K^*$, it suffices to show  $E(C^*)=E_M(C^*)$. 

For $g\in G(N)$, let $\breve{g}\colon C^*\to C^*$ denote the action by $g$. We fix $g\in G(N)$ and show that $\breve{g}\in E_M(C^*)=\overline{\{\breve{a}:a\in G(M)\}}$. In light of the homeomorphism $\rho$, it suffices to fix $b_1,\ldots,b_n\in M^z$, $p_1,\ldots,p_n\in C^*$, and  $\epsilon>0$, and find  $a\in G(M)$ such that $|\varphi(gp_i,b_i)-\varphi(ap_i,b_i)|<\epsilon$ for all $1\leq i \leq n$. Such an  $a$ exists since $M\preceq N$ and each $p_i$ is definable over $M$ (by Proposition \ref{prop:absolute}).

We have now shown that $K^*$ and $K$ are isomorphic as topological groups. By unraveling all of the maps used above, one can check that the underlying isomorphism  is precisely the restriction map. 
\end{proof}

\subsection{Proof of Theorem \ref{thm:generics}}

Throughout this section we let $M\models T$ be a sufficiently saturated model, and set $G=G(M)$. We  follow the notation laid out in Section \ref{sec:summary} and in the statement of Theorem \ref{thm:generics}. We will frequently use the fact that the identity $u$ in $\Gen{\varphi^\sharp}{(M)}$ commutes with every element of $S_{\varphi^\sharp}(M)$ (see Corollary \ref{cor:EN}$(d)$). In particular, $gu=ug$ for all $g\in G$. 

Recall that for a point $p$ in a $G$-flow $S$, the stabilizer $\Stab(p)=\{g\in G:gp=p\}$ is a subgroup of $G$. Note also that $\Stab(gp)=g\Stab(p)g\inv$.

\begin{proposition}\label{prop:generics}$~$
\begin{enumerate}[$(a)$]
\item $\Stab(u)$ is normal and $\varphi^\sharp$-type-definable of bounded index, and $G/\Stab(u)\cong \Gen{\varphi^\sharp}{(M)}$ (as topological groups) via  $g\Stab(u)\mapsto gu$.
\item $\Stab(u_\varphi)$ is $\varphi$-type-definable of bounded index, and $G/\Stab(u_\varphi)\cong \Gen{\varphi}{(M)}$ (as topological homogeneous $G$-spaces) via  $g\Stab(u_\varphi)\mapsto gu_\varphi$.
\item $u\models \Stab(u)$ and $u_\varphi\models \Stab(u_\varphi)$. 
\end{enumerate}
\end{proposition}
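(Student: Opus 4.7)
The strategy is to exploit the group homomorphism $\pi\colon G\to K$ from Proposition~\ref{prop:pi-ext}---the restriction of $p\mapsto p\ast u$ to realized types---together with the fact that $u$ is the identity of $(K,\ast)$. For (a), since $\pi$ is a group homomorphism, $\Stab(u)=\ker(\pi)$ is automatically normal. To see $\Stab(u)$ is $\varphi^\sharp$-type-definable, I would unwind $gu=u$ as the conjunction of conditions $\theta(gu)=\theta(u)$ for every $\varphi^\sharp$-formula $\theta(x)$; since $g\mapsto\theta(gu)$ is itself a $\varphi^\sharp$-formula by $\varphi^\sharp$-definability of $\pi$, this presents $\Stab(u)$ as an intersection of $\varphi^\sharp$-definable zero sets. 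Bounded index follows from injectivity of the induced map $G/\Stab(u)\hookrightarrow K$ combined with the bounded cardinality of $K\subseteq S_{\varphi^\sharp}(M)$. Surjectivity of $\pi$ onto $K$ is the place where saturation enters: given $p\in K$, the orbit $Gu$ is dense in $K$ by Corollary~\ref{cor:EN}(a), so the collection $\{\pi(x)\in U:U\text{ an open neighborhood of }p\text{ in }K\}$ is a finitely satisfiable partial $\varphi^\sharp$-type over $M$, realized in $G$ by sufficient saturation. Equipping $G/\Stab(u)$ with the topology transported from $K$ through the resulting bijection then yields the desired topological group isomorphism.

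For (b), the plan is to follow the same template using the $G$-equivariant orbit map $g\mapsto gu_\varphi$ into $\Gen{\varphi}{(M)}$. This is no longer a group homomorphism, but $\Stab(u_\varphi)$ is still a subgroup, and the same unwinding presents it as $\varphi$-type-definable (with $\varphi$-formulas in place of $\varphi^\sharp$-formulas), of bounded index by injection into $\Gen{\varphi}{(M)}$. Surjectivity now uses transitivity of the $G$-action on $\Gen{\varphi}{(M)}$, which is exactly Corollary~\ref{cor:transitive} under sufficient saturation; alternatively it follows from part (a) together with the homogeneous $K$-space structure on $\Gen{\varphi}{(M)}$ from Theorem~\ref{thm:summary}(d).

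Part (c) reduces to two semigroup identities. For $u\models\Stab(u)$, substituting $u$ for $x$ in the defining conditions $\theta(xu)=\theta(u)$ gives $\theta(u\ast u)=\theta(u)$, which holds since $u\ast u=u$ by Corollary~\ref{cor:EN}(c). For $u_\varphi\models\Stab(u_\varphi)$, one needs $u$ to act as the identity on $u_\varphi$ under the Ellis-semigroup action on $S_\varphi(M)$. Here Remark~\ref{rem:groupflow} is essential: it identifies $(K,\ast)$ with $E(\Gen{\varphi}{(M)})$ via restriction to the minimal flow, carrying the identity $u\in K$ to the identity map of $\Gen{\varphi}{(M)}$; and $u_\varphi$ indeed lies in $\Gen{\varphi}{(M)}$, because the $G$-equivariant restriction map $S_{\varphi^\sharp}(M)\to S_\varphi(M)$ sends the minimal flow $K$ onto a minimal subflow, which by uniqueness must be $\Gen{\varphi}{(M)}$. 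The main obstacle I anticipate is the saturation argument in (a), since this is where continuous-logic compactness has to do the real work of upgrading the topological-dynamical density of $Gu$ in $K$ into genuine realization.
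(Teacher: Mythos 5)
Your overall strategy matches the paper's, and parts (a) and (c) are close to the actual argument, but there are two genuine gaps, one of which is substantial.

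The serious gap is in part (b). You claim that ``the same unwinding presents $\Stab(u_\varphi)$ as $\varphi$-type-definable (with $\varphi$-formulas in place of $\varphi^\sharp$-formulas).'' This step does not go through. In part (a) the unwinding works because Proposition~\ref{prop:pi-ext} tells you $\pi\colon g\mapsto gu$ is $\varphi^\sharp$-definable, so each $g\mapsto\theta(gu)$ is a $\varphi^\sharp$-formula. There is no analogous statement handed to you for $g\mapsto gu_\varphi$. If you unravel $\theta(gu_\varphi)$ for $\theta(x)=\varphi(x,b)$, the group multiplication in the translation action pushes you into $\varphi^\sharp$-formulas in $g$, not $\varphi$-formulas. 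So the unwinding you describe shows at most that $\Stab(u_\varphi)$ is $\varphi^\sharp$-type-definable, which is not what is claimed. The paper's actual proof of (b) therefore has to establish separately that $g\mapsto gu_\varphi$ is $\varphi$-definable: it observes that this map is the restriction to realized types of $\sigma_u\in E(S_\varphi(M))$, and then invokes the continuity of $\sigma_u$ (Remark~\ref{rem:SWAP}, via Theorem~\ref{thm:SFT}(b) or Grothendieck). Once $\sigma_u\colon S_\varphi(M)\to\Gen{\varphi}{(M)}$ is known to be continuous, $\theta\circ\sigma_u$ is a $\varphi$-formula for each $\varphi$-formula $\theta$, and only then does your unwinding succeed. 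This missing continuity argument is the real content of part (b), and it also silently underpins your treatment of $u_\varphi\models\Stab(u_\varphi)$ in part (c), since the characterization ``$p$ concentrates on $\Stab(u_\varphi)$ iff $\sigma_u(p)=u_\varphi$'' depends on it.

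A smaller but still real gap: in part (a) you appeal to ``the bounded cardinality of $K\subseteq S_{\varphi^\sharp}(M)$'' without justification. The ambient type space $S_{\varphi^\sharp}(M)$ is not bounded as $M$ grows, and compact Hausdorff groups can have arbitrarily large cardinality, so boundedness of $K=\Gen{\varphi^\sharp}{(M)}$ does not come for free. This is exactly what the absoluteness result Theorem~\ref{thm:absolute-sharp} provides, and the paper invokes it explicitly at this point (likewise Theorem~\ref{thm:absolute} is needed for boundedness of $\Gen{\varphi}{(M)}$ in part (b)). Your anticipation that saturation is where the work happens is half right; the other half is that you first need absoluteness to know the target is small enough for saturation to apply.
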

\begin{proof}
Part $(a)$. Define $\pi\colon G\to \Gen{\varphi^\sharp}{(M)}$ so that $\pi(g)=gu$. By Proposition \ref{prop:pi-ext}, $\pi$ is a $\varphi^\sharp$-definable homomorphism with dense image. Since $\Gen{\varphi^\sharp}{(M)}$ is bounded (by Theorem \ref{thm:absolute-sharp}) and $M$ is sufficiently saturated, $\pi$ induces an isomorphism of topological groups from $G/\ker(\pi)$ to $\Gen{\varphi^\sharp}{(M)}$. Finally, note that $\ker(\pi)=\Stab(u)$.

Part $(b)$. Since $G$ acts transitively on $\Gen{\varphi}{(M)}$ (by Corollary \ref{cor:transitive}), it makes sense to view $\Gen{\varphi}{(M)}$ as a homogeneous $G$-space.
To prove the result, it suffices to show that the map $g\mapsto gu_\varphi$ is $\varphi$-definable, since then we can argue as in part $(a)$ (using Theorem \ref{thm:absolute} in place of Theorem \ref{thm:absolute-sharp}). Consider the map $\sigma_u\in E(S_\varphi(M))$ from the proof of Lemma \ref{lem:ESid}. Recall from Remark \ref{rem:SWAP} that $\sigma_u$ is continuous.  One can also check that $\sigma_u(p|_\varphi)=(p\ast u)|_\varphi$ for any  $p\in S_{\varphi^\sharp}(M)$.
Hence $\sigma_u$ maps $S_\varphi(M)$ to $\Gen{\varphi}{(M)}$ (recall Corollary \ref{cor:EN}$(b)$) and extends the map $g\mapsto gu_\varphi$.

Part $(c)$. By the proof of part $(a)$ (and Proposition \ref{prop:pi-ext}),  a type $p\in S_{\varphi^\sharp}(M)$ concentrates on $\Stab(u)$ if and only if $p\ast u=u$. So $u\models \Stab(u)$. 
Similarly, from the proof of part $(b)$, we see that a type $p\in S_\varphi(M)$ concentrates on $\Stab(u_\varphi)$ if and only if $\sigma_u(p)=u_\varphi$. Moreover, $\sigma_u(u_\varphi)=(u\ast u)|_\varphi=u_\varphi$. So $u_\varphi\models \Stab(u_\varphi)$.
\end{proof}

Note that the proof of Proposition \ref{prop:generics}$(a)$ establishes surjectivity of the map $g\mapsto gu$ from $G$ to $\Gen{\varphi^\sharp}{(M)}$. So we also have the following conclusion.

\begin{corollary}\label{cor:transitive-sharp}
$G$ acts transitively on $\Gen{\varphi^\sharp}{(M)}$.
\end{corollary}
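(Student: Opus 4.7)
The strategy is to bootstrap transitivity directly from surjectivity of the map $\pi\colon G\to \Gen{\varphi^\sharp}{(M)}$, $g\mapsto gu$. The remark immediately preceding the corollary already flags that this surjectivity was established along the way in the proof of Proposition \ref{prop:generics}(a); to unpack it, note that Proposition \ref{prop:pi-ext} extends $\pi$ to the continuous semigroup homomorphism $p\mapsto p\ast u$ on $S_{\varphi^\sharp}(M)$, whose image is a left ideal containing $u$ and hence equal to $\Gen{\varphi^\sharp}{(M)}$ by Corollary \ref{cor:EN}(b). Saturation of $M$ then realizes every $\varphi^\sharp$-type in the preimage of a fixed target by an actual element of $G$, giving surjectivity of $\pi$ on the nose.

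Given this, the transitivity argument is immediate. For any $p,q\in \Gen{\varphi^\sharp}{(M)}$, pick $g,h\in G$ with $gu=p$ and $hu=q$, and set $a=hg\inv$. The $G$-action on $S_{\varphi^\sharp}(M)$ agrees with the restriction of $\ast$ to realized types (this is built into the identification with the Ellis semigroup in Lemma \ref{lem:ESid} and the fact that $p\mapsto p\ast u$ is a semigroup homomorphism). Consequently,
\[
a\cdot p=(hg\inv)\ast(g\ast u)=((hg\inv)g)\ast u=h\ast u=q,
\]
so $G$ acts transitively on $\Gen{\varphi^\sharp}{(M)}$.

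I do not anticipate any real obstacle here once surjectivity is extracted from the proof of Proposition \ref{prop:generics}(a); the only bookkeeping is the compatibility between the ambient group action and the $\ast$-operation on realized types, which is a formal consequence of the setup in Section \ref{sec:mainMT}. The analogue in \ref{cor:transitive} required a genuine saturation/definability argument to move $p$ to $q$ via an element of $G(M)$; here, once we know every element of the minimal flow is of the form $gu$ for some $g\in G$, the group structure of $\Gen{\varphi^\sharp}{(M)}$ renders the transitivity completely formal.
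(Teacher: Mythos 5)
Your proof is correct and follows the paper's reasoning: the paper simply observes that the proof of Proposition \ref{prop:generics}$(a)$ already establishes surjectivity of $g\mapsto gu$ onto $\Gen{\varphi^\sharp}{(M)}$, from which transitivity is immediate. One minor simplification: you do not need to route the computation through the semigroup operation $\ast$ on realized types; once you know $p=gu$ and $q=hu$, the group action axiom $a\cdot(g\cdot u)=(ag)\cdot u$ alone gives $(hg\inv)\cdot p=q$, with no appeal to Lemma \ref{lem:ESid} required.
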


To finish the proof of Theorem \ref{thm:generics}, we will need the following basic fact, whose proof is left as an exercise (and is a routine adaptation of the classical case).

\begin{fact}\label{fact:extend-generic}
Let $\psi(x,w)$ be a left-invariant formula (with $x$ of sort $G$), and assume that $S_\psi(M)$ has a unique minimal subflow (i.e., there is a generic $\psi$-type over $M$). Let $H\leq G$ be a $\psi$-type-definable bounded-index subgroup of $G$. Then there is some generic $p\in S_\psi(M)$ concentrating on $H$.
\end{fact}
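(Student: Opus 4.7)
The plan is to pick any generic type $q\in\Gen{\psi}{(M)}(M)$, show that $q$ must concentrate on some coset $aH$ of $H$, and then observe that $a\inv q$ is also generic (since $\Gen{\psi}{(M)}(M)$ is $G$-invariant under the left action) and concentrates on $a\inv(aH)=H$. The whole argument thus reduces to showing that every $\psi$-type over $M$ concentrates on some coset of $H$.

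For this I would realize $q$ in a larger model and use sufficient saturation of $M$. Write $H=\bigcap_{\theta\in\Pi}\theta\inv(0)$ for a partial type $\Pi$ of $\psi$-formulas over $M$, and fix a system of coset representatives $\{a_i\}_{i\in I}$ for $H$ in $G$, so that $|I|=[G:H]$ is bounded by the saturation of $M$. Let $N\succeq M$ be an elementary extension containing some realization $c\in G(N)$ of $q$, and set $H(N)=\bigcap_{\theta\in\Pi}\{x\in G(N):\theta(x)=0\}$. Assume toward a contradiction that $c\notin a_iH(N)$ for every $i\in I$. Then for each $i$ one can fix some $\theta_i\in\Pi$ and some $n_i\in\Z^+$ with $\theta_i(a_i\inv c)\geq 1/n_i$ (such $n_i$ exists because $\theta_i(a_i\inv c)$ is a strictly positive real). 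The partial type $\Sigma(x)=\{\theta_i(a_i\inv x)\geq 1/n_i:i\in I\}$ over $M$ has size $|I|$, and is consistent since $c$ realizes it, so by saturation of $M$ it is realized by some $c'\in G$. But $c'$ lies in some coset $a_{i_0}H$, forcing $\theta_{i_0}(a_{i_0}\inv c')=0$ and contradicting the $i_0$-instance of $\Sigma$. Hence $c\in a_iH(N)$ for some $i$, and so $q$ concentrates on $a_iH$, which yields the desired translate.

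The main thing requiring care is the coset-concentration step, since in continuous logic $\psi$-type-definable sets need not be clopen (as they would be if $H$ were $\psi$-definable with finite index), so one cannot just read off a coset from a type directly. Saturation of $M$ together with the quantitative strengthening of ``$\theta(a\inv c)>0$'' to ``$\theta(a\inv c)\geq 1/n$'' is what pushes the argument through, after which the rest is a clean adaptation of the classical proof.
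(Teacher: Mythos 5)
Your proof is correct and is the natural adaptation of the classical argument (the paper leaves this fact as an exercise, so there is no official proof to compare against). The two key ingredients — that a generic translated by an element of $G$ remains generic because the minimal subflow is $G$-invariant, and that saturation forces every $\psi$-type over $M$ to concentrate on some coset of $H$ — are exactly what the "routine adaptation" requires. Your handling of the second point is right: since $H$ is only $\psi$-type-definable (not clopen in $S_\psi(M)$), the coset decomposition does not a priori partition the type space, so one must rule out a type escaping all cosets. Strengthening $\theta_i(a_i^{-1}c)>0$ to $\theta_i(a_i^{-1}c)\geq 1/n_i$ so that the resulting conditions form a genuine partial type, and then realizing that type of size $[G:H]$ in $M$ by saturation, is precisely the continuous-logic version of the classical compactness step. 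One small cosmetic point: you should note that one may assume each $\theta\in\Pi$ is nonnegative (replacing $\theta$ by $|\theta|$), so that "$a_i^{-1}c\notin H(N)$" does yield a strictly positive value; this is implicit in your write-up but worth saying.
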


\begin{corollary}
$~$
\begin{enumerate}[$(a)$]
\item $\Stab(u)$ is the smallest $\varphi^\sharp$-type-definable bounded-index subgroup of $G$.
\item  $\Stab(u_\varphi)$ is the smallest $\varphi$-type-definable bounded-index subgroup of $G$.
\end{enumerate}
\end{corollary}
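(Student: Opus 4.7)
The plan is to follow the same template in both parts: use Fact \ref{fact:extend-generic} to produce a generic type concentrating on the given subgroup $H$, and observe that any stabilizer of such a generic lies inside $H$. The basic mechanism is the standard one: if $p$ concentrates on $H$ and $a$ realizes $p$ (which exists in $G$ by saturation), then $a \in H$, and for any $k\in\Stab(p)$ the element $ka$ realizes $kp=p$ as well, so $ka\in H$ and hence $k=(ka)a\inv\in H$. The remaining issue is to pass from $\Stab(p)\seq H$ to the stabilizer of the distinguished identity type.

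For part $(a)$, this passage is immediate. Fact \ref{fact:extend-generic} applies to $\varphi^\sharp$ (Theorem \ref{thm:summary}$(b)$ provides the unique minimal subflow), yielding $p\in \Gen{\varphi^\sharp}{(M)}$ with $p\models\pi_H$, and by Corollary \ref{cor:transitive-sharp} we can write $p=gu$, so $\Stab(p)=g\Stab(u)g\inv$. Since $\Stab(u)$ is normal in $G$ by Proposition \ref{prop:generics}$(a)$, we get $\Stab(p)=\Stab(u)$, and therefore $\Stab(u)\seq H$.

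For part $(b)$ the direct transitivity approach stalls: writing $p=gu_\varphi$ gives only $g\Stab(u_\varphi)g\inv\seq H$, and $\Stab(u_\varphi)$ need not be normal. The workaround is to show that $u_\varphi$ itself concentrates on $H$, so we can take $g=1$. Since every $\varphi$-formula is a $\varphi^\sharp$-formula, $H$ is also a $\varphi^\sharp$-type-definable bounded-index subgroup, and part $(a)$ gives $\Stab(u)\seq H$. By Proposition \ref{prop:generics}$(c)$, $u\models\Stab(u)$, so semantically $u\models H$ as well. Because the partial type $\pi_H$ defining $H$ consists only of $\varphi$-formulas, this restricts: $\pi_H\seq u|_\varphi=u_\varphi$, so $u_\varphi\models H$. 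Now we apply the standard mechanism directly to $u_\varphi$: by saturation pick $a\in G$ realizing $u_\varphi$; then $a\in H$, and for any $k\in \Stab(u_\varphi)$ the element $ka$ realizes $ku_\varphi=u_\varphi$, hence $ka\in H$ and $k\in H$.

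The main obstacle is the non-normality of $\Stab(u_\varphi)$, which blocks the clean proof used in $(a)$. What breaks the deadlock is the asymmetry between the two fragments: part $(a)$ can be applied to every $\varphi$-type-definable bounded-index subgroup (viewed as $\varphi^\sharp$-type-definable), and this, combined with Proposition \ref{prop:generics}$(c)$ and the fact that restriction of a $\varphi^\sharp$-type to $\varphi$ preserves satisfaction of $\varphi$-formulas, forces $u_\varphi$ itself to concentrate on every $\varphi$-type-definable bounded-index subgroup of $G$.
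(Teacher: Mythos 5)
Your proposal follows the same two‑part scheme as the paper: prove $(a)$ directly from a generic $\varphi^\sharp$-type concentrating on $H$, then deduce $(b)$ from $(a)$ by observing that a $\varphi$-type-definable $H$ is also $\varphi^\sharp$-type-definable, so $\Stab(u)\seq H$ forces $u\models H$ and hence $u_\varphi\models H$, after which the containment argument applies to $u_\varphi$. That pivot is exactly the paper's key idea, and the overall logic is right.

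However, your ``standard mechanism'' rests on a step that is not available as stated: you realize a type $p\in S_{\varphi^\sharp}(M)$ (and later $u_\varphi\in S_\varphi(M)$) by an element $a\in G=G(M)$, ``by saturation.'' Saturation of $M$ does not give realizations in $M$ of types over $M$ itself; and in fact a generic type realized in $G(M)$ would put the entire dense orbit of realized types inside the minimal subflow, collapsing $S_{\varphi^\sharp}(M)$ to a single orbit — which does not happen in general. So as written the inference $a\in H$ has no $a$ to apply it to. The repair is routine and does not change the structure of your proof. Either realize $p$ in an elementary extension $N\succ M$, note that $ka\in H(N)$ and $a\in H(N)$ give $k\in H(N)$, and use that $H$ is $M$-type-definable together with $M\preceq N$ to conclude $k\in H(M)=H$; or, as the paper does, avoid realizations entirely by arguing at the level of cosets: if $k\in\Stab(p)$ then $p=kp$ concentrates on both $H$ and $kH$, and since distinct cosets are type-inconsistent, $H=kH$, i.e.\ $k\in H$. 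With either repair your argument matches the paper's. (One small stylistic difference in $(a)$: you route through normality to replace $\Stab(p)=g\Stab(u)g^{-1}$ by $\Stab(u)$, while the paper instead uses the commutation $gu=ug$ from Corollary \ref{cor:EN}$(d)$ to show directly that $a\in\Stab(u)$ gives $agu=gu$; both are fine.)
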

\begin{proof}
Part $(a)$. Let $H$ be a $\varphi^\sharp$-type-definable bounded-index subgroup of $G$. By Corollary \ref{cor:transitive-sharp} and Fact \ref{fact:extend-generic} there is some $g\in G$ such that $gu\models H$. Now if $a\in \Stab(u)$ then $agu=aug=ug=gu$, so $agu\models H$. Since $agu\models aH$, we conclude $H=aH$, i.e., $a\in H$. 

Part $(b)$. Let $H$ be a $\varphi$-type-definable bounded-index subgroup of $G$. Then $\Stab(u)\seq H$ by part $(a)$, so $u\models H$ by Proposition \ref{prop:generics}$(c)$. Since $H$ is $\varphi$-type-definable, we have $u_\varphi\models H$. So if $g\in \Stab(u_\varphi)$, then $u_\varphi$ also concentrates on $gH$, hence $H=gH$, i.e., $g\in H$. 
\end{proof}

The previous corollary justifies writing $G^{00}_{\varphi^\sharp}$ and $G^{00}_{\varphi}$ for $\Stab(u)$ and $\Stab(u_\varphi)$, respectively. Combining the above results, we now have proved parts $(a)$, $(b)$, and $(c)$ of Theorem \ref{thm:generics}, which leaves parts $(d)$, $(e)$, and $(f)$.  

For part $(d)$, fix some $a\in G$.  Then $au\models aG^{00}_{\varphi^\sharp}$  by Proposition \ref{prop:generics}$(c)$. Conversely, suppose $p\in\Gen{\varphi^\sharp}{(M)}$ concentrates on $aG^{00}_{\varphi^\sharp}$. Fix $g\in G$ such that $p=gu$. Then  $p\models gG^{00}_{\varphi^\sharp}$, thus $aG^{00}_{\varphi^\sharp}=gG^{00}_{\varphi^\sharp}$, i.e., $au=gu=p$. By a similar argument, $au_\varphi$ is the unique type in $\Gen{\varphi}{(M)}$ concentrating on $aG^{00}_{\varphi}$. 

For part $(e)$, suppose $g\not\in G^{00}_{\varphi^\sharp}$. Then $gu\neq u$, so there is some instance $\varphi^\sharp(x;a,b)$ such that $\varphi^\sharp(u;a,b)\neq \varphi^\sharp(gu;a,b)$. Since $ua=au$, it  follows that $\varphi(au_\varphi,b)\neq \varphi(gau_\varphi,b)$. So $g\not\in \Stab(au_\varphi)=aG^{00}_{\varphi}a\inv$.

Part $(f)$ now follows immediately from the previous statements, which finishes the proof of Theorem \ref{thm:generics}.

\begin{remark}\label{rem:discrete-value}
For the sake of completeness, we sketch how to recover finiteness of $\Gen{\varphi}{(M)}$ when  the topology on $S_\varphi(M)$ induced from the canonical local metric is discrete (see \cite[Definition 6.1]{BYU}). Let $\mu$ be the unique left-invariant Keisler $\varphi$-functional over $M$. Then, assuming discreteness of $S_\varphi(M)$,  we can write $\mu$ as a weighted sum of  Dirac measures, i.e., $\mu=\sum_{i\in I}\alpha_i p_i$ where $\alpha_i>0$ and $p_i\in S_\varphi(M)$. This can be proved as in case of classical  logic, but also follows immediately from \cite[Theorem 3.12]{CCP} which gives an analogous result for any local Keisler functional defined from a continuous stable formula. Now, by Proposition \ref{prop:domination} and the relationship between the Haar functionals on $\Gen{\varphi^\sharp}{(M)}$ and on $\Gen{\varphi}{(M)}$ (recall Theorem \ref{thm:summary}$(d)$), each $p_i$ above is in $\Gen{\varphi}{(M)}$ and is a point of positive Haar measure. But it is a standard exercise that any compact homogeneous space with such a point must be finite. 
\end{remark}


\begin{thebibliography}{10}

\bibitem{AGG}
M.~A. Alekseev, L.~Y. Glebski\u\i, and E.~I. Gordon, \emph{On approximations of
  groups, group actions and {H}opf algebras}, Zap. Nauchn. Sem. S.-Peterburg.
  Otdel. Mat. Inst. Steklov. (POMI) \textbf{256} (1999), no.~Teor. Predst. Din.
  Sist. Komb. i Algoritm. Metody. 3, 224--262, 268. \MR{1708567}

\bibitem{BBF}
M. Beiglb\"ock, V. Bergelson, and A. Fish, \emph{Sumset phenomenon in countable
  amenable groups}, Adv. Math. \textbf{223} (2010), no.~2, 416--432.
  \MR{2565535}
  
  \bibitem{BYtop}
I. Ben~Yaacov, \emph{Topometric spaces and perturbations of metric structures},
  Log. Anal. \textbf{1} (2008), no.~3-4, 235--272. \MR{2448260}

\bibitem{BYgroups}
\bysame, \emph{Stability and stable groups in continuous logic}, J.
  Symbolic Logic \textbf{75} (2010), no.~3, 1111--1136. \MR{2723787}

\bibitem{BYGro}
\bysame, \emph{Model theoretic stability and definability of types, after
  {A}. {G}rothendieck}, Bull. Symb. Log. \textbf{20} (2014), no.~4, 491--496.
  \MR{3294276}

\bibitem{BBHU}
I. Ben~Yaacov, A. Berenstein, C.~W. Henson, and A. Usvyatsov, \emph{Model
  theory for metric structures}, Model theory with applications to algebra and
  analysis. {V}ol. 2, London Math. Soc. Lecture Note Ser., vol. 350, Cambridge
  Univ. Press, Cambridge, 2008, pp.~315--427. \MR{2436146 (2009j:03061)}
  
 \bibitem{BYTs}
I. Ben~Yaacov and T. Tsankov, \emph{Weakly almost periodic functions,
  model-theoretic stability, and minimality of topological groups}, Trans.
  Amer. Math. Soc. \textbf{368} (2016), no.~11, 8267--8294. \MR{3546800}


\bibitem{BYU}
I. Ben~Yaacov and A. Usvyatsov, \emph{Continuous first order logic and local
  stability}, Trans. Amer. Math. Soc. \textbf{362} (2010), no.~10, 5213--5259.
  \MR{2657678}
  
  \bibitem{BFW}
V. Bergelson, H. Furstenberg, and B. Weiss, \emph{Piecewise-{B}ohr sets of
  integers and combinatorial number theory}, Topics in discrete mathematics,
  Algorithms Combin., vol.~26, Springer, Berlin, 2006, pp.~13--37. \MR{2249261}

\bibitem{Bog39}
N. Bogolio\`uboff, \emph{Sur quelques propri\'et\'es arithm\'etiques des
  presque-p\'eriodes}, Ann. Chaire Phys. Math. Kiev \textbf{4} (1939),
  185--205. \MR{0020164}

\bibitem{BGT}
E. Breuillard, B. Green, and T. Tao, \emph{The structure of approximate
  groups}, Publ. Math. Inst. Hautes \'Etudes Sci. \textbf{116} (2012),
  115--221. \MR{3090256}

\bibitem{CCP}
N. Chavarria, G. Conant, and A. Pillay, \emph{Continuous stable regularity}, J.
  Lond. Math. Soc. (2) \textbf{109} (2024), no.~1, Paper No. e12822, 36.
  \MR{4680211}
  
 \bibitem{CoBogo}
G. Conant, \emph{On finite sets of small tripling or small alternation in
  arbitrary groups}, Combin. Probab. Comput. \textbf{29} (2020), no.~6,
  807--829. \MR{4173133}
  
 
\bibitem{CoLSGT}
\bysame, \emph{Stability in a group}, Groups Geom. Dyn. \textbf{15} (2021),
  no.~4, 1297--1330. \MR{4349660}

\bibitem{CoQSAR}
\bysame, \emph{Quantitative structure of stable sets in arbitrary finite
  groups}, Proc. Amer. Math. Soc. \textbf{149} (2021), no.~9, 4015--4028.
  \MR{4291597}
  
 
\bibitem{CHP}
G. Conant, E. Hrushovski, and A. Pillay, \emph{Compactifications of
  pseudofinite and pseudo-amenable groups}, arXiv:2308.08440, 2023.
  
  \bibitem{CPpfNIP}
G. Conant and A. Pillay, \emph{Pseudofinite groups and {VC}-dimension}, J.
  Math. Log. \textbf{21} (2021), no.~2, Paper No. 2150009, 23. \MR{4290498}


\bibitem{CPT}
G. Conant, A. Pillay, and C. Terry, \emph{A group version of stable
  regularity}, Math. Proc. Cambridge Philos. Soc. \textbf{168} (2020), no.~2,
  405--413. \MR{4064112}

\bibitem{CPTNIP}
\bysame, \emph{Structure and regularity for subsets of groups with finite
  {VC}-dimension}, J. Eur. Math. Soc. (JEMS) \textbf{24} (2022), no.~2,
  583--621. \MR{4382479}
  
  \bibitem{CLS}
E. Croot, I. {\L}aba, and O. Sisask, \emph{Arithmetic progressions in sumsets
  and {$L^p$}-almost-periodicity}, Combin. Probab. Comput. \textbf{22} (2013),
  no.~3, 351--365. \MR{3053851}

  
 \bibitem{CrSi}
E. Croot and O. Sisask, \emph{A probabilistic technique for finding
  almost-periods of convolutions}, Geom. Funct. Anal. \textbf{20} (2010),
  no.~6, 1367--1396. \MR{2738997}


\bibitem{EllLTD}
R. Ellis, \emph{Lectures on topological dynamics}, W. A. Benjamin, Inc., New
  York, 1969. \MR{0267561}

\bibitem{EllNer}
R. Ellis and M. Nerurkar, \emph{Weakly almost periodic flows}, Trans. Amer.
  Math. Soc. \textbf{313} (1989), no.~1, 103--119. \MR{930084}
  
 \bibitem{Glas-surv}
E. Glasner, \emph{Enveloping semigroups in topological dynamics}, Topology
  Appl. \textbf{154} (2007), no.~11, 2344--2363. \MR{2328017}
  
  \bibitem{GowQRG}
W.~T. Gowers, \emph{Quasirandom groups}, Combin. Probab. Comput. \textbf{17}
  (2008), no.~3, 363--387. \MR{2410393}

  
 \bibitem{GreenSLAG}
B. Green, \emph{A {S}zemer\'edi-type regularity lemma in abelian groups, with
  applications}, Geom. Funct. Anal. \textbf{15} (2005), no.~2, 340--376.
  \MR{2153903}


\bibitem{GroWAP}
A. Grothendieck, \emph{Crit\`eres de compacit\'{e} dans les espaces
  fonctionnels g\'{e}n\'{e}raux}, Amer. J. Math. \textbf{74} (1952), 168--186.
  \MR{0047313}
  
 \bibitem{HofMo3}
K.~H. Hofmann and S.~A. Morris, \emph{The structure of compact groups}, De
  Gruyter Studies in Mathematics, vol.~25, De Gruyter, Berlin, 2013, A primer
  for the student---a handbook for the expert, Third edition, revised and
  augmented. \MR{3114697}

\bibitem{HruAG}
E. Hrushovski, \emph{Stable group theory and approximate subgroups}, J. Amer.
  Math. Soc. \textbf{25} (2012), no.~1, 189--243. \MR{2833482}
  
 \bibitem{HrKrP1}
E. Hrushovski, K. Krupi\'{n}ski, and A. Pillay, \emph{Amenability, connected
  components, and definable actions}, Selecta Math. (N.S.) \textbf{28} (2022),
  no.~1, Paper No. 16, 56. \MR{4350207}
  
  \bibitem{HrPiGLF}
E. Hrushovski and A. Pillay, \emph{Groups definable in local fields and
  pseudo-finite fields}, Israel J. Math. \textbf{85} (1994), no.~1-3, 203--262.
  \MR{1264346}

\bibitem{JinAB}
R. Jin, \emph{The sumset phenomenon}, Proc. Amer. Math. Soc. \textbf{130}
  (2002), no.~3, 855--861. \MR{1866042}

\bibitem{Kazh}
D. Kazhdan, \emph{On {$\varepsilon $}-representations}, Israel J. Math.
  \textbf{43} (1982), no.~4, 315--323. \MR{693352}
  
  \bibitem{Kowalski-book}
E. Kowalski, \emph{An introduction to the representation theory of groups},
  Graduate Studies in Mathematics, vol. 155, American Mathematical Society,
  Providence, RI, 2014. \MR{3236265}

\bibitem{KrMau}
J.-L. Krivine and B. Maurey, \emph{Espaces de {B}anach stables}, Israel J.
  Math. \textbf{39} (1981), no.~4, 273--295. \MR{636897}
  
\bibitem{Lov-surv}
S. Lovett, \emph{An exposition of sanders' quasi-polynomial freiman-ruzsa
  theorem}, Theory of Computing (2015), 1--14.
  
 \bibitem{MaSh}
M. Malliaris and S. Shelah, \emph{Regularity lemmas for stable graphs}, Trans.
  Amer. Math. Soc. \textbf{366} (2014), no.~3, 1551--1585. \MR{3145742}

\bibitem{MOS}
S. Montenegro, A. Onshuus, and P. Simon, \emph{Stabilizers, {${\rm NTP}_2$}
  groups with f-generics, and {PRC} fields}, J. Inst. Math. Jussieu \textbf{19}
  (2020), no.~3, 821--853. \MR{4094708}
  
  \bibitem{NewTD}
L. Newelski, \emph{Topological dynamics of definable group actions}, J.
  Symbolic Logic \textbf{74} (2009), no.~1, 50--72. \MR{2499420}
  
 \bibitem{NikPyDJ}
N. Nikolov and L. Pyber, \emph{Product decompositions of quasirandom groups and
  a {J}ordan type theorem}, J. Eur. Math. Soc. (JEMS) \textbf{13} (2011),
  no.~4, 1063--1077. \MR{2800484}
  
  \bibitem{PalPFS}
D. Palac\'{\i}n, \emph{On compactifications and product-free sets}, J. Lond.
  Math. Soc. (2) \textbf{101} (2020), no.~1, 156--174. \MR{4072489}
  
 \bibitem{Patbook}
A.~L.~T. Paterson, \emph{Amenability}, Mathematical Surveys and Monographs,
  vol.~29, American Mathematical Society, Providence, RI, 1988. \MR{961261}
  
\bibitem{PiGST}
A. Pillay, \emph{Geometric stability theory}, Oxford Logic Guides, vol.~32, The
  Clarendon Press, Oxford University Press, New York, 1996, Oxford Science
  Publications. \MR{1429864}


  
 \bibitem{RZbook}
L. Ribes and P. Zalesskii, \emph{Profinite groups}, second ed., Ergebnisse der
  Mathematik und ihrer Grenzgebiete. 3. Folge. A Series of Modern Surveys in
  Mathematics [Results in Mathematics and Related Areas. 3rd Series. A Series
  of Modern Surveys in Mathematics], vol.~40, Springer-Verlag, Berlin, 2010.
  \MR{2599132}
  
  \bibitem{Ruz94}
I.~Z. Ruzsa, \emph{Generalized arithmetical progressions and sumsets}, Acta
  Math. Hungar. \textbf{65} (1994), no.~4, 379--388. \MR{1281447}

\bibitem{SanBS}
T. Sanders, \emph{On a nonabelian {B}alog-{S}zemer\'edi-type lemma}, J. Aust.
  Math. Soc. \textbf{89} (2010), no.~1, 127--132. \MR{2727067}
  
  
  \bibitem{SanBR}
\bysame, \emph{On the {B}ogolyubov-{R}uzsa lemma}, Anal. PDE \textbf{5}
  (2012), no.~3, 627--655. \MR{2994508}
  
  
    \bibitem{SchSis}
T. Schoen and O. Sisask, \emph{Roth's theorem for four variables and additive
  structures in sums of sparse sets}, Forum Math. Sigma \textbf{4} (2016),
  Paper No. e5, 28. \MR{3482282}

\bibitem{Shbook}
S. Shelah, \emph{Classification theory and the number of nonisomorphic models},
  second ed., Studies in Logic and the Foundations of Mathematics, vol.~92,
  North-Holland Publishing Co., Amsterdam, 1990. \MR{1083551 (91k:03085)}


  
  \bibitem{SzemRL}
E. Szemer\'edi, \emph{Regular partitions of graphs}, Probl\`emes combinatoires
  et th\'eorie des graphes ({C}olloq. {I}nternat. {CNRS}, {U}niv. {O}rsay,
  {O}rsay, 1976), Colloq. Internat. CNRS, vol. 260, CNRS, Paris, 1978,
  pp.~399--401. \MR{540024}
  
\bibitem{TaoICM}
T. Tao, \emph{The dichotomy between structure and randomness}, presented at the
  2006 International Congress of Mathematicians, Madrid, available at:
  \url{https://www.math.ucla.edu/~tao/preprints/Slides/icmslides2.pdf}.

\bibitem{TeWo}
C. Terry and J. Wolf, \emph{Stable arithmetic regularity in the finite field
  model}, Bull. Lond. Math. Soc. \textbf{51} (2019), no.~1, 70--88.
  \MR{3919562}

\bibitem{TeWo2}
\bysame, \emph{Quantitative structure of stable sets in finite abelian groups},
  Trans. Amer. Math. Soc. \textbf{373} (2020), no.~6, 3885--3903. \MR{4105513}

\end{thebibliography}
\end{document}